\newtheorem{theorem}{Theorem}[section]
\newtheorem{proposition}[theorem]{Proposition}
\newtheorem{corollary}[theorem]{Corollary}
\newtheorem{lemma}[theorem]{Lemma}
\theoremstyle{definition}
\newtheorem{remark}{Remark}
\newtheorem{definition}{Definition}
\newcommand{\Map}{\mathrm{Map}}
\newcommand{\Homeo}{\mathrm{Homeo}}
\newcommand{\Ends}{\mathrm{E}}
\newcommand{\Int}{\mathrm{Int}}
\newcommand{\Ext}{\mathrm{Ext}}
\def\N{\mathbb{N}}
\begin{document}
\title[On the large scale geometry of big MCGs of surfaces with a unique maximal end]{On the large scale geometry of big mapping class groups of surfaces with a unique maximal end}


\author{Rita Jiménez Rolland and Israel Morales}
\date{\today}

\address{
Instituto de Matemáticas, UNAM, Unidad Oaxaca, Oaxaca de Juárez, Oaxaca, C.P. 68000. \newline
Oaxaca,  M\'exico.}
\email{rita@im.unam.mx}
 \email{imorales@im.unam.mx}
 \subjclass[2020]{Primary: 57K20. Secondary: 57M07}

\begin{abstract}
Building on the work of K. Mann and K. Rafi \cite{MR2019}, we analyze the large scale geometry of big mapping class groups of surfaces with a unique maximal end. We obtain a complete characterization of those that are globally CB, which does not require the \emph{tameness} condition. We prove that, for surfaces with a unique maximal end, any locally CB big mapping class group is CB generated, and we give an explicit criterion for determining which big mapping class groups are CB generated. Finally, we answer \cite[Problem 6.12]{MR2019} by giving an example of a non-tame surface whose mapping class group is CB generated but is not globally CB.

\end{abstract}

\maketitle


\section{Introduction}
\noindent Let $\Sigma$ be an infinite-type 
surface and $\Map(\Sigma)$ be the group of all isotopy classes of orientation-preserving self-homeomorphisms of $\Sigma$. This group is called the \emph{(big) mapping class group} of $\Sigma$. If we equip the homeomorphism group with the compact-open topology then $\Map(\Sigma)$ is a Polish group 
with respect to the quotient topology. For an overview about big mapping class groups the reader may consult \cite{AV-Overview}.   

Recently, C. Rosendal \cite{Rosendal2021} 
used the notion of \emph{coarsely bounded} sets in order to  extend the framework of geometric group theory to the broader context of topological groups.

\begin{definition}
    Let $G$ be a topological group. A subset of $G$ is \emph{CB (coarsely bounded)} if it has finite diameter for every compatible left-invariant metric on $G$. The group $G$ is \emph{locally CB} if it admits a CB neighborhood of the identity. If $G$ is generated by a CB subset we say that $G$ is \emph{CB generated}.
\end{definition}

With this language, Rosendal extends Gromov's fundamental theorem of geometric group theory showing that if $A_1$ and $A_2$ are two CB generating sets of a Polish group $G$ then the respective word metrics on $G$ are quasi-isometric \cite[Proposition 2.72]{Rosendal2021}. In other words, a CB generated Polish group has a well-defined quasi-isometric type. Moreover, it was proved 
that being locally CB is a necessary condition for a Polish group to be CB generated \cite[Theorem 1.2]{Rosendal2021}. 

Based on  Rosendal's framework, K. Mann and K. Rafi started in \cite{MR2019} the study of the large scale geometry of big mapping class groups. They obtained a classification of  locally CB big mapping class groups \cite[Theorem 1.4]{MR2019} and, under the assumption of \emph{tameness}, the classification of  CB generated big mapping class groups \cite[Theorem 1.6]{MR2019}.


Studying  the mapping class group of an infinite-type surface $\Sigma$ can be complicated since it involves understanding the homeomorphism group of the space of ends of $\Sigma$, which is denoted by $\Ends(\Sigma)$. In order to address this, Mann and Rafi introduced a preorder on the space of ends of a surface and showed that the induced partial order always has maximal elements.  We say that a open subset $U$ of an infinite-type surface $\Sigma$ is a \emph{neighborhood} of an end $x \in \Ends(\Sigma)$ if its end space contains $x$, that is, $x\in \Ends(U)$. A surface $\Sigma$ has \emph{a unique  maximal end} if there is only one end $x$ of $\Sigma$ such that any neighborhood of $x$ has a copy of some neighborhood of any other end $y$ (see Section \ref{SECTION:Preliminaries} for a precise definition).

In this paper we focus our study on the large scale geometry of big mapping class groups of surfaces with a unique maximal end. These surfaces  have mapping class groups that have exhibited a different behavior from the rest of big mapping class groups. For instance, $\Map(\Sigma)$ has a dense conjugacy class if and only if every finite-type subsurface of $\Sigma$ is displaceable and has a unique maximal end, see \cite{HHetAl2022,LanierVlamis2022}. In addition, some examples of big mapping class groups that do not satisfy the \emph{automatic continuity} property are in this class \cite{MalestinTao2021,Mann2020AC}.

We start our study by giving an alternative characterization of locally CB big mapping class groups for surfaces with a unique maximal end.

\begin{theorem}\label{TEO:BigMapsLocallyCB1maxEnd}
    Let $\Sigma$ be an infinite-type surface with a unique maximal end $x$. Then $\Map(\Sigma)$ is locally CB if and only if there is a connected finite-type subsurface $K$ of $\Sigma$ with the following properties: 

    \begin{enumerate}
        \item $\Sigma\setminus K=\Sigma_0\sqcup \Sigma_1\sqcup\cdots \sqcup \Sigma_n$, where the closure of  each $\Sigma_i$ in $\Sigma$ is of infinite-type with one boundary component, $g(\Sigma_i)\in \{0,\infty\}$ and  $x\in E(\Sigma_0)$.
        \item For any subsurface $U\subseteq \Sigma_0$ that is a neighborhood of $x$ there is $f_U\in Homeo^+(\Sigma)$ such that either $f_U(\Sigma_0)\subseteq U$ or $f_U(\Sigma\setminus U)\subseteq U$.
    \end{enumerate}

    Moreover, if $\Map(\Sigma)$ is locally CB then $\mathcal{V}_K:=\{f\in \Map(\Sigma):\, f\vert_K=Id_K\}$ is a CB neighborhood of the identity.
\end{theorem}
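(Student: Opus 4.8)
The plan is to show that $\mathcal{V}_K$ is a CB \emph{subset} of $\Map(\Sigma)$; it is automatically a neighborhood of the identity, since it is the pointwise stabilizer of the compact subsurface $K$ and hence an open subgroup, and the subgroups $\mathcal{V}_L$, with $L$ ranging over connected finite-type subsurfaces containing $K$, form a neighborhood basis at the identity. First I would exploit the hypothesis that $\Map(\Sigma)$ is locally CB: by definition there is some CB neighborhood of the identity, which therefore contains some $\mathcal{V}_{L_0}$, and since a subset of a CB set is CB we may enlarge $L_0$ to a connected finite-type subsurface $L\supseteq K$ with $\mathcal{V}_L$ still CB. Because finite sets are CB and a product of boundedly many CB sets is CB (by left-invariance of the metrics), it then suffices to produce a finite set $F\subseteq \Map(\Sigma)$ and an integer $k$ with $\mathcal{V}_K\subseteq (F\,\mathcal{V}_L)^k$; this is the form of Rosendal's coarse boundedness criterion \cite{Rosendal2021} that I would target.

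To establish this containment I would argue by \emph{fragmentation and absorption}. Any $g\in\mathcal{V}_K$ fixes $K$ pointwise, so $\supp(g)\subseteq \Sigma\setminus K=\Sigma_0\sqcup\cdots\sqcup\Sigma_n$; since $g$ fixes each boundary curve of $K$ pointwise it preserves each complementary piece (each has a single boundary circle by condition $(1)$), and thus factors as a commuting product $g=g_0g_1\cdots g_n$ with $\supp(g_i)\subseteq\Sigma_i$. This reduces the problem to conjugating each $g_i$ into $\mathcal{V}_L$. I would then choose a neighborhood $U\subseteq\Sigma_0$ of the maximal end $x$ that is disjoint from $L$, and invoke condition $(2)$ to obtain $f_U\in\Homeo^+(\Sigma)$ with either $f_U(\Sigma_0)\subseteq U$ or $f_U(\Sigma\setminus U)\subseteq U$. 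In the absorbing case the single homeomorphism $f_U$ pushes every piece $\Sigma_i$ with $i\ge 1$, and the relevant part of $\Sigma_0$, into the neighborhood $U$ of $x$, which lies in the complement of $L$; conjugating the $g_i$ by $f_U$ together with finitely many fixed auxiliary homeomorphisms (a bounded change of coordinates carrying the support regions toward $x$, and, when $g(\Sigma_i)=\infty$, a fixed finite collection of handle-type moves used to absorb the genus) then lands each conjugate in $\mathcal{V}_L$. Collecting these conjugators into a finite set $F$ depending only on $L$, one obtains $g\in (F\,\mathcal{V}_L)^k$ with $k$ bounded, as required.

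The main obstacle is precisely this absorption step: converting the purely qualitative self-similarity of condition $(2)$, and the uniqueness of the maximal end, into a \emph{uniform} bounded-length conjugation that works simultaneously for all complementary pieces. The delicate points are that the non-maximal pieces $\Sigma_i$ with $i\ge 1$ carry ends that are only dominated by $x$ rather than self-similar in isolation, so their supports must be transported into a neighborhood of $x$ by globally defined maps rather than by intrinsic self-maps of $\Sigma_i$; that the infinite-genus pieces force one to absorb genus as well as ends within a bounded word; and that the conjugating set $F$ must depend only on $L$ and not on $g$. The two alternatives of condition $(2)$ should be treated separately, with the absorbing alternative $f_U(\Sigma\setminus U)\subseteq U$ doing the heavy lifting of swallowing all the lower pieces into a neighborhood of $x$, while the self-similar alternative $f_U(\Sigma_0)\subseteq U$ is used to shrink the support within $\Sigma_0$ itself.
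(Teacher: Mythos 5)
There are genuine gaps. The most serious is that your argument is circular as a proof of the sufficiency direction (``there exists $K$ satisfying (1) and (2) $\Rightarrow$ $\Map(\Sigma)$ is locally CB''): your very first step is to ``exploit the hypothesis that $\Map(\Sigma)$ is locally CB'' in order to obtain a coarsely bounded subgroup $\mathcal{V}_L$, i.e.\ you assume exactly what must be proved, and your conclusion that $\mathcal{V}_K$ is CB rests on $\mathcal{V}_L$ already being CB (a containment $\mathcal{V}_K\subseteq (F\mathcal{V}_L)^k$ yields nothing otherwise). The correct framework, and the one the paper uses, is Rosendal's criterion in the form of Theorem \ref{TEO:Equivalence_CB}: for \emph{every} identity neighborhood $V$, hence for every $\mathcal{V}_T$ with $T\supseteq K$ of finite type and with \emph{no} boundedness assumption on $\mathcal{V}_T$, one exhibits a finite set $F$ and a uniform exponent with $\mathcal{V}_K\subseteq (F\mathcal{V}_T)^{4n+2}$. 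Your fragmentation-and-conjugation scheme never actually uses CB-ness of $\mathcal{V}_L$, so the framework can be repaired; but as written your logic proves at most the ``moreover'' clause, where local CB-ness is a hypothesis. In addition, you do not address the necessity direction at all: there one must \emph{construct} $K$ and \emph{verify} property (2), which the paper does by displacing a pair of pants $P$ with $\partial U\sqcup\partial\Sigma_0\subseteq\partial P$ (via Lemma \ref{LEMMA:CRITERIONOTCB}: CB-ness of $\mathcal{V}_K$ forces every finite-type subsurface of $\Sigma\setminus K$ to be displaceable) and then deducing from $f(P)\cap P=\emptyset$ the dichotomy $f_U(\Sigma_0)\subseteq U$ or $f_U(\Sigma\setminus U)\subseteq U$.

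Second, the step you yourself flag as ``the main obstacle'' --- transporting each $\Sigma_i$, $i\geq 1$, into a neighborhood of $x$ by a finite, $g$-independent set of maps --- is where the real content lies, and your sketch does not supply it. Condition (2) gives, for each individual $U$, only \emph{one} of the two alternatives, and different $U$'s may realize different ones, so the alternatives cannot simply be ``treated separately''; the paper runs a nested case analysis instead. If every $U\subseteq U_T$ realizes the swallowing alternative $f_U(\Sigma\setminus U)\subseteq U$, then by Lemma \ref{LEMMA:SufficientToBeGloballyCB} the group $\Map(\Sigma)$ is globally CB and nothing remains to prove. Otherwise some $U$ satisfies $f_U(\Sigma_0)\subseteq U$, and one repeats the dichotomy \emph{inside} $U$: either some $\widetilde U\subseteq U$ swallows its complement, in which case $f_0:=f_U$, $f_i:=f_{\widetilde U}$ feed the conjugation bound of Lemma \ref{LEMMA:V_KisCB}; or every $\widetilde U\subseteq U$ satisfies $f_{\widetilde U}(\Sigma_0)\subseteq\widetilde U$, and only in that branch does one obtain copies of the $\Sigma_i$ inside $U$ --- not by ad hoc ``handle-type moves'', but by Lemma \ref{LEMMA:StableNbhdImpliesCopiesOfComplementsInside}, whose proof combines compactness of $\Ends(\Sigma_i)$, uniqueness of the maximal end, and the stability property of $U$ to embed $\Ends(\Sigma_i)$ into $\Ends(U)$, and then uses $g(\Sigma_i)\in\{0,\infty\}$ to realize a homeomorphic copy $\Sigma_i'\subseteq U$ with the displacing homeomorphism supported on a pair of pants together with $\Sigma_i\cup\Sigma_i'$. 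Without this case structure, your finite set $F$ and uniform exponent $k$ are never actually constructed, so the bound you assert has no justification.
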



\begin{remark}
Notice that  our condition (1) in Theorem \ref{TEO:BigMapsLocallyCB1maxEnd} is the same as item (1) in \cite[Theorem 1.4]{MR2019}. The differences between the statements lie in item (2). Briefly, for surfaces with a unique maximal end, item (2) in \cite[Theorem 1.4]{MR2019} consists of the following: \emph{i)} $\Ends(\Sigma_0)$ is self-similar, \emph{ii)} each complementary component of $K$ different from $\Sigma_0$ is mapped inside $\Sigma_0$ by a homeomorphism and, \emph{iii)} if $K$ is not empty, for each neighborhood $U\subseteq \Sigma_0$ of the unique maximal end there is a homeomorphism $f$ such that $f(\Sigma_0)\subset U$.  In our result, we do not require conditions \emph{i)} and \emph{ii)}.
Instead, we consider condition \emph{iii)} and include the possibility that for some neighborhoods $U\subseteq \Sigma_0$ of the unique maximal end  there exists a homeomorphism $f_U$ such that $f_U(\Sigma\setminus U)\subseteq U$. This last consideration does not appear in the statement of item (2) of \cite[Theorem 1.4]{MR2019}. We prove 
that this possibility can only appear if $\Map(\Sigma)$ is globally CB and the subsurface $K$ of $\Sigma$ can be taken to be empty; see Lemma \ref{LEMMA:SufficientToBeGloballyCB}. 
\end{remark}




We use Theorem \ref{TEO:BigMapsLocallyCB1maxEnd} to give an alternative proof of the self-similarity (see Definition \ref{DEF:SelfsimilarEndsSpace}) of the space of ends of an infinite-type surface with a unique maximal end whose mapping class group is locally CB; see  also \cite[Proposition 5.4]{MR2019}.

\begin{theorem}\label{TEO:OneMaximalEndAndLocallyCBimpliesSpaceEndsSelfSimilar}
    Let $\Sigma$ be an infinite-type surface with a unique maximal end and suppose $\Map(\Sigma)$ is locally CB. Then, the space of ends $\Ends(\Sigma)$ is self-similar.
\end{theorem}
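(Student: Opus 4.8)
The plan is to derive self-similarity of $\Ends(\Sigma)$ from the structural description of $\Sigma$ provided by Theorem \ref{TEO:BigMapsLocallyCB1maxEnd}. First I would recall the definition of self-similarity: a space of ends $\Ends(\Sigma)$ is self-similar if for every partition $\Ends(\Sigma) = P_1 \sqcup \cdots \sqcup P_k$ into clopen sets, at least one $P_i$ contains a clopen neighborhood homeomorphic to $\Ends(\Sigma)$ itself (with the homeomorphism also respecting the subset of ends accumulated by genus). Since $\Map(\Sigma)$ is locally CB, Theorem \ref{TEO:BigMapsLocallyCB1maxEnd} gives a connected finite-type subsurface $K$ with $\Sigma \setminus K = \Sigma_0 \sqcup \Sigma_1 \sqcup \cdots \sqcup \Sigma_n$, where $x \in \Ends(\Sigma_0)$ and condition (2) holds. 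The unique maximal end $x$ plays the central role: because $x$ is maximal, every end $y$ has a neighborhood a copy of which sits inside every neighborhood of $x$, so the self-similar behavior should be concentrated around $x$ and witnessed by $\Ends(\Sigma_0)$.

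The key steps, in order, would be as follows. I would fix an arbitrary clopen partition of $\Ends(\Sigma)$ and locate the piece $P_i$ containing the maximal end $x$. Using that clopen subsets of $\Ends(\Sigma)$ correspond to subsurfaces, I would realize $P_i$ as $\Ends(U)$ for some subsurface $U$, and by shrinking, arrange that $U \subseteq \Sigma_0$ is a neighborhood of $x$ (this uses that $\Sigma_0$ is a neighborhood of $x$ and that we may take $U$ to lie inside $\Sigma_0$ by passing to a smaller clopen neighborhood of $x$). Now condition (2) of Theorem \ref{TEO:BigMapsLocallyCB1maxEnd} provides $f_U \in \Homeo^+(\Sigma)$ with either $f_U(\Sigma_0) \subseteq U$ or $f_U(\Sigma \setminus U) \subseteq U$. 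In the first case, $f_U$ restricts to a homeomorphism of end spaces embedding $\Ends(\Sigma_0)$ into $\Ends(U) = P_i$; in the second case, it embeds $\Ends(\Sigma \setminus U)$, hence all the ends outside $U$, into $P_i$. Either way I obtain a self-embedding whose image sits inside the single partition piece containing $x$.

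The main obstacle, and the heart of the argument, is to upgrade this embedding into $P_i$ of a large piece of $\Ends(\Sigma)$ into an honest copy of all of $\Ends(\Sigma)$ inside $P_i$. The point is that $\Ends(\Sigma)$ is itself recovered, up to homeomorphism respecting the genus-accumulation locus, from $\Ends(\Sigma_0)$ together with the maximal end: because $x$ is the unique maximal end and every other end is dominated by it, a neighborhood of $x$ inside $\Sigma_0$ already contains homeomorphic copies of neighborhoods of every end type occurring in $\Sigma$, and $\Ends(\Sigma_0)$ carries the full complexity of $\Ends(\Sigma)$. I would make this precise by showing that $\Ends(\Sigma_0)$ (or $\Ends(U)$) contains a clopen neighborhood of $x$ homeomorphic to $\Ends(\Sigma)$, combining the domination property of the maximal end with the genus condition $g(\Sigma_i) \in \{0, \infty\}$ from condition (1), which guarantees that the genus-accumulation structure is also reproduced inside $\Sigma_0$. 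Care is needed here to check the homeomorphism respects both the topology of the end space and the distinguished subset of ends accumulated by genus, since self-similarity in this setting is a statement about the pair $(\Ends(\Sigma), \Ends_g(\Sigma))$. Assembling these observations shows that for every clopen partition, the piece containing $x$ contains a clopen copy of $\Ends(\Sigma)$, which is exactly self-similarity.
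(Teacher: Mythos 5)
Your target is the same as the paper's: show that the partition piece containing the maximal end $x$ contains a clopen copy of $\Ends(\Sigma)$, found inside $\Ends(U)$ for a suitable neighborhood $U\subseteq \Sigma_0$ of $x$. But the proposal has a genuine gap, in two places. First, you apply condition (2) of Theorem \ref{TEO:BigMapsLocallyCB1maxEnd} only once, and you have no way to handle its second alternative. If the homeomorphism satisfies $f_U(\Sigma\setminus U)\subseteq U$, what you get is an embedding of $\Ends(\Sigma\setminus U)$ into $\Ends(U)$; since $\Ends(\Sigma)=\Ends(\Sigma\setminus U)\sqcup \Ends(U)$, this is strictly less than a copy of $\Ends(\Sigma)$, and there is no Schr\"oder--Bernstein-type device for clopen embeddings that upgrades it. The paper avoids this dead end by a dichotomy on displaceability that your sketch never invokes: if $\Sigma$ has no nondisplaceable finite-type subsurface, then self-similarity is precisely Mann--Rafi's Lemma \ref{LEMMA:UniqueMaximalEndSelsimilarity} and one is done immediately; if $\Sigma$ has a nondisplaceable finite-type subsurface $S$, then choosing $U\subseteq\Sigma_0$ disjoint from $S$ rules out the second alternative for \emph{every} sub-neighborhood $\widetilde U\subseteq U$ (a homeomorphism with $f(\Sigma\setminus\widetilde U)\subseteq \widetilde U$ would displace $S$), forcing the first alternative throughout.

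Second, even in the first alternative, a single application of condition (2) only plants a copy of $\Ends(\Sigma_0)$ inside $\Ends(U)$, whereas $\Ends(\Sigma)=\Ends(\Sigma_0)\sqcup\Ends(\Sigma_1)\sqcup\cdots\sqcup\Ends(\Sigma_n)\sqcup P$, with $P$ the finite set of punctures of $K$. Your justification for recovering the remaining pieces --- that maximality of $x$ puts copies of a neighborhood of every end near $x$ --- is too weak: pointwise domination gives, for each end separately, a copy of \emph{some} neighborhood of it near $x$, but it does not assemble these into pairwise disjoint clopen copies of the finitely many complementary pieces whose union is clopen, homeomorphic to $\Ends(\Sigma)$, and compatible with the genus-accumulating ends. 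What makes the assembly work in the paper is exactly the stability of $U$ obtained from the dichotomy above (the first alternative holding for every $\widetilde U\subseteq U$): this hypothesis feeds Lemma \ref{LEMMA:StableNbhdImpliesCopiesOfComplementsInside} to embed each $\Sigma_i$, $1\leq i\leq n$, into $U$, and is then used again, repeatedly, to shrink neighborhoods and make the copies $\Sigma_0',\ldots,\Sigma_n'$ and $P'$ pairwise disjoint. Your paragraph labels this step ``the heart of the argument'' but then only asserts its conclusion (``I would make this precise by showing that $\Ends(U)$ contains a clopen neighborhood of $x$ homeomorphic to $\Ends(\Sigma)$''), so the proof is missing precisely where you locate its difficulty.
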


The following notions where introduced in \cite{MR2019} to have some control on the topology of the space of ends  $E(\Sigma)$.

\begin{definition}\label{DEF:StableNeigbTameness}
A neighborhood $U$ of an end $x$ is \emph{stable} \cite[Definition 4.14]{MR2019} if for every neighborhood $U^\prime \subseteq U$ of $x$ there is a homeomorphic copy of $U$ inside $U^\prime$.
\end{definition}

\begin{definition}\label{DEF:TameSurface}
    We say that a surface $\Sigma$ is \emph{tame} if every end of $\Sigma$ which is either of maximal type or any immediate predecessor of an end of maximal type has a stable neighborhood.
\end{definition}

\begin{remark}
This notion was originally introduced in this form by Mann and Rafi, and it has been used in the literature; see for instance \cite[Section 5]{OQR2022}, \cite[Section 5.1]{FGM21}, \cite[Definition 2.15]{SC22}. However, it differs from how it is stated in \cite[Definition 6.11]{MR2019}.  In Section \ref{LastSe} we show that these two definitions are equivalent, when the surface $\Sigma$ has a unique maximal end and $\Map(\Sigma)$ is locally CB  but not globally CB.
\end{remark} 


In \cite{MR2019}, Mann and Rafi give a characterization of globally CB big mapping class groups under the hypothesis of tameness \cite[Theorem 1.7]{MR2019}. We prove that in the case of surfaces with a unique maximal end, the tameness hypothesis is not needed.

\begin{theorem}\label{TEO::ClassificationLocCBMAPsWithOneMaximalEnd}
    Let $\Sigma$ be an infinite-type surface with a unique maximal end and suppose that $\Map(\Sigma)$ is locally CB. Then $\Map(\Sigma)$ is globally CB if and only if the genus of $\Sigma$ is zero or infinite.
\end{theorem}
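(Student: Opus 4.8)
The plan is to prove both directions of the biconditional, using Theorem~\ref{TEO:BigMapsLocallyCB1maxEnd} as the main structural input. We are assuming $\Map(\Sigma)$ is locally CB, so we have a finite-type subsurface $K$ with the decomposition $\Sigma \setminus K = \Sigma_0 \sqcup \cdots \sqcup \Sigma_n$, each $\Sigma_i$ of infinite type with one boundary component, $g(\Sigma_i) \in \{0,\infty\}$, and $x \in E(\Sigma_0)$. The genus of $\Sigma$ is the (possibly infinite) sum of the genera of $K$ and all the $\Sigma_i$. So "genus of $\Sigma$ is zero or infinite" should translate into statements about whether the genus is concentrated and how it interacts with self-similarity.

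**For the forward direction** (globally CB $\Rightarrow$ genus $\in \{0,\infty\}$), I would argue the contrapositive: if the genus is finite and positive, I would exhibit an unbounded length function (equivalently a quasimorphism-like or filtration obstruction) showing $\Map(\Sigma)$ cannot be globally CB. The idea is that a finite positive amount of genus can be "pushed out towards the maximal end $x$" infinitely often, and counting how far genus has been displaced gives an unbounded conjugation-invariant or orbit-based quantity. Concretely, since the total genus is finite and nonzero, no single handle-shifting homeomorphism can be absorbed into a CB set; one should be able to define a word-length-type function that detects the net transport of the finitely many handles and show it is unbounded on $\Map(\Sigma)$, contradicting global CB-ness. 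This uses that global CB is equivalent to the action on the word metric having finite diameter.

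**For the reverse direction** (genus $\in\{0,\infty\}$ $\Rightarrow$ globally CB), I would combine Theorem~\ref{TEO:OneMaximalEndAndLocallyCBimpliesSpaceEndsSelfSimilar}, which gives that $\Ends(\Sigma)$ is self-similar, with the structural control on genus. The key point is that with genus zero or infinite, the self-similarity of the end space promotes to a genuine \emph{topological} self-similarity of $\Sigma$ itself: a neighborhood of the maximal end $x$ is homeomorphic to all of $\Sigma$, because there is no finite genus obstruction to embedding $\Sigma$ into such a neighborhood. Using condition (2) of Theorem~\ref{TEO:BigMapsLocallyCB1maxEnd}, together with self-similarity, I expect to show that the CB neighborhood $\mathcal{V}_K$ already generates in a bounded way — i.e., one can displace $K$ into an arbitrarily small neighborhood of $x$ via a homeomorphism lying in a bounded set, and then a standard "fragmentation + absorption" argument (as in Mann--Rafi) shows every mapping class lies in a bounded power of $\mathcal{V}_K$. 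This reduces global CB-ness to showing $\mathcal{V}_K$ together with a displacing element forms a CB generating set whose word metric has finite diameter.

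**The main obstacle** I anticipate is the reverse direction, specifically promoting the self-similarity of the \emph{end space} to the self-similarity of the \emph{surface} when the genus is infinite: one must ensure that infinite genus can be matched up under homeomorphisms of neighborhoods of $x$ in a way compatible with the end-space homeomorphisms, and handle the interaction between the genus-zero complementary pieces $\Sigma_i$ and the genus-infinite ones. Removing the tameness hypothesis (which is the whole point of the theorem) means I cannot assume stable neighborhoods exist, so the absorption/displacement argument must be carried out using only condition (2) and self-similarity rather than a clean stable-neighborhood normal form; making this displacement argument bounded without tameness is where the real work lies.
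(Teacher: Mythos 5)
Your overall skeleton---a genus obstruction in the forward direction, self-similarity of the end space in the reverse---matches the paper's, but both places where you write ``one should be able to'' are exactly where the content lies, and in both cases that content is already available as results quoted in the paper's preliminaries, so your plan substitutes hard, unfinished constructions for one-line citations. In the forward direction you propose to build an unbounded length function measuring ``net transport of handles''; making such a function well defined and provably unbounded is essentially a reproof of Theorem \ref{TEO:NonDisplaceableImpliesNotGloballyCB} (Mann--Rafi's Theorem 1.9), and you give no construction. The concrete step you are missing is much simpler: if $0<g(\Sigma)<\infty$, then any finite-type subsurface $S$ containing all the genus of $\Sigma$ is nondisplaceable, since a homeomorphism $f$ with $f(S)\cap S=\emptyset$ would yield two disjoint subsurfaces each of genus $g(\Sigma)$, forcing $g(\Sigma)\geq 2g(\Sigma)$, a contradiction. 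With that observation, Theorem \ref{TEO:NonDisplaceableImpliesNotGloballyCB} finishes this direction immediately; no length function needs to be constructed, and note that this direction uses neither local CB-ness nor Theorem \ref{TEO:BigMapsLocallyCB1maxEnd}.

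In the reverse direction, the ``main obstacle'' you identify---promoting self-similarity of $\Ends(\Sigma)$ to a homeomorphism statement about the surface when the genus is infinite, and running an absorption argument without tameness---is precisely the content of Proposition \ref{PROP:SelfsimilarityImpliesCB} (Mann--Rafi's Proposition 3.1), which applies to any surface of zero or infinite genus with self-similar end space and has no tameness hypothesis. The paper's proof of this direction is therefore two citations: Theorem \ref{TEO:OneMaximalEndAndLocallyCBimpliesSpaceEndsSelfSimilar} gives that $\Ends(\Sigma)$ is self-similar, and Proposition \ref{PROP:SelfsimilarityImpliesCB} (packaged in the paper as Theorem \ref{TEO:Classification_CB_Maps_OneMaximalEnd}) then gives globally CB. Your sketch, as written, is a plan to reprove that proposition via ``fragmentation + absorption'' rather than a proof, and carrying it out is genuinely nontrivial. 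The point you miss is where the novelty actually sits: dropping tameness is accomplished entirely by Theorem \ref{TEO:OneMaximalEndAndLocallyCBimpliesSpaceEndsSelfSimilar} (self-similarity without tameness), after which the CB machinery needed is the standard, already-cited one; no new bounded-displacement argument is required.
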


Thanks to Theorem \ref{TEO::ClassificationLocCBMAPsWithOneMaximalEnd} we have a better understanding of those surfaces with locally but not globally CB mapping class group. In Section \ref{ProofCor} below we prove the following result;  see also Proposition \ref{PROP:ConditionToBeNotLocallyCB} for a more refined statement about the space of ends.
\begin{corollary}\label{CORO:EndsUncountable}
    Let $\Sigma$ be an infinite-type surface with a unique maximal end and suppose that $\Map(\Sigma)$ is locally CB but not globally CB. Then $\Sigma$ has finite nonzero genus and the space of ends of $\Sigma$ is uncountable. 
\end{corollary}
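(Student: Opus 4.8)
The statement splits into two independent assertions. The claim that $\Sigma$ has finite nonzero genus is immediate from Theorem \ref{TEO::ClassificationLocCBMAPsWithOneMaximalEnd}: since $\Map(\Sigma)$ is locally CB but not globally CB, the genus of $\Sigma$ is neither $0$ nor $\infty$, hence finite and nonzero. The content of the corollary is therefore the uncountability of $\Ends(\Sigma)$, which I would prove by contradiction, assuming $\Ends(\Sigma)$ countable.

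First I would record two structural consequences of the hypotheses. Because the genus $g$ is finite, no end is accumulated by genus and $\Sigma$ is homeomorphic to $S_g\setminus E$, where $S_g$ is the closed surface of genus $g$ and $E\subseteq S_g$ is a closed, totally disconnected copy of $\Ends(\Sigma)$; consequently every element of $\Homeo^+(\Sigma)$ extends to $S_g$ and induces a homeomorphism, in particular a bijection, $\hat f$ of $E$. Next, applying Theorem \ref{TEO:BigMapsLocallyCB1maxEnd} I would fix the finite-type subsurface $K$ and the decomposition $\Sigma\setminus K=\Sigma_0\sqcup\cdots\sqcup\Sigma_n$; since the genus is finite and each $g(\Sigma_i)\in\{0,\infty\}$, every $\Sigma_i$ is planar and all the genus sits in $K$, so $K\ne\emptyset$. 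As $\Map(\Sigma)$ is not globally CB, the Remark following Theorem \ref{TEO:BigMapsLocallyCB1maxEnd} (see Lemma \ref{LEMMA:SufficientToBeGloballyCB}) rules out the alternative $f_U(\Sigma\setminus U)\subseteq U$ in condition (2), leaving only $f_U(\Sigma_0)\subseteq U$. Writing $A:=\Ends(\Sigma_0)$, a clopen subset of $E$ containing $x$, this yields the \emph{compression property}: for every clopen $C\subseteq A\setminus\{x\}$ there is $\hat f$ as above with $\hat f(A)\subseteq A\setminus C$. Here I use that any clopen $C\subseteq A$ avoiding $x$ is realized as $A\setminus\Ends(U)$ for a neighborhood $U\subseteq\Sigma_0$ of $x$, by cutting along a multicurve separating the ends of $C$ from $x$.

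Now I would invoke countability. A countable compact metrizable space is classified up to homeomorphism by its Cantor--Bendixson invariants; since $x$ is the unique maximal end and all ends are planar, I would argue that $x$ is the unique point of maximal Cantor--Bendixson rank $\beta$, so that $E\cong\omega^\beta+1$. By Theorem \ref{TEO:OneMaximalEndAndLocallyCBimpliesSpaceEndsSelfSimilar} the space $E$ is self-similar (Definition \ref{DEF:SelfsimilarEndsSpace}); applied to the clopen partition $E=A\sqcup(E\setminus A)$, self-similarity produces a clopen copy of $E$ inside one of the pieces, and since such a copy must contain a point of rank $\beta$, necessarily $x$, it lies in $A$. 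Hence $A$ contains a clopen copy of $E$, forcing $A\cong E\cong\omega^\beta+1$ and leaving $B:=E\setminus A$ a compact countable space of rank $<\beta$. To finish, I would feed the compression property a well-chosen $C$: since $\hat f(A)\cong A\cong E$ has a rank-$\beta$ point, that point is $x$, so $x\in\hat f(A)$ and $E\setminus\hat f(A)$ has rank $<\beta$; moreover $E\setminus\hat f(A)=B\sqcup(A\setminus\hat f(A))$ contains the disjoint clopen sets $B$ and $C$, while $E\setminus\hat f(A)=\hat f(B)\cong B$. Choosing $C\subseteq A\setminus\{x\}$ whose top Cantor--Bendixson rank exceeds that of $B$, or, in the boundary case where the ranks agree, whose number of top-rank points exceeds that of $B$ (both possible because $A\cong\omega^\beta+1$ has points of every rank below $\beta$ and infinitely many of each), contradicts $E\setminus\hat f(A)\cong B$.

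The main obstacle I anticipate is this last step: justifying that the unique maximal end is exactly the unique point of maximal Cantor--Bendixson rank, relating the Mann--Rafi preorder to rank for planar ends, and then packaging the rank and top-point bookkeeping so that the overloading of $C$ yields a genuine contradiction in every case, including when $\beta$ is a limit ordinal. The degenerate case $n=0$ (so $A=E$) is easy and worth isolating first, since there the compression property directly exhibits a bijection of $E$ onto a proper subset, an immediate contradiction; the real work lies entirely in the case $A\subsetneq E$.
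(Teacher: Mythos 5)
Your proposal is correct and follows essentially the paper's own route: the genus statement is read off from Theorem \ref{TEO::ClassificationLocCBMAPsWithOneMaximalEnd}, and uncountability is obtained by contradiction using the finite-genus form of the locally CB criterion (only the alternative $f_U(\Sigma_0)\subseteq U$ can occur --- this is the paper's Corollary \ref{COROLLARY:BigMapsLocCBFiniteGenusOneMaximalEnd}) combined with the same Cantor--Bendixson counting argument on $E(\Sigma)\cong\omega^\beta+1$: a sufficiently small neighborhood $U$ of $x$ makes the complement contain more top-rank ends than $E(\Sigma\setminus\Sigma_0)$ can accommodate. Two inessential differences: the paper kills the alternative $f_U(\Sigma\setminus U)\subseteq U$ directly by genus (all genus lies in $K$, while $U$ is planar), which is cleaner than your appeal to Lemma \ref{LEMMA:SufficientToBeGloballyCB} (whose contrapositive only yields the compression on a cofinal family of neighborhoods, though that still suffices), and your detour through self-similarity to identify $A\cong E$ is unnecessary, since any clopen neighborhood of $x$ in $\omega^\beta+1$ is automatically homeomorphic to $\omega^\beta+1$.
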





Under the assumption of tameness,\cite[Theorem 1.6]{MR2019} states that a locally CB big mapping class group is CB generated if and only if the space of ends of the surface is not of \emph{limit type} and is of \emph{finite rank}, see \cite[Definitions 6.2 \& 6.5]{MR2019}. For surfaces $\Sigma$ with a unique maximal end, it can be shown that $\Map(\Sigma)$ is always of finite rank and not of limit type. We see that the tameness hypothesis is actually not needed in order to be CB generated.


\begin{theorem}\label{THM:CBGenerated}
    Let $\Sigma$ be an infinite-type surface with a unique maximal end and suppose that $\Map(\Sigma)$ is locally CB. Then $\Map(\Sigma)$ is CB generated.
\end{theorem}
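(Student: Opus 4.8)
The plan is to combine the structure theorem with Rosendal's coarse geometry. Since $\Map(\Sigma)$ is locally CB, Theorem~\ref{TEO:BigMapsLocallyCB1maxEnd} provides a connected finite-type subsurface $K$ with $\Sigma\setminus K=\Sigma_0\sqcup\cdots\sqcup\Sigma_n$ as in its statement, and shows that $\mathcal{V}_K=\{f\in\Map(\Sigma):f|_K=\mathrm{Id}_K\}$ is a CB open subgroup. If $\Map(\Sigma)$ is globally CB it is trivially CB generated, so I may assume it is not; then by Lemma~\ref{LEMMA:SufficientToBeGloballyCB} only the alternative $f_U(\Sigma_0)\subseteq U$ occurs in condition~(2). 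As the CB subsets of a Polish group are stable under finite unions \cite{Rosendal2021}, it suffices to produce a \emph{finite} set $F\subseteq\Map(\Sigma)$ with $\Map(\Sigma)=\langle\mathcal{V}_K\cup F\rangle$: then $\mathcal{V}_K\cup F$ is CB and generating, so $\Map(\Sigma)$ is CB generated. The decisive simplification is that every mapping class supported in $\Sigma\setminus K$ — in particular every handle shift inside an infinite-genus $\Sigma_i$ and every infinite-support map away from $K$ — already lies in $\mathcal{V}_K$, so $F$ only has to account for behavior on and around $K$.

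I would take $F$ to consist of: (a) a finite generating set of the mapping class group of the finite-type surface $K$, extended by the identity; (b) finitely many homeomorphisms realizing the symmetries of the complementary components compatible with fixing the maximal end $x$ — recall every element of $\Map(\Sigma)$ fixes $x$, since $x$ is the unique maximal element of the preorder on $\Ends(\Sigma)$ — namely interchanges of homeomorphic pieces $\Sigma_i\cong\Sigma_j$; and (c) a single shift $f_U$ toward the maximal end, where by condition~(2) of Theorem~\ref{TEO:BigMapsLocallyCB1maxEnd}, for a fixed neighborhood $U\subseteq\Sigma_0$ of $x$, there is $f_U$ with $f_U(\Sigma_0)\subseteq U$. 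Because $\Sigma$ has a unique maximal end, $\Map(\Sigma)$ is of finite rank and not of limit type \cite[Definitions 6.2 \& 6.5]{MR2019}, which is exactly what guarantees that finitely many such symmetries together with a single shift suffice.

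To see that these generate, I would reduce an arbitrary $\phi\in\Map(\Sigma)$ into $\mathcal{V}_K$ in stages. Using the self-similarity of $\Ends(\Sigma_0)$ from Theorem~\ref{TEO:OneMaximalEndAndLocallyCBimpliesSpaceEndsSelfSimilar} together with iterates of $f_U$, one first multiplies $\phi$ by a word in $F$ and $\mathcal{V}_K$ so that the displaced subsurface $\phi(K)$ is carried back to a subsurface isotopic to $K$: the shift $f_U$ retracts any region that $\phi$ has pushed deep into a neighborhood of $x$ back to $K$, while the component interchanges in (b) correct which complementary piece meets which. The resulting mapping class then preserves $K$ up to isotopy and acts on $K$ through $\Map(K)$, so a further word in the generators from (a) makes it fix $K$ pointwise, landing in $\mathcal{V}_K$. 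Unwinding this expresses $\phi$ as a product of elements of $\mathcal{V}_K\cup F$ and their inverses.

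The main obstacle is precisely this first reduction, and it is where the argument must improve on \cite[Theorem~1.6]{MR2019}: one must undo the displacement of $K$ by a bounded word \emph{without} assuming the surface is tame (Definition~\ref{DEF:TameSurface}). The point that makes this possible is that, for a unique maximal end, stability of a neighborhood of $x$ comes for free: condition~(2) and the self-similarity of $\Ends(\Sigma_0)$ together give, for every smaller neighborhood $U$ of $x$, a homeomorphic copy of $\Sigma_0$ inside $U$ in the sense of Definition~\ref{DEF:StableNeigbTameness}, so iterating $f_U$ genuinely retracts any prescribed finite-type region near $x$ onto $K$. Hence the stability of the immediate predecessors of $x$, which tameness requires and on which the Mann--Rafi proof relies, is never invoked; the finitely many generators of $F$ together with the CB subgroup $\mathcal{V}_K$ already generate $\Map(\Sigma)$, establishing CB generation.
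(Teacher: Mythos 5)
Your overall framework --- produce a finite set $F$ with $\Map(\Sigma)=\langle \mathcal{V}_K\cup F\rangle$, so that $\mathcal{V}_K\cup F$ is a CB generating set --- is exactly the paper's strategy, but your choice of $F$ omits the two ingredients that make the reduction work, and the reduction itself is asserted rather than proved. First, nothing in your $F$ is shown to move the pieces $\Sigma_1,\dots,\Sigma_n$ into $\Sigma_0$: the interchanges in (b) only permute homeomorphic pieces $\Sigma_i$, $i\geq 1$, among themselves (they cannot involve $\Sigma_0$, since every mapping class fixes $x$); the generators in (a) are supported in $K$; elements of $\mathcal{V}_K$ preserve each $\Sigma_i$ setwise; and the shift in (c) is an \emph{arbitrary} homeomorphism with $f_U(\Sigma_0)\subseteq U$, with no control whatsoever on $f_U(\Sigma_i)$. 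But an arbitrary $\phi\in\Map(\Sigma)$ can mix the ends of $P_K=\Sigma_1\cup\dots\cup\Sigma_n$ with those of $\Sigma_0$, and undoing this is the heart of the matter. The paper handles it by placing into its generating set a specific element $g_K$ with $g_K(P_K)\subseteq\Sigma_0$ (which exists by self-similarity) and then, choosing $U$ with $f(U)\subseteq\Sigma_0$ and a copy $P_K'$ of $P_K$ inside $U$, using two elements $h,h'\in\Map(\Sigma_0)\subseteq\mathcal{V}_K$ (via the genus-zero classification inside $\Sigma_0$) to produce $f',f''$ in the group with $f'f^{-1}f''$ fixing $P_K$ pointwise. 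You have no substitute for $g_K$, and your central claim that ``iterates of $f_U$ retract any region pushed deep into a neighborhood of $x$ back to $K$'' is not correct as stated: $\phi(K)$ is in no way aligned with the nested images $f_U^m(\Sigma_0)$, and carrying it back to $K$ requires matching the end data of \emph{all} complementary pieces, not just translating along a single shift.

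Second, even once one has $g:=f'f^{-1}f''$ fixing $P_K$ pointwise, the final correction takes place in $\Map(S')$ for the finite-type subsurface $S'=K\cup g(K)$, which is in general strictly larger than $K$: one must carry $g(\partial\Sigma_0)$ back to $\partial\Sigma_0$, and one must also correct maps that preserve $K$ setwise but rotate or permute its boundary curves --- such maps are not restrictions of elements of $\Map(K)$, which by convention fixes $\partial K$ pointwise. For this the paper invokes Lemma~\ref{Lemma:CBGenerationOfClassicalMCG} (Mann--Rafi's Observation 6.9): a finite set $D_K$ of Dehn twists, about curves \emph{not} contained in $K$, such that $\langle D_K\cup\mathcal{V}_K\rangle$ contains $\Map(S')$ for \emph{every} finite-type $S'$. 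Your set (a) cannot play this role. Indeed, every generator in $\mathcal{V}_K\cup(\mathrm{a})\cup(\mathrm{b})$ sends the multicurve $\partial K$ to an isotopic multicurve, so the subgroup they generate lies in the stabilizer of the isotopy class of $\partial K$ and contains no Dehn twist about a curve essentially crossing $\partial K$; the only generator outside this stabilizer is $f_U$, and you give no argument that adjoining it recovers such twists. Finally, the appeal to ``finite rank and not of limit type'' does no work --- the paper's proof never uses those notions. The repair is essentially to adopt the paper's generating set: replace (a) and (b) by the Dehn twists $D_K$ of Lemma~\ref{Lemma:CBGenerationOfClassicalMCG}, add the element $g_K$ with $g_K(P_K)\subseteq\Sigma_0$, and then run the two-step reduction described above.
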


Recall that a necessary condition for a group to be CB generated is that the group is locally CB. Our Theorem \ref{THM:CBGenerated} shows that it is also a sufficient condition for big mapping class groups of surfaces with a unique maximal end. Recently, T. Hill \cite{Hill2023} observed the same phenomena for pure mapping class groups. 

Additionally, it follows that our Theorem \ref{TEO:BigMapsLocallyCB1maxEnd} and \cite[Theorem 1.4]{MR2019} give  explicit criteria for determining which big mapping class groups are CB generated.

\begin{remark}
 It follows from Theorem \ref{THM:CBGenerated} and the work of Horbez, Qing and Rafi  \cite[Theorem 1 and Corollary 2]{OQR2022} that for CB generated big mapping class groups of surfaces $\Sigma$ with a unique maximal end, either 
    \begin{enumerate}
        \item $\Map(\Sigma)$ is globally CB and therefore the quasi-isometric type of $\Map(\Sigma)$ is trivial or,
        \item $\Map(\Sigma)$ admits a continuous and non-elementary action by isometries on a hyperbolic space; in this situation, the space of non-trivial quasi-morphisms of $\Map(\Sigma)$ has infinite dimension.
    \end{enumerate}
\end{remark}

\noindent {\bf Answering a question of Mann and Rafi}. 
Thanks to our Theorem \ref{THM:CBGenerated} we get a positive answer to \cite[Problem 6.12]{MR2019}.

\begin{theorem}\label{THM:Example}
    There exists a non-tame infinite-type surface whose mapping class group is CB generated but it is not globally CB.  
\end{theorem}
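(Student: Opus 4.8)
The plan is to exhibit a single surface $\Sigma$ realizing all the required features and then let the earlier theorems do the work. Concretely, I would construct an infinite-type surface $\Sigma$ with a unique maximal end $x$ such that (i) $\Map(\Sigma)$ is locally CB, (ii) the genus of $\Sigma$ is finite and nonzero, and (iii) $\Sigma$ is not tame in the sense of Definition~\ref{DEF:TameSurface}. Granting such a $\Sigma$, Theorem~\ref{THM:CBGenerated} immediately gives that $\Map(\Sigma)$ is CB generated, while Theorem~\ref{TEO::ClassificationLocCBMAPsWithOneMaximalEnd} together with (ii) shows that $\Map(\Sigma)$ is not globally CB; combined with (iii) this proves the statement. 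Thus the whole content of the proof is the construction of $\Sigma$ and the verification of (i)--(iii). Moreover, since $\Sigma$ then lies exactly in the regime of a unique maximal end with $\Map(\Sigma)$ locally CB but not globally CB, the equivalence of the two notions of tameness established in Section~\ref{LastSe} guarantees that $\Sigma$ is non-tame in the original sense of Mann and Rafi as well, so the example genuinely answers \cite[Problem 6.12]{MR2019}.

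For the construction I would fix an integer $g\geq 1$ and take $\Sigma$ to have a compact genus-$g$ piece as a core $K$, with $\Sigma\setminus K=\Sigma_0\sqcup\Sigma_1\sqcup\cdots\sqcup\Sigma_n$ ($n\geq 1$) a disjoint union of planar (genus $0$) infinite-type subsurfaces, where $\Sigma_0$ carries the maximal end $x$. In particular the total genus equals $g$, which is finite and nonzero, and condition (1) of Theorem~\ref{TEO:BigMapsLocallyCB1maxEnd} holds by construction. I would design $\Ends(\Sigma_0)$ to be a self-similar space having $x$ as its unique maximal end; self-similarity is precisely what lets me verify condition (2) of Theorem~\ref{TEO:BigMapsLocallyCB1maxEnd}, namely that every neighborhood $U\subseteq\Sigma_0$ of $x$ absorbs $\Sigma_0$ via some $f_U\in\Homeo^+(\Sigma)$ with $f_U(\Sigma_0)\subseteq U$. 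Here the presence of the additional, noncompact complementary pieces $\Sigma_1,\dots,\Sigma_n$ is essential: they supply the room needed to realize $f_U$ as a \emph{global} homeomorphism even though the genus is confined to the compact core $K$ (with $n=0$ such an $f_U$ could not exist). Theorem~\ref{TEO:BigMapsLocallyCB1maxEnd} then yields (i) and hence (ii). Note also that (i) forces $\Ends(\Sigma)$ to be self-similar by Theorem~\ref{TEO:OneMaximalEndAndLocallyCBimpliesSpaceEndsSelfSimilar}, and by Corollary~\ref{CORO:EndsUncountable} the end space is necessarily uncountable, so the construction must produce a Cantor-like, rather than countable, end space.

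The heart of the matter, and the step I expect to be the main obstacle, is to make $\Sigma$ non-tame while keeping $\Ends(\Sigma)$ self-similar. To violate Definition~\ref{DEF:TameSurface} it suffices to produce a single end $z$ --- either $x$ itself or an immediate predecessor of $x$ --- that has no stable neighborhood in the sense of Definition~\ref{DEF:StableNeigbTameness}. I would do this by building a nested neighborhood basis $U_1\supsetneq U_2\supsetneq\cdots$ of $z$ whose pointed homeomorphism types strictly decrease, that is, such that for every $n$ the neighborhood $U_n$ admits no homeomorphic copy inside $U_{n+1}$; then no $U_n$ is stable and $\Sigma$ is non-tame. The tension is that self-similarity of $\Ends(\Sigma)$ tends to create enough self-copies to stabilize the germ at every end, which is exactly why locally CB surfaces are so frequently tame. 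To overcome this I would concentrate the self-similar copies of $\Ends(\Sigma)$ near the maximal end $x$ and, at the chosen end $z$, introduce a sequence of planar gadgets whose local types at $z$ are arranged so that the pointed type of $U_n$ does not embed into $U_{n+1}$; because $z$ is not maximal, its small neighborhoods do not contain copies of all of $\Ends(\Sigma)$, so the abundant self-copies near $x$ cannot heal the non-stability at $z$. Verifying simultaneously that (a) the germ at $z$ genuinely fails to stabilize (so the gadgets' complexities must be chosen incomparable enough that no forward embedding $U_n\hookrightarrow U_{n+1}$ exists), (b) $z$ is an immediate predecessor of the maximal-type end $x$ (so that the failure is detected by Definition~\ref{DEF:TameSurface}), and (c) the global end space remains self-similar (so that (i)--(ii) persist) is the delicate part, and it is where I expect the real work of the proof to lie.
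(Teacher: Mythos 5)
Your reduction is exactly the paper's: produce a surface $\Sigma$ with a unique maximal end $x$, finite nonzero genus, $\Map(\Sigma)$ locally CB, and a failure of tameness; then Theorem \ref{THM:CBGenerated} gives CB generation and Theorem \ref{TEO::ClassificationLocCBMAPsWithOneMaximalEnd} gives failure of global CB. You have even located the right mechanism for non-tameness (pairwise non-embeddable local types at a predecessor end, with self-similarity concentrated at $x$), and your side remarks are sound --- in particular, in this regime condition (2) of Corollary \ref{COROLLARY:BigMapsLocCBFiniteGenusOneMaximalEnd} forces $\Sigma_0$ to be a stable neighborhood of $x$, so non-tameness indeed can only come from an immediate predecessor, and your observation that one needs at least one extra complementary piece $\Sigma_1$ is correct. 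But the statement being proved is an existence statement, so the construction \emph{is} the proof, and your proposal stops precisely where the proof must begin: you never exhibit the ``planar gadgets,'' never specify the end space, and explicitly defer the simultaneous verification of (a) non-stability at $z$, (b) $z$ being an immediate predecessor, and (c) self-similarity plus local CB, calling it where the real work lies. As written, this is a plan for a proof, not a proof; the gap is the entire construction.

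For comparison, here is how the paper discharges that work, i.e., the choices your sketch leaves open. For each $n$ one takes a genus-zero surface $D_n$ with one boundary component whose end space has Cantor--Bendixson rank $n$ and contains a Cantor set $C_n$ of rank-$n$ ends; the point of varying the rank is that ends of $C_n$ and $C_m$ are incomparable for $n\neq m$, which is your ``incomparable gadgets'' condition made concrete. These are distributed along a Cantor tree surface $T$, with $2^n$ copies of $D_n$ at the $n$th level. Then for any Cantor-tree end $z$ and any neighborhood $U$ of $z$, letting $n_U$ be the smallest $n$ with $C_{n_U}\cap \Ends(U)\neq\emptyset$, one can choose a smaller neighborhood $V\subseteq U$ of $z$ whose end set misses $C_{n_U}$; since no end of $V$ is equivalent to a $C_{n_U}$-end, $V$ contains no copy of $U$, so $z$ has no stable neighborhood. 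Finally $\Sigma$ is obtained from the genus-$g$ surface $F_g$ with end space $\omega+1$ by replacing each isolated end with a copy of $T$: copies of $T$ accumulate at the maximal end, so $\Ends(\Sigma)$ is self-similar, the Cantor-tree ends become immediate predecessors of $x$ without stable neighborhoods, and the compact genus-$g$ core $K$ with two boundary components (one complementary component being the first copy of $T$) witnesses Theorem \ref{TEO:BigMapsLocallyCB1maxEnd}, hence local CB. Until your sketch is fleshed out to this level of concreteness --- or some equivalent one --- the theorem is not yet proved.
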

\begin{remark}
In \cite[Example 6.13]{MR2019} Mann and Rafi constructed a non-tame surface with a unique maximal end whose mapping class group is globally CB. We note that in this example the set of immediate predecessors of the unique maximal end is countable. We can modify the Mann-Rafi example by adding finite nonzero genus to obtain a non-tame surface with a unique maximal end. However, by Proposition \ref{PROP:ConditionToBeNotLocallyCB} below, this new surface is not locally CB because the unique maximal end has countably many immediate predecessors. Our example (Section 7) satisfies that the equivalence class of each immediate predecessor of the unique maximal end is uncountable, the unique maximal end has stable neighborhoods, and some of the immediate predecessors of the unique maximal end do not have stable neighborhoods.   
\end{remark}

\medskip

\noindent {\bf Outline}. In Section 2 we present the preliminaries, Section 3 is devoted to the proof of Theorem \ref{TEO:BigMapsLocallyCB1maxEnd}, Section 4 to the proof of Theorem \ref{TEO:OneMaximalEndAndLocallyCBimpliesSpaceEndsSelfSimilar} and Theorem \ref{TEO::ClassificationLocCBMAPsWithOneMaximalEnd}, Section 5 to the proof of Corollary \ref{CORO:EndsUncountable}. Finally, in Section 6 we give the proof of Theorem \ref{THM:CBGenerated}, and in Section 7 we prove Theorem \ref{THM:Example}.\\

\noindent {\bf Acknowledgments}. We are grateful for the financial support of CONAHCYT grant CF 2019-217392. The first author acknowledges funding from a DGAPA-UNAM PASPA sabbatical fellowship and the  second author was supported a DGAPA-UNAM postdoctoral fellowship. We thank K. Mann for kindly answering all our questions, and A. Randecker and J. Hernández-Hernández for helpful comments on an earlier draft of this paper. We are grateful to the referee for suggestions and corrections that improved the exposition.

\section{Preliminaries}\label{SECTION:Preliminaries}

\noindent {\bf Topological surfaces}. 
All our surfaces are assumed to be connected, orientable and possibly with non-empty boundary. The boundary of a surface $\Sigma$ is denoted by $\partial \Sigma$ and always supposed to be compact. A surface is of \emph{finite type} if its fundamental group is finitely generated. Otherwise, we say that it is of \emph{infinite type}. Unless otherwise specified, infinite-type surfaces will be assumed to have empty boundary. Finite-type surfaces are classified, up to homeomorphisms, by their genus, number of punctures and number of boundary components. An infinite-type surface $\Sigma$ with empty boundary is classified, up to homeomorphisms, by their genus (which can be infinite) and a pair of nested topological spaces $\Ends_\infty(\Sigma)\subseteq \Ends(\Sigma)$. The space $\Ends(\Sigma)$ is called the {\it space of ends} of $\Sigma$ and it is homeomorphic to a clopen subset of the Cantor space. The space $\Ends_\infty(\Sigma)$ is a closed subspace of the space of ends and it encodes all ends of $\Sigma$ which are accumulated by genus. Moreover, $\hat{\Sigma}:=\Sigma\cup \Ends(\Sigma)$ is compact and it is called the \emph{Freudenthal compactification} of $\Sigma$. We refer the reader to the work of Richards \cite{Richards1963} and the book of Ahlfors and Sario \cite{AhlforsSario2015} for a detailed discussion about classification of surfaces. 

Any homeomorphism $f:\Sigma \rightarrow \Sigma$ has a unique homeomorphism extension $\hat{f}:\hat{\Sigma}\to \hat{\Sigma}$. In particular, the restriction of $\hat{f}$ to $\Ends(\Sigma)$ induces a homeomorphism of the nested pair $(\Ends(\Sigma),\Ends_{\infty}(\Sigma))$ to itself. From \cite{Richards1963} we have that if the nested pair of spaces $(A,B)\subseteq (\Ends(\Sigma),\Ends_\infty(\Sigma))$ is homeomorphic to the nested pair $(A',B')\subseteq (\Ends(\Sigma),\Ends_\infty(\Sigma))$ then there is a homeomorphism $f:\Sigma \to \Sigma$ such that its extension $\hat{f}$ sends the nested pair $(A,B)$ into $(A',B')$. We assume that any homeomorphism between subsets of $\Ends(\Sigma)$ is induced by a homeomorphism of the surface $\Sigma$. Abusing of notation, we will usually refer simply by $f$ to the homeomorphism of the surface $\Sigma$ or to its extension to $\hat{\Sigma}$. 

A \emph{simple closed curve} in $\Sigma$ is an embedding of the circle into $\Sigma$. A simple closed curve is \emph{essential} if it is not homotopic to a point,  a puncture, or a boundary component. All curves we consider in this paper will be essential, so we refer to them simply as \emph{curves}. We say that a curve $\alpha$ in $\Sigma$ is \emph{separating} if $\Sigma\setminus \alpha$ is disconnected. 

By a subsurface of $\Sigma$ we mean a subspace $S\subseteq \Sigma$ that is a surface itself (possibly with non-empty boundary). If we do not specify it in the paper, we assume that all boundary curves of a subsurface are separating curves in $\Sigma$. Furthermore, any subsurface of finite type is assumed to have non-empty boundary. 
\medskip

\noindent {\bf Big mapping class groups}. The \emph{mapping class group} of a surface (of finite or infinite type) $\Sigma$, denoted by $\Map(\Sigma)$, is the group of all isotopy classes of orientation-preserving self-homeomorphisms of $\Sigma$;  if $\partial \Sigma\neq \emptyset$, then we require that all homeomorphisms and
isotopies fix $\partial \Sigma$ pointwise. If we equip the homeomorphism group of $\Sigma$ with the compact-open topology then $\Map(\Sigma)$ is a Polish group with respect to the quotient topology. In recent literature, 
the mapping class groups of infinite-type surfaces are often called \emph{big mapping class groups}.  For the rest of the paper, $\Sigma$ denotes an infinite-type surface. Moreover, any homeomorphism $f$ of $\Sigma$ to itself is assumed to be orientation-preserving.


Given a subsurface $S\subseteq \Sigma$ we denote by $\mathcal{V}_S$ the subgroup of $\Map(\Sigma)$ defined by all the homeomorphisms $f:\Sigma\rightarrow \Sigma$ such that $f\vert_{S}=Id_S$ up to isotopy. By the Alexander's method \cite{FM12}, if $S$ is a finite-type subsurface of $\Sigma$ then $\mathcal{V}_S$ is an open subgroup. Moreover, the collection of open subgroups $\mathcal{V}_S$ where $S$ runs over all finite-type subsurfaces of $\Sigma$ forms a base of neighborhoods of the identity. Therefore, $\Map(\Sigma)$ is first countable and, in particular, 
$\Map(\Sigma)$ is a non-Archimedian\footnote{A Polish group is \emph{non-Archimedian} if the identity has a basis of open subgroups, see \cite{BeckerKechris1996}.} group. Throughout the article, we often use the following fact: for any neighborhood $V$ of the identity in $\Map(\Sigma)$ there is a finite-type subsurface $S\subseteq \Sigma$ such that $\mathcal{V}_S\subseteq V$.

\medskip

\noindent {\bf Large scale geometry of Polish groups}. We use the following characterization of coarsely bounded sets  (CB sets). 
\begin{theorem}[Proposition 2.7, \cite{Rosendal2021}]\label{TEO:Equivalence_CB}
  Let $G$ be a Polish group and $A$ be a subset of $G$. The following are equivalent
  \begin{enumerate}
      \item $A$ is CB.
      \item For every neighborhood $V$ of the identity in $G$, there is a finite subset $F\subseteq G$ and some $k\geq 1$ such that $A\subseteq (FV)^k$.
  \end{enumerate}
\end{theorem}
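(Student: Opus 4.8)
The statement is an equivalence, so I would prove the two implications separately, after one normalization. Since $G$ is Polish it is first countable, so by the Birkhoff--Kakutani theorem it carries a compatible left-invariant metric $\partial$, which I may assume bounded by truncating. The preliminary observation I would rely on throughout is that if $d$ is any continuous left-invariant \'ecart on $G$, then $\partial+d$ is again a compatible left-invariant metric: it is left-invariant and separating, it dominates $\partial$ hence refines the topology, and being continuous its balls are open so it coarsens no further. Consequently $A$ is CB if and only if $A$ has finite diameter under every continuous left-invariant \'ecart (not merely every compatible metric), because an \'ecart unbounded on $A$ produces, via $\partial+d$, a compatible metric unbounded on $A$, and conversely a compatible metric is itself such an \'ecart.

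For $(2)\Rightarrow(1)$ I would argue directly. Let $d$ be a continuous left-invariant \'ecart and set $V=\{g:d(1,g)<1\}$, an open symmetric neighborhood of the identity. By hypothesis there are a finite set $F$ and an integer $k\ge 1$ with $A\subseteq(FV)^k$. Left-invariance gives $\mathrm{diam}_d(V)\le 2$ and, since $F$ is finite, $\sup_{s\in FV}d(1,s)\le\max_{f\in F}d(1,f)+1<\infty$; iterating along products and using subadditivity of $d(1,\cdot)$ under multiplication yields $\sup_{g\in(FV)^k}d(1,g)\le k\bigl(\max_{f\in F}d(1,f)+1\bigr)$. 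Thus $A$ is $d$-bounded, and as $d$ was arbitrary $A$ is CB.

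For $(1)\Rightarrow(2)$ I would argue by contraposition: assume there is a symmetric open $V\ni 1$ with $A\not\subseteq(FV)^k$ for every finite $F$ and every $k$, and build a continuous left-invariant \'ecart $\rho$ that is unbounded on $A$; by the reformulation above this shows $A$ is not CB. The construction declares $V$ to have $\rho$-radius at most $1$ and charges increasing, unbounded costs to cross successive cosets of the clopen subgroup $\langle V\rangle$ (of which there are only countably many, since $G$ is separable); concretely one takes the maximal continuous left-invariant \'ecart with $\sup_{v\in V}\rho(1,v)\le 1$. The point to verify is that its balls are tightly controlled by the data: for each radius $R$ one must show $\{g:\rho(1,g)<R\}\subseteq(F_RV)^{k_R}$ for some finite $F_R$ and integer $k_R$. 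Granting this, the failure of $(2)$ gives $A\not\subseteq(F_RV)^{k_R}$, so $A$ leaves every $\rho$-ball and $\rho$ is unbounded on $A$.

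The main obstacle is precisely this last construction. The difficulty is to produce a single \'ecart that is honestly continuous --- which forces it to be small, indeed locally constant, near the identity and uses that $V$ is open and $\langle V\rangle$ is clopen --- while keeping each of its finite-radius balls inside one set $(F_RV)^{k_R}$ governed by a \emph{single} finite $F_R$ and a \emph{single} exponent $k_R$. Equivalently, one must identify the bornology generated by $V$ (the sets contained in some $(FV)^k$) with the bounded sets of the maximal such \'ecart: the finiteness of $F_R$ reflects that only boundedly many cosets of $\langle V\rangle$ are reachable within a bounded radius, and the boundedness of $k_R$ reflects a uniform bound on the number of coarse $V$-steps. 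Carrying out a weighted word-length construction that simultaneously achieves continuity and this two-fold finiteness is the technical heart of the argument.
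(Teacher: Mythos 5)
The paper offers no proof of this statement at all: it is quoted directly from Rosendal as [Proposition 2.7, Rosendal2021], so your attempt can only be measured against the cited source, whose architecture you partially mirror. Your direction (2)$\Rightarrow$(1) is complete and correct: the reduction from compatible left-invariant metrics to continuous left-invariant \'ecarts via $\partial+d$ is sound (left-invariance and separation come from $\partial$, continuity of $\partial+d$ makes its balls open, so it is compatible), and the estimate $\sup_{g\in(FV)^k}d(1,g)\le k\bigl(\max_{f\in F}d(1,f)+1\bigr)$ is the standard computation.

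The gap is in (1)$\Rightarrow$(2), and it is genuine on two counts. First, the object you name does not exist: there is no \emph{maximal continuous} left-invariant \'ecart with $\sup_{v\in V}\rho(1,v)\le 1$. The maximal left-invariant \'ecart bounded by $1$ on $V$ is the $V$-word \'ecart $g\mapsto\min\{k:g\in V^k\}$ on $\langle V\rangle$, which is integer-valued and not continuous (take $G=\mathbb{R}$, $V=(-1,1)$), and the pointwise supremum of the \emph{continuous} members of the family is again discontinuous at the identity: in $\mathbb{R}$ the \'ecarts $\min(|x-y|/\epsilon,1)$ all qualify, and their supremum is $1$ at every pair of distinct nearby points, so no continuous maximum can dominate them all. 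Second, even granting some continuous $\rho$ with $\rho\le 1$ on $V$, the containment $\{g:\rho(1,g)<R\}\subseteq(F_RV)^{k_R}$ fails for arbitrary such $\rho$ --- a bounded \'ecart has $B_\rho(1;R)=G$ for large $R$, while $G$ need not be covered by any $(FV)^k$ --- so the \'ecart must be specifically engineered. That engineering (in Rosendal's argument: choose by Birkhoff--Kakutani a continuous left-invariant \'ecart $d$ with $B_d(1;1)\subseteq V$, build a weighted chain \'ecart that charges $V$-steps their $d$-cost but forces cost to accumulate over many coarse steps, and use separability of the Polish group to enumerate the countably many cosets of the clopen subgroup $\langle V\rangle$ and assign finite but growing weights to coset jumps, so that an $R$-ball meets only finitely many cosets and admits a uniform bound on the number of $V$-steps) is exactly what you defer: your final paragraph explicitly flags the ball-control verification as an unresolved ``main obstacle'' rather than carrying it out. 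So you have the right frame, a correct easy direction, and a correct identification of where the work lies, but the heart of the hard implication is missing, and the one concrete construction you propose for it is not well defined.
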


\medskip

\noindent {\bf Partial order on the space of ends}. We recall the partial order on the space of ends $\Ends(\Sigma)$ of a surface $\Sigma$ introduced by Mann and Rafi in \cite{MR2019}.

\begin{definition}
    Let $x,y\in \Ends(\Sigma)$. We define the binary relation on $\Ends(\Sigma)$ where $y\preceq x$ if for any neighborhood $U_x$ of $x$ in $\Ends(\Sigma)$ there is a neighborhood $U_y$ of $y$ and a homeomorphism $f$ of the surface $\Sigma$ such that $f(U_y)\subseteq U_x$.
\end{definition}

We obtain an equivalence relation on $\Ends(\Sigma)$ declaring that two ends $x,y\in \Ends(\Sigma)$ are of the \emph{same type} if $y\preceq x$ and $x\preceq y$.  Equivalently, $x$ and $y$ are of the same type if and only if there exists a homeomorphism $h$ of $\Sigma$ such that $h(x)=y$; see \cite[Theorem 1.2]{MR2021two}. Define $y\prec x$ if $y\preceq x$ but $x$ and $y$ are not of the same type. The relation $\prec$ defines a partial order on the set of equivalence classes of ends.

\begin{proposition}[Proposition 4.7,\cite{MR2019}]
    The partial order $\prec$ has maximal elements. Moreover, the equivalence class of a maximal element is either finite or a Cantor set.    
\end{proposition}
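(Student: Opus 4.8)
The plan is to treat the two assertions separately: existence of maximal elements by a Zorn's lemma argument whose upper-bound step is supplied by compactness of $\Ends(\Sigma)$, and the structure of a maximal class by a homogeneity argument combined with the fact that the class is \emph{closed}.

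For existence, I would verify the hypothesis of Zorn's lemma for the poset of $\prec$-classes, i.e.\ that every chain $\{[x_\alpha]\}_{\alpha\in A}$ admits an upper bound. Fix representatives $x_\alpha\in\Ends(\Sigma)$ and, for each $\alpha$, set $F_\alpha:=\overline{\{x_\beta:\beta\geq\alpha\}}$. These are nonempty closed subsets of the compact space $\Ends(\Sigma)$, nested decreasingly along the chain, so any finite subfamily contains the tail indexed by the largest of finitely many indices; hence $\{F_\alpha\}$ has the finite intersection property and $\bigcap_\alpha F_\alpha\neq\emptyset$. Pick $z$ in this intersection. Given $\alpha$ and any neighborhood $U_z$ of $z$, the point $z$ lies in $\overline{\{x_\beta:\beta\geq\alpha\}}$, so some $x_\beta$ with $\beta\geq\alpha$ lies in $U_z$; shrinking to a clopen neighborhood $W\subseteq U_z$ of $x_\beta$ and using $x_\alpha\preceq x_\beta$ produces a neighborhood of $x_\alpha$ and a homeomorphism mapping it into $W\subseteq U_z$. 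Thus $x_\alpha\preceq z$ for every $\alpha$, so $[z]$ is the desired upper bound, and Zorn's lemma yields a maximal class.

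For the second assertion, let $x$ be a maximal end and $E:=[x]$ its class; since ends of the same type are related by a homeomorphism of $\Sigma$ (as recalled just after the definition of $\preceq$), $E$ is exactly the orbit of $x$ under the induced action on $\Ends(\Sigma)$, hence is \emph{homogeneous}: any two of its points are interchanged by a self-homeomorphism of $\Ends(\Sigma)$ preserving $E$ setwise. Granting for a moment that $E$ is closed, the dichotomy follows: if some point of $E$ is isolated in $E$, then by homogeneity every point is, so $E$ is discrete and compact, hence finite; if no point is isolated, then $E$ is a nonempty compact perfect subset of the totally disconnected metrizable space $\Ends(\Sigma)$, hence a Cantor set by the standard characterization (a nonempty, compact, perfect, totally disconnected, metrizable space).

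The crux, and the step I expect to be the main obstacle, is proving that the class of a maximal end is closed, as this is the only place maximality is genuinely used. I would argue that every $z\in\overline{E}$ in fact satisfies $x\preceq z$: choosing $z_n\in E$ with $z_n\to z$ and, for a given neighborhood $U_z$ of $z$, a clopen neighborhood $W\subseteq U_z$ of some $z_n\in U_z$, the relation $x\preceq z_n$ (which holds since $z_n$ is of type $x$) provides a neighborhood of $x$ and a homeomorphism mapping it into $W\subseteq U_z$. Since $x$ is maximal, $x\preceq z$ forces $z$ to be of the same type as $x$, i.e.\ $z\in E$; hence $\overline{E}=E$. The delicate points to get right are that $\Ends(\Sigma)$ admits a basis of clopen sets (so the shrinking steps are legitimate) and that sequences suffice because $\Ends(\Sigma)$ is metrizable; with these in hand the argument closes.
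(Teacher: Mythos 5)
Your proof is correct, but note that the paper itself offers no proof of this statement---it is quoted directly from \cite{MR2019}---so the relevant comparison is with the argument given there, which yours essentially reproduces: Zorn's lemma with upper bounds for chains produced from cluster points of tails (using compactness of $\Ends(\Sigma)$), then the key observation that maximality forces the equivalence class to be closed, and finally the homogeneity argument (via the fact that equivalent ends are related by a homeomorphism of $\Sigma$) combined with Brouwer's characterization of the Cantor set to get the finite-or-Cantor dichotomy. Your tail-closure sets $F_\alpha$ and the finite intersection property correctly handle the subtlety that an arbitrary cluster point of the chain need not dominate every element, which is the one place where a sloppier version of this argument would fail.
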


We denote by $E(x)$ the equivalence class of $x\in \Ends(\Sigma)$ and by $\mathcal{M}(\Sigma)$ the set of all maximal ends for $\prec$.

\begin{definition}[Unique maximal end]\label{DEF:UniqueMaximalEnd} If $\vert \mathcal{M}(\Sigma) \vert =1$ we say that $\Sigma$ has a \emph{unique maximal end}.
\end{definition}

Mann and Rafi also introduced the notion of self-similar space of ends that we recall now along with some of their results that will be needed for our proofs below.

\begin{definition}\label{DEF:SelfsimilarEndsSpace}
We say that the space of ends $(\Ends(\Sigma),\Ends_\infty(\Sigma))$ of an infinite-type surface $\Sigma$ is \emph{self-similar} if for any decomposition of $\Ends(\Sigma)$ into pairwise disjoint clopen sets $$\Ends(\Sigma)=E_1\sqcup E_2\sqcup \ldots \sqcup E_n$$ there exists a clopen set $D$ in some $E_i$ such that $(D, D\cap \Ends_\infty(\Sigma))$ is homeomorphic to $(\Ends(\Sigma),\Ends_\infty(\Sigma))$.    
\end{definition}

\begin{definition}\label{DEF:Non_DisplaceableSubsurfaces}
Let $\Sigma$ be an infinite-type surface. A finite-type subsurface $K$ of $\Sigma$, possible disconnected, is \emph{nondisplaceable} if for each homeomorphism $f$ of $\Sigma$ we have that $f(K)\cap K\neq \emptyset$.
\end{definition}

\begin{theorem}[Theorem 1.9, \cite{MR2019}]\label{TEO:NonDisplaceableImpliesNotGloballyCB}
Let $\Sigma$ be an infinite-type surface. If $\Sigma$ contains a nondisplaceable finite-type subsurface then $\Map(\Sigma)$ is not globally CB.
\end{theorem}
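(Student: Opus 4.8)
The plan is to argue by contradiction, extracting from the nondisplaceable subsurface $K$ an invariant that is unbounded on $\Map(\Sigma)$ but that can only grow linearly along the products allowed by coarse boundedness. Suppose $\Map(\Sigma)$ were globally CB. Since $K$ is of finite type, $\mathcal{V}_K$ is an open subgroup by Alexander's method, hence an open neighborhood of the identity. Applying the characterization in Theorem \ref{TEO:Equivalence_CB} to $V=\mathcal{V}_K$ would then yield a finite set $F\subseteq\Map(\Sigma)$ and an integer $k\ge 1$ with $\Map(\Sigma)=(F\mathcal{V}_K)^k$. The whole proof reduces to contradicting this single equality.

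To do so I would build a graph $X$ on which $\Map(\Sigma)$ acts by isometries, with vertex set the orbit of $K$, i.e.\ the isotopy classes of embedded copies $g(K)$, and basepoint $x_0=[K]$. By construction $\mathcal{V}_K$ fixes $x_0$, since $v|_K=\mathrm{Id}_K$ forces $v\cdot[K]=[K]$. The payoff of this setup is an automatic linear bound: because $\Map(\Sigma)$ acts by isometries and $v_i x_0=x_0$, for any $g=f_1v_1\cdots f_kv_k$ with $f_i\in F$ and $v_i\in\mathcal{V}_K$ one gets $\rho(g):=d_X(x_0,g\,x_0)\le\sum_{i=1}^k d_X(x_0,f_i x_0)\le kC$, where $C=\max_{f\in F}d_X(x_0,f x_0)$. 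Thus $\rho$ is bounded on every $(F\mathcal{V}_K)^k$. If I can also guarantee that $\rho$ is \emph{unbounded} on $\Map(\Sigma)$, i.e.\ that $X$ has infinite diameter, the equality $\Map(\Sigma)=(F\mathcal{V}_K)^k$ is impossible and the theorem follows. Nondisplaceability is decisive precisely in the choice of edges of $X$: since $f(K)\cap K\neq\emptyset$ for every $f$, no two copies in the orbit are disjoint, so a disjointness adjacency is empty while an intersection adjacency collapses the diameter to $2$. I would instead declare two copies adjacent when they are related by a mapping class supported in a uniformly bounded subsurface (equivalently, when their subsurface-projection distances are uniformly small), which keeps $X$ connected without forcing finite diameter; checking connectivity of this orbit-graph is a point to verify along the way.

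The main obstacle is establishing the infinite diameter of $X$, and this is where the real work lies. I would produce an element $\psi\in\Map(\Sigma)$ of positive translation length, so that $\rho(\psi^n)\to\infty$. Here nondisplaceability re-enters structurally: every copy of $K$ meets $K$, so $K$ is a \emph{witness} for all vertices and the relevant subsurface projections are always defined. A Masur--Minsky/Behrstock-type argument (bounded geodesic image together with the Behrstock inequality) should then convert the linear growth of projection distances under $\psi^n$ into a linear lower bound for $d_X(x_0,\psi^n x_0)$, ruling out shortcuts. The two delicate points are (a) exhibiting a suitable $\psi$, such as a partial pseudo-Anosov or a handle-shift type homeomorphism whose support interacts nontrivially with $K$ and whose powers genuinely spread copies of $K$ apart, and (b) running the no-shortcut estimate in the infinite-type setting, where the finite-type projection machinery must be applied carefully to families of pairwise-intersecting copies.

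With $X$ and $\psi$ in hand the conclusion is immediate: $\rho$ is unbounded yet bounded on each $(F\mathcal{V}_K)^k$, contradicting $\Map(\Sigma)=(F\mathcal{V}_K)^k$, so $\Map(\Sigma)$ is not globally CB. I expect the graph-plus-projection route to carry the technical weight, and it is worth first attempting a more hands-on substitute for $X$, for instance a combinatorial graph of $K$-carrying separating curves on which an explicit shift homeomorphism visibly has positive translation length; such a model would let one replace the full projection formalism by a direct intersection-number estimate and would make the unboundedness of $\rho$ transparent.
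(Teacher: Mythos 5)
This statement is not proved in the paper at all: it is imported verbatim as Theorem~1.9 of \cite{MR2019} and used as a black box, so there is no internal argument to compare yours against; your attempt must therefore stand on its own as a proof of the Mann--Rafi theorem. The parts of your plan that you actually carry out are correct but routine: invoking Theorem~\ref{TEO:Equivalence_CB} with $V=\mathcal{V}_K$ (which is open by Alexander's method) to get $\Map(\Sigma)=(F\mathcal{V}_K)^k$, and the linear orbit bound $d_X(x_0,gx_0)\le kC$ for $g\in(F\mathcal{V}_K)^k$, which follows from the isometric action together with $\mathcal{V}_K\cdot x_0=x_0$. This reduction is sound, and indeed finding an isometric action with $\mathcal{V}_K$-fixed basepoint and unbounded orbit is essentially \emph{equivalent} to non-coarse-boundedness; the problem is that everything beyond this equivalence is deferred.

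The genuine gaps are exactly the two items you flag yourself, and they constitute the entire mathematical content of the theorem. First, your graph $X$ is not defined: ``adjacent when related by a mapping class supported in a uniformly bounded subsurface'' has no meaning on an infinite-type surface (bounded with respect to what scale?), the parenthetical reformulation via ``uniformly small projection distances'' is not shown to be equivalent or even well posed, and connectivity is never verified --- yet connectivity is needed even for your upper bound, since otherwise $C=\max_{f\in F}d_X(x_0,fx_0)$ may be infinite and the estimate is vacuous. Second, and more seriously, the unboundedness of $X$ is simply asserted: you do not exhibit the element $\psi$ for an arbitrary pair $(\Sigma,K)$ with $K$ nondisplaceable (nondisplaceability is a hypothesis about all of $\Map(\Sigma)$ and by itself hands you no dynamics), you do not set up subsurface projections, bounded geodesic image, or a Behrstock-type inequality for your unspecified graph, and you do not run the no-shortcut estimate. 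Naming the Masur--Minsky machinery is not the same as applying it; in \cite{MR2019} the real work of the proof is precisely the construction of an explicit unbounded continuous length function (equivalently, such an action with provably unbounded orbits) from the nondisplaceable subsurface, with those connectivity and lower-bound estimates carried out. As written, your proposal restates the problem in the language of actions and proves only the easy half.
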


\begin{proposition}[Proposition 3.1, \cite{MR2019}]\label{PROP:SelfsimilarityImpliesCB}
Let $\Sigma$ be an infinite-type surface of infinite or zero genus. If $E(\Sigma)$ is self-similar then $\Map(\Sigma)$ is globally CB.
\end{proposition}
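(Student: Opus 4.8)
The plan is to verify Rosendal's coarse-boundedness criterion (Theorem \ref{TEO:Equivalence_CB}) directly for the whole group $G=\Map(\Sigma)$. Since $\Map(\Sigma)$ is non-Archimedean and the subgroups $\mathcal{V}_S$, with $S$ ranging over finite-type subsurfaces, form a basis of neighborhoods of the identity, it suffices to show that for every finite-type $S$ there are a finite set $F\subseteq\Map(\Sigma)$ and an integer $k\geq 1$, \emph{with $k$ independent of the element being written}, such that $\Map(\Sigma)\subseteq (F\,\mathcal{V}_S)^k$. The uniformity of $k$ is the whole point: it is what separates coarse boundedness from mere topological generation. Replacing $S$ by a larger connected finite-type subsurface only shrinks $\mathcal{V}_S$, so I would assume $S$ connected, in which case its complementary components induce a partition of $\Ends(\Sigma)$ into finitely many clopen sets.

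Second, I would extract the geometric consequence of self-similarity. Applying Definition \ref{DEF:SelfsimilarEndsSpace} to the clopen partition of $\Ends(\Sigma)$ coming from the complementary components of $S$, there is a clopen set $D$ contained in one such component $C$ with $(D,D\cap \Ends_\infty(\Sigma))$ homeomorphic to $(\Ends(\Sigma),\Ends_\infty(\Sigma))$. Here the hypothesis $g(\Sigma)\in\{0,\infty\}$ enters: it guarantees that $D$ can be realized as the end space of a subsurface $\Sigma'\subseteq C\subseteq\Sigma\setminus S$ with a single boundary curve and genus equal to that of $\Sigma$, so that $\Sigma'$ is homeomorphic to $\Sigma$ with one open disk removed---a faithful copy of the whole surface sitting in the complement of $S$. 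If the genus were finite and nonzero, no proper subsurface could carry it, $S$ would become nondisplaceable in the sense of Definition \ref{DEF:Non_DisplaceableSubsurfaces}, and Theorem \ref{TEO:NonDisplaceableImpliesNotGloballyCB} would obstruct global CB; this is precisely why the genus restriction is imposed.

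The third step runs a displacement-and-swindle argument. Iterating the construction inside the copy $\Sigma'$ produces a nested family of copies of $\Sigma$ shrinking to a single end, together with a homeomorphism $t\in\Map(\Sigma)$---a ``shift''---carrying each copy onto the next, so that for any fixed off-$S$ region the supports of its $t$-translates are pairwise disjoint and leave every finite-type subsurface. For a homeomorphism $g$ supported off $S$, the infinite product $G=\prod_{n\ge 0}t^{n}g t^{-n}$ then converges to an element of $\mathcal{V}_S$ and satisfies the swindle identity $g=G\,(tGt^{-1})^{-1}$, exhibiting $g$ as a length-four word in $t^{\pm 1}$ and two elements of $\mathcal{V}_S$. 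To treat an arbitrary $f$, I would use the copy $\Sigma'$ and the finitely many complementary components of $S$ to fragment $f$, modulo $\mathcal{V}_S$ and a fixed finite set $F$ of shift-type maps, into homeomorphisms each supported in a single copy of $\Sigma$ disjoint from $S$, to which the swindle applies. Crucially, the copies, the shift $t$, and the set $F$ are chosen once and for all from $S$ alone, so the resulting word length $k$ does not depend on $f$.

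The principal difficulty is exactly this uniformity. Topologically, $\mathcal{V}_S$ together with a shift generates $\Map(\Sigma)$ without much effort, but coarse boundedness demands that every mapping class be assembled in the same bounded number of moves; making the fragmentation of the third step uniform in $f$---so that neither the finite set $F$ nor the word length grows with $f$---is where the self-similar ``room at infinity'' must be exploited with care. A subordinate, purely topological point is the realization in the second step of the abstract clopen pair $(D,D\cap\Ends_\infty(\Sigma))$ as a genuine subsurface of the correct genus, via the classification of infinite-type surfaces; this is the only step that invokes the dichotomy $g(\Sigma)\in\{0,\infty\}$, and it is exactly what fails for finite nonzero genus.
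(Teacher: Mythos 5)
First, a point of orientation: the paper does not prove this statement at all --- it is imported verbatim from \cite[Proposition 3.1]{MR2019} and used as a black box (for instance in the proof of Theorem \ref{TEO:Classification_CB_Maps_OneMaximalEnd}). So your proposal has to be measured against the Mann--Rafi argument itself, and against that standard it has a genuine gap: the entire content of the proposition is concentrated in your third step, which you assert rather than prove. Writing an arbitrary $f\in\Map(\Sigma)$, modulo $\mathcal{V}_S$ and a fixed finite set $F$, as a product of a \emph{uniformly} bounded number of pieces supported in prescribed regions disjoint from $S$ is not a routine ``fragmentation'': mapping class groups have no fragmentation property (an arbitrary mapping class does not factor into pieces of controlled support), and any correct argument must instead exploit that $\mathcal{V}_S$ is itself enormous --- it contains every mapping class supported in a complementary component of $S$ --- together with change-of-coordinates moves supplied by self-similarity, to put an arbitrary $g$ into standard position in a bounded number of steps. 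You correctly identify uniformity as ``the principal difficulty,'' but flagging the difficulty is not the same as resolving it; as written, the proof of the only nontrivial step is missing.

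Second, the swindle, as you have aimed it, cannot fill that gap. You apply it to ``a homeomorphism $g$ supported off $S$'' --- but any such $g$ already lies in $\mathcal{V}_S$ by definition, so exhibiting it as a length-four word in $t^{\pm1}$ and elements of $\mathcal{V}_S$ accomplishes nothing; the elements that actually need to be reached are those that move $S$. Worse, the pieces your fragmentation is supposed to produce --- ``homeomorphisms each supported in a single copy of $\Sigma$ disjoint from $S$'' --- are in general not swindlable: the swindle requires the support region to admit pairwise disjoint translates under some $t$, and a copy of $\Sigma$ need not admit any. Take $E(\Sigma)\cong\omega^\omega+1$ with genus zero (a self-similar end space, hence covered by the proposition, and appearing as an example in Section \ref{LastSe} of the paper): every clopen copy of $E(\Sigma)$ inside $E(\Sigma)$ must contain the unique point of maximal Cantor--Bendixson rank, so any two embedded copies of $\Sigma$ are both neighborhoods of the unique maximal end and therefore intersect. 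No homeomorphism $t$ can make the translates of such a copy disjoint, so the identity $g=G\,(tGt^{-1})^{-1}$ cannot even be set up; displacement arguments only apply to pieces supported \emph{away} from neighborhoods of the maximal ends, which is the opposite of what your plan produces. (A smaller but real issue of the same kind: the existence of your global shift $t$ with $t(\Sigma_n)=\Sigma_{n+1}$ forces $\Sigma\setminus\Sigma_0\cong\Sigma\setminus\Sigma_1$, an absorption property of self-similar end spaces that itself needs proof.) Your first two steps --- Rosendal's criterion, and the use of the genus dichotomy to realize the clopen copy $D$ as a subsurface copy $\Sigma'$ of $\Sigma$, including the correct explanation of why finite nonzero genus is excluded --- are sound and do match the ingredients of \cite{MR2019}; the argument breaks precisely at the point where it would have to start doing work.
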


\begin{lemma}[Lemma 4.12, \cite{MR2019}]\label{LEMMA:UniqueMaximalEndSelsimilarity}
    Suppose $\Sigma$ is an infinite-type surface with a unique maximal end and such that it has no nondisplaceable finite-type subsurfaces. Then $E(\Sigma)$ is self-similar.
\end{lemma}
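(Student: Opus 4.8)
Write $x$ for the unique maximal end. The plan is to reduce self-similarity of $(\Ends(\Sigma),\Ends_\infty(\Sigma))$ to a single \emph{absorption statement} about clopen neighborhoods of $x$, and then to prove that statement by playing the maximality of $x$ off against the displaceability of finite-type subsurfaces. First I would record three structural facts. Because $x$ is the unique maximal end it is in fact the maximum of $\prec$ (so $y\preceq x$ for every end $y$) and it is the only end of its type; hence every homeomorphism of $\Sigma$ fixes $x$, and by the definition of $\preceq$ every neighborhood of $x$ contains a homeomorphic copy of a neighborhood of each end $y$ (call this the \emph{absorbing property}). Next, the hypothesis that $\Sigma$ has no nondisplaceable finite-type subsurface forces $g(\Sigma)\in\{0,\infty\}$: a surface of finite positive genus carries all its handles on one compact subsurface, and two disjoint copies of such a subsurface would exhibit too much genus, so that subsurface would be nondisplaceable. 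Finally, when $g(\Sigma)=\infty$ I would show $x\in\Ends_\infty(\Sigma)$; otherwise $x$ has a planar neighborhood, into which the absorbing property pushes a copy of an infinite-genus neighborhood of some end accumulated by genus, contradicting that homeomorphisms preserve genus. Thus every neighborhood of $x$ has the same genus ($0$ or $\infty$) as $\Sigma$.

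The reduction goes as follows. Let $\Ends(\Sigma)=E_1\sqcup\cdots\sqcup E_n$ be a clopen decomposition, with $x\in E_i$. If $D$ is clopen and $(D,D\cap\Ends_\infty(\Sigma))\cong(\Ends(\Sigma),\Ends_\infty(\Sigma))$, then by the convention that any homeomorphism of nested subsets of $\Ends(\Sigma)$ is induced by a homeomorphism of $\Sigma$, there is $h$ with $\hat h(D)=\Ends(\Sigma)$; since $\hat h$ preserves $\prec$-types, $\hat h^{-1}(x)\in D$ has type $x$ and so equals $x$. Hence $x\in D\subseteq E_i$, so the only possible host is the piece containing $x$. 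It therefore suffices to prove: \emph{for every clopen neighborhood $U$ of $x$ there is a clopen $D\subseteq U$ with $(D,D\cap\Ends_\infty(\Sigma))\cong(\Ends(\Sigma),\Ends_\infty(\Sigma))$}; applying this with $U=E_i$ finishes the proof.

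To prove this absorption statement I would realize $U$ by a separating curve, writing $\Sigma=\overline{A}\cup\overline{B}$ with $\Ends(\overline{A})=U\ni x$ and $\Ends(\overline{B})=\Ends(\Sigma)\setminus U$, and then embed a full copy of $\Sigma$ into the $U$-side $\overline{A}$ with clopen image-ends. The mechanism is an exhaustion $K_1\subseteq K_2\subseteq\cdots$ of $\Sigma$ by connected finite-type subsurfaces together with a compatible sequence of embeddings $\Phi_m\colon K_m\to\overline{A}$: at each stage displaceability supplies a homeomorphism moving the next finite-type piece off the current image, providing fresh room inside $\overline{A}$ (here the connectedness of the complement of a single component, combined with the fact that $x$ is fixed, is used to control where pieces land), while the absorbing property of $x$ guarantees that every end-type occurring in $\overline{B}$ can be parked disjointly near $x$, and the genus computation above guarantees the genus matches. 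Arranging the images to be disjoint with isolated boundary, their union is a subsurface $\Sigma'\subseteq\overline{A}$ with $\Sigma'\cong\Sigma$ and $\Ends(\Sigma')=:D$ clopen in $U$; by Richards' classification \cite{Richards1963} this yields $(D,D\cap\Ends_\infty(\Sigma))\cong(\Ends(\Sigma),\Ends_\infty(\Sigma))$.

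The hard part will be this last construction. The obstacle is not any single displacement but the \emph{simultaneous} matching of all the data: one must realize every end-type of $\Sigma$ inside a neighborhood of $x$, reproduce the closed subset $\Ends_\infty(\Sigma)$ faithfully (this is exactly why the reductions $g(\Sigma)\in\{0,\infty\}$ and $x\in\Ends_\infty(\Sigma)$ when $g(\Sigma)=\infty$ are needed), and keep the image's end set clopen. I expect the inductive bookkeeping that keeps the successive images disjoint while forcing every type and the genus to appear in the limit to be the technical heart of the argument. I note that no Cantor--Bernstein step is required: self-similarity only asks for one clopen copy inside a piece, so a single embedding $\Sigma\hookrightarrow\overline{A}$ suffices and we never need a homeomorphism of $U$ onto all of $\Ends(\Sigma)$.
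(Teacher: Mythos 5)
A preliminary remark: the paper does not actually prove this statement---it is quoted from \cite[Lemma 4.12]{MR2019} and used as a black box---so your proposal can only be measured against Mann--Rafi's argument and on its own terms. Your structural observations are correct: with a unique maximal end $x$ one has $y\preceq x$ for every end $y$, every homeomorphism of $\Sigma$ fixes $x$, the absence of nondisplaceable finite-type subsurfaces forces $g(\Sigma)\in\{0,\infty\}$, and $x\in\Ends_\infty(\Sigma)$ when $g(\Sigma)=\infty$. The reduction to the absorption statement (every clopen neighborhood $U$ of $x$ contains a clopen $D$ with $(D,D\cap\Ends_\infty(\Sigma))\cong(\Ends(\Sigma),\Ends_\infty(\Sigma))$) is also valid: one simply applies it to the piece $E_i$ containing $x$. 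However, your justification of the ``only possible host'' claim is broken: there is never a homeomorphism $h$ of $\Sigma$ with $\hat h(D)=\Ends(\Sigma)$ for a \emph{proper} clopen subset $D$, because $\hat h$ is a bijection of $\Ends(\Sigma)$ onto itself. (The claim that any clopen copy of the end space must contain $x$ is in fact true, but it needs the local characterization of equivalence of ends from \cite{MR2021two}; fortunately it is not needed for your reduction.)

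The genuine gap is that the absorption statement is the entire content of the lemma, and your plan for proving it would not survive being made precise. Two concrete failure points. First, displaceability only supplies, for each finite-type $S$, \emph{some} $f$ with $f(S)\cap S=\emptyset$, with no control on where $f(S)$ lands; extracting control leads to the dichotomy proved in the necessity part of Theorem \ref{TEO:BigMapsLocallyCB1maxEnd}: either $f(\Sigma_0)\subseteq U$ or $f(\Sigma\setminus U)\subseteq U$. Your sketch tacitly relies on the first horn: a copy of the whole $\Sigma_0$-side inside each smaller neighborhood of $x$ is exactly what allows finitely many displaced copies to be made pairwise disjoint (push each new copy into a smaller neighborhood missing the previous images---this is the hypothesis engineered into Lemma \ref{LEMMA:StableNbhdImpliesCopiesOfComplementsInside} and the mechanism in the proof of Theorem \ref{TEO:OneMaximalEndAndLocallyCBimpliesSpaceEndsSelfSimilar}), and it is also what lets the assembly terminate after finitely many clopen pieces, the $\Sigma_0$-copy carrying the accumulation at $x$. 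But the first horn does \emph{not} follow from the hypotheses of the lemma: for the genus-zero surface with $\Ends(\Sigma)\cong\omega^\omega+1$, which has a unique maximal end and no nondisplaceable finite-type subsurfaces, no homeomorphism maps $\Sigma_0$ into a neighborhood $U$ with $\Ends(\Sigma)\setminus\Ends(U)$ infinite (count ends: $\hat f$ is a bijection, so the complement of $\hat f(\Ends(\Sigma_0))$ is finite). There only the second horn occurs, and it needs a genuinely different argument---note that the paper itself, in Lemma \ref{LEMMA:SufficientToBeGloballyCB}, disposes of that horn by invoking precisely the lemma you are trying to prove, so no circular shortcut is available. Second, your fallback of an infinite exhaustion is defective as a construction: pairwise disjoint images have a disconnected union, and an increasing union of displaced finite-type pieces is a subsurface with non-compact boundary (infinitely many boundary curves), whose end space does not sit as a clopen subset of $\Ends(\Sigma)$, so Richards' classification cannot be invoked in the form you use. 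A complete proof must produce, inside the $U$-side, a subsurface with compact boundary realizing the full pair $(\Ends(\Sigma),\Ends_\infty(\Sigma))$, and must handle both horns of the displacement dichotomy; that is exactly where Mann--Rafi's proof does its work and where your proposal stops.
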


\begin{theorem}\label{TEO:Classification_CB_Maps_OneMaximalEnd}
    Let $\Sigma$ be an infinite-type surface with zero or infinite genus and with one maximal end. Then $\Map(\Sigma)$ is globally CB if and only if $E(\Sigma)$ is self-similar. 
\end{theorem}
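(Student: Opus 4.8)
The plan is to deduce the theorem as a clean synthesis of the three results recalled just above: Proposition~\ref{PROP:SelfsimilarityImpliesCB}, Lemma~\ref{LEMMA:UniqueMaximalEndSelsimilarity}, and Theorem~\ref{TEO:NonDisplaceableImpliesNotGloballyCB}. The key conceptual point is that, for a surface with a unique maximal end, self-similarity of $E(\Sigma)$ is governed entirely by the absence of nondisplaceable finite-type subsurfaces, and the hypothesis of zero or infinite genus is exactly what lets us convert self-similarity into global coarse boundedness.

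For the ``if'' direction, I would argue directly. Assume $E(\Sigma)$ is self-similar. Since $\Sigma$ has zero or infinite genus by hypothesis, Proposition~\ref{PROP:SelfsimilarityImpliesCB} applies verbatim and yields that $\Map(\Sigma)$ is globally CB. This direction requires no argument beyond checking that the genus hypothesis is in force, which is guaranteed by the statement.

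For the ``only if'' direction, I would proceed by contraposition. Suppose $E(\Sigma)$ is \emph{not} self-similar. Because $\Sigma$ has a unique maximal end, Lemma~\ref{LEMMA:UniqueMaximalEndSelsimilarity} tells us that if $\Sigma$ had no nondisplaceable finite-type subsurface then $E(\Sigma)$ would be self-similar; taking the contrapositive of that implication, the failure of self-similarity forces $\Sigma$ to contain a nondisplaceable finite-type subsurface $K$. Now Theorem~\ref{TEO:NonDisplaceableImpliesNotGloballyCB}, which carries no restriction on the genus, immediately gives that $\Map(\Sigma)$ is not globally CB. Hence global CB of $\Map(\Sigma)$ implies self-similarity of $E(\Sigma)$, as desired.

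I do not expect a genuine obstacle here, since the result is a formal combination of previously established facts; the only points requiring care are bookkeeping. First, I would verify that the genus hypothesis is invoked only where it is actually needed, namely in the ``if'' direction through Proposition~\ref{PROP:SelfsimilarityImpliesCB}, whereas the ``only if'' direction uses the unique maximal end hypothesis (via Lemma~\ref{LEMMA:UniqueMaximalEndSelsimilarity}) together with the genus-free Theorem~\ref{TEO:NonDisplaceableImpliesNotGloballyCB}. Second, I would make sure to state the contrapositive of Lemma~\ref{LEMMA:UniqueMaximalEndSelsimilarity} correctly: it is the nonexistence of nondisplaceable finite-type subsurfaces that guarantees self-similarity, so the negation of self-similarity produces such a subsurface. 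With these two checks in place, the equivalence follows.
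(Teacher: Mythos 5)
Your proposal is correct and matches the paper's proof: both directions combine the same three ingredients in the same way, with Proposition~\ref{PROP:SelfsimilarityImpliesCB} handling sufficiency and the pair Theorem~\ref{TEO:NonDisplaceableImpliesNotGloballyCB} plus Lemma~\ref{LEMMA:UniqueMaximalEndSelsimilarity} handling necessity. The only cosmetic difference is that you phrase the necessity direction as a contraposition starting from the failure of self-similarity, whereas the paper chains the contrapositives in the other order, which is logically the identical argument.
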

 
\begin{proof}
Suppose that $\Map(\Sigma)$ is globally CB. By Theorem \ref{TEO:NonDisplaceableImpliesNotGloballyCB}, $\Sigma$ does not have displaceable finite-type subsurfaces, then by Lemma \ref{LEMMA:UniqueMaximalEndSelsimilarity}, $E(\Sigma)$ is self-similar. The sufficiency part is given by Proposition \ref{PROP:SelfsimilarityImpliesCB}.    
\end{proof}


\begin{definition}
Suppose $\Sigma$ has a unique maximal end $x$ and $\alpha$ is a separating curve in $\Sigma$. 

\begin{itemize}
    \item The \textit{interior} of $\alpha$ is defined as the only connected component of $\Sigma \setminus \alpha$ that is a neighborhood of the unique maximal end $x$. We denote it by $\Int(\alpha)$. 
    \item The complement of $\Int(\alpha)\cup \alpha$ in $\Sigma$ is called the \textit{exterior} of $\alpha$ and it is denoted by $\Ext(\alpha)$.
\end{itemize}
\end{definition}

Observe that for each homeomorphism $f$ of $\Sigma$ $$\Int(f(\alpha))=f(\Int(\alpha))\hspace{0.3in}\mbox{and}\hspace{0.3in} \Ext(f(\alpha))=f(\Ext(\alpha)).$$

If $\Sigma$ has a unique maximal end $x$, then any neighborhood of $x$ contains a subsurface with one boundary component whose interior is a neighborhood of $x$. This fact is often used throughout the work. Recall that we are assuming that subsurfaces have separating boundary curves.    

\section{Proof of Theorem \ref{TEO:BigMapsLocallyCB1maxEnd}}

\noindent First we prove the necessity condition of Theorem \ref{TEO:BigMapsLocallyCB1maxEnd} and after three preparatory lemmas we give the proof of the sufficiency part. 

For the necessity part we use the following lemma that is a consequence of \cite[Lemma 5.2]{MR2019}.

\begin{lemma}\label{LEMMA:CRITERIONOTCB}
    Let $\Sigma$ be an infinite-type surface and $K$ be a finite-type subsurface of $\Sigma$. If $\mathcal{V}_K$ is CB then every finite-type subsurface $S$ (possibly disconnected) contained in $\Sigma \setminus K$ is $\Map(\Sigma)$-displaceable.      
\end{lemma}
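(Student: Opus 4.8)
The plan is to argue by contraposition: supposing that some finite-type subsurface $S\subseteq\Sigma\setminus K$ is \emph{not} $\Map(\Sigma)$-displaceable, I will deduce that $\mathcal{V}_K$ is not CB. The starting point is to recognize $\mathcal{V}_K$ as a mapping class group in its own right. Since $K$ is of finite type and every element of $\mathcal{V}_K$ fixes $K$ (hence $\partial K$) pointwise, no such element can interchange the connected components $R_0,\dots,R_m$ of $\overline{\Sigma\setminus K}$; restricting to the complement and extending by the identity over $K$ then yields a topological isomorphism $\mathcal{V}_K\cong\prod_{i}\Map(\overline{R_i},\partial)$, where each factor fixes its boundary pointwise. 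Note that $\overline{\Sigma\setminus K}$ is of infinite type because $\Sigma$ is and $K$ is of finite type, so at least one factor is a big mapping class group.

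The second step is to locate a nondisplaceable piece inside a single factor. As $\mathcal{V}_K\leq\Map(\Sigma)$, the subsurface $S$ is in particular nondisplaceable by $\mathcal{V}_K$. Writing $S=\bigsqcup_i S_i$ with $S_i:=S\cap\overline{R_i}$ and using that the factors $\Map(\overline{R_i},\partial)$ act independently on the disjoint pieces $R_i$, an element $(f_i)_i$ displaces $S$ exactly when each $f_i$ displaces $S_i$; hence nondisplaceability of $S$ forces some nonempty $S_{i_0}$ to be nondisplaceable by $\Map(\overline{R_{i_0}},\partial)$. If $\overline{R_{i_0}}$ is of infinite type, then Theorem \ref{TEO:NonDisplaceableImpliesNotGloballyCB} (equivalently \cite[Lemma 5.2]{MR2019}), applied to the connected surface $\overline{R_{i_0}}$, shows that $\Map(\overline{R_{i_0}},\partial)$ is not globally CB; if instead $\overline{R_{i_0}}$ is of finite type, then $\Map(\overline{R_{i_0}},\partial)$ is an infinite, finitely generated discrete group, which is again not CB.

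Finally I would transport this failure back to $\Map(\Sigma)$. The coordinate projection $\mathcal{V}_K\to\Map(\overline{R_{i_0}},\partial)$ is a continuous surjective homomorphism, and continuous homomorphisms send CB sets to CB sets; since the target is not CB, $\mathcal{V}_K$ is not globally CB as a topological group, and because $\mathcal{V}_K$ is an open (hence coarsely embedded) subgroup of $\Map(\Sigma)$, it is not CB as a subset of $\Map(\Sigma)$ either, which is the contrapositive we wanted. I expect the main difficulty to be the bookkeeping forced by disconnectedness: proving carefully that $\mathcal{V}_K$ splits as the stated product (that holding $K$ fixed genuinely rigidifies the components), and that nondisplaceability of the disjoint union $S$ descends to a single connected-surface factor, so that Theorem \ref{TEO:NonDisplaceableImpliesNotGloballyCB} becomes applicable. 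A secondary point worth spelling out is the passage between ``$\mathcal{V}_K$ CB as a subset of $\Map(\Sigma)$'' and ``$\mathcal{V}_K$ globally CB as a group''; invoking \cite[Lemma 5.2]{MR2019} directly in the ambient group, with all witnessing homeomorphisms chosen with support in $\Sigma\setminus K$ and hence lying in $\mathcal{V}_K$, gives a route that sidesteps this equivalence altogether.
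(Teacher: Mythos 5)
The paper offers no self-contained argument for this lemma: it is quoted as a direct consequence of \cite[Lemma 5.2]{MR2019}, whose proof runs the nondisplaceable-subsurface/length-function machinery behind Theorem \ref{TEO:NonDisplaceableImpliesNotGloballyCB} with the witnesses of unboundedness supported in $\Sigma\setminus K$, hence lying in $\mathcal{V}_K$ --- essentially the alternative you gesture at in your final sentence. Your self-contained route is therefore genuinely different, and two of its three steps are sound: the reduction of nondisplaceability to a single complementary piece is correct and uses only the ``gluing'' direction of your decomposition (boundary-fixing homeomorphisms of the pieces $\overline{R_i}$ assemble to elements of $\mathcal{V}_K$, and distinct pieces have disjoint closures under the paper's convention that subsurface boundary curves are separating), and the coarse bookkeeping at the end (for the open subgroup $\mathcal{V}_K$, being CB as a subset of $\Map(\Sigma)$ agrees with being CB as a topological group, and CB pushes forward under continuous homomorphisms) is also correct.

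The genuine gap is the claimed topological isomorphism $\mathcal{V}_K\cong\prod_i\Map(\overline{R_i},\partial)$, on which your last step entirely depends. The gluing homomorphism $\Phi\colon\prod_i\Map(\overline{R_i},\partial)\to\Map(\Sigma)$ is surjective onto $\mathcal{V}_K$, but it can fail to be injective, and then the ``coordinate projection'' $[f]\mapsto[f\vert_{\overline{R_{i_0}}}]$ is not well defined on mapping classes. Concretely, under the paper's conventions this failure occurs exactly when $K$ is an annulus, a case the lemma permits: its two boundary curves $a,b$ are then isotopic in $\Sigma$, so the twists $T_a$ (supported in $\overline{R_1}$) and $T_b$ (supported in $\overline{R_2}$) represent the \emph{same} element of $\mathcal{V}_K$ while their restrictions to $\overline{R_1}$ are not isotopic rel boundary; equivalently, $(T_a,T_b^{-1})$ is a nontrivial element of $\ker\Phi$. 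You cannot sidestep the projection by using the inclusion of the factor instead, because non-CB groups can sit inside $\Map(\Sigma)$ as CB subsets; only the push-forward direction works, so well-definedness of the projection is precisely what carries the argument. Even for non-annular $K$, injectivity of $\Phi$ is not mere bookkeeping: it is the infinite-type analogue of the kernel computation for subsurface inclusions \cite[Theorem 3.18]{FM12}, and must be proved or cited. Both defects are repairable --- establish injectivity for non-annular $K$, and reduce annular $K$ to that case by enlarging $K$ to a non-annular finite-type subsurface disjoint from $S$, which is legitimate since subsets of CB sets are CB --- and one smaller patch is needed where you apply Theorem \ref{TEO:NonDisplaceableImpliesNotGloballyCB} to the bordered surface $\overline{R_{i_0}}$: that theorem is stated for borderless surfaces and for nondisplaceability under \emph{all} homeomorphisms, whereas you only know nondisplaceability under boundary-fixing ones; a collar adjustment bridges this, but it should be said.
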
  

\medskip

\noindent \textit{Proof of the necessity part of Theorem \ref{TEO:BigMapsLocallyCB1maxEnd}}. We assume that $\Map(\Sigma)$ is locally CB. Let $V$ be a CB neighborhood of the identity in $\Map(\Sigma)$. Take a connected finite-type subsurface $K$ of $\Sigma$ such that $\mathcal{V}_K\subseteq V$. We have that $\mathcal{V}_K$ is CB. By enlarging $K$ (and therefore, shrinking $\mathcal{V}_K$) if it were necessary, we can assume that $K$ satisfies item (1), that is, the closure of each complementary component of $K$ in $\Sigma$ is of infinite-type with one boundary component either with zero or infinite genus. Without loss of generality, we can assume that the unique maximal end $x$ is an end of $\Sigma_0$. 

Now we prove item (2). Let $U$ be the interior of a connected subsurface of $\Sigma_0$ with one boundary component that is a neighborhood of $x$ in $\Sigma$. If $U$ is isotopic $\Sigma_0$ it is enough to find $f_U$ isotopic to $Id_\Sigma$ such that $f_U(U)=\Sigma_0$. Now, suppose that $U\subseteq \Sigma_0$ is not isotopic to $\Sigma_0$. Then there is a pair of pants $P\subset \Sigma_0$ such that $\partial U \sqcup \partial \Sigma_0 \subseteq \partial P$ and $\Sigma\setminus P=(\Sigma\setminus \overline{\Sigma_0})\sqcup U \sqcup W$. As $\mathcal{V}_K$ is CB, by Lemma \ref{LEMMA:CRITERIONOTCB} there is a homeomorphism $f$ such that $f(P)\cap P=\emptyset$. 

We claim that, up to replacing $f$ by its inverse, we can assume that $f(P)\subset U$. Indeed, observe that either $f(P)\subset U$ or $f(P)\subset \Sigma\setminus U$. If $f(P)\subset \Sigma\setminus U$ then $P\subset \Int(f(\partial U))$. Given that $U=\Int(\partial U)$ and $U$ is a neighborhood of the unique maximal end then $\Int(f(\partial U))=f(\Int(\partial U))=f(U)$; then $f^{-1}(P)\subset U$.

Assume $f(P)\subseteq U$ and we set $f_U:=f$. Again, as $U$ is a neighborhood of $x$ and $\partial U$ is a separating curve in $\Sigma$, there are two possibilities for $f_U(\partial U)$: either $\Int(f_U(\partial U))\subseteq U$ or $\Ext(f_U(\partial U))\subseteq U$. Since $f_U(\Sigma_0)$ is a neighborhood of the unique maximal end $x$ and $\Int(\partial U)\subseteq \Int(\partial \Sigma_0)$, if $\Int(f_U(\partial U))\subseteq U$ then necessarily $\Int(f_U(\partial \Sigma_0))\subseteq U$. In this case we obtain that $f(\Sigma_0)\subseteq U$ because $\Sigma_0=\Int(\partial \Sigma_0)$, see Figure \ref{Fig:2posibilities} a). Finally, since $\Ext(\partial U)=\Sigma \setminus (U\cup \partial U)$ and $\Ext(f_U(\partial U))=f_U(\Ext(\partial U))$, if $\Ext(f_U(\partial U))\subseteq U$ then  
$f_U(\Sigma \setminus U)\subseteq U$, see Figure \ref{Fig:2posibilities} b). In conclusion, either $f_U(\Sigma_0)\subseteq U$ or $f_U(\Sigma\setminus U)\subseteq U$.\qed

\begin{figure}[!ht]
\begin{center}
	\includegraphics[width=.7\textwidth]{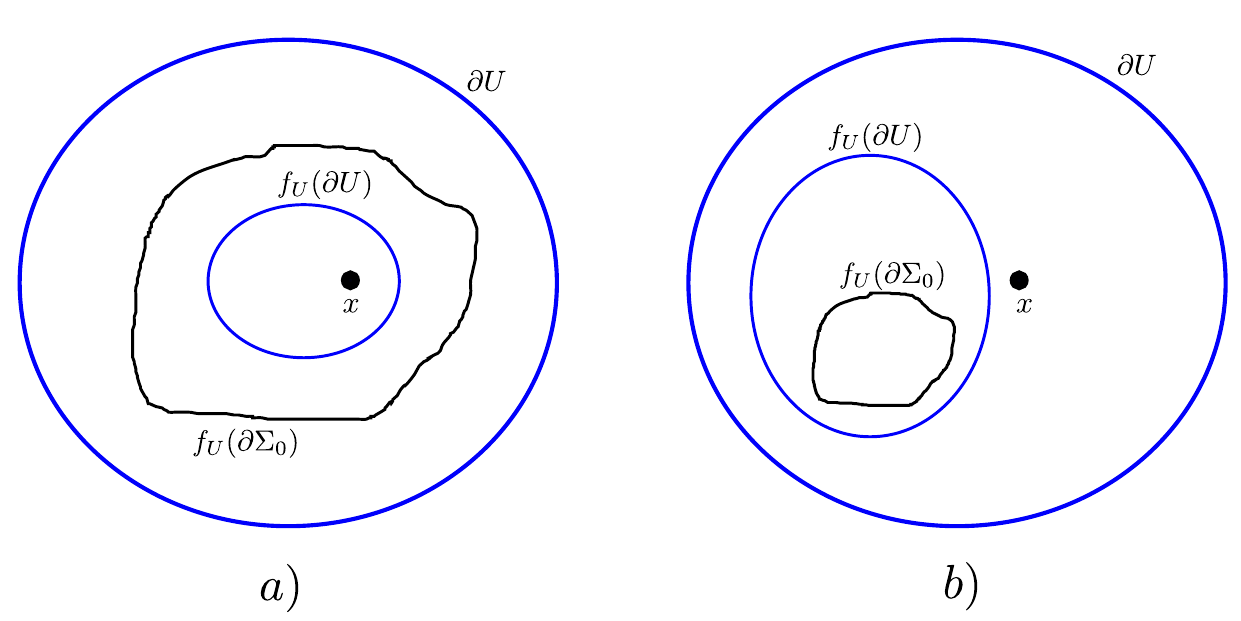}
	\caption{\small a) If $\Int(f_U(\partial U))\subseteq U$ then $f_U(\Sigma_0)\subseteq U$. b) If $\Ext(f_U(\partial U))\subseteq U$ then $f_U(\Sigma \setminus U)\subseteq U$.}
    \label{Fig:2posibilities}
\end{center}
\end{figure}

\medskip

For the proof of the sufficiency part of Theorem \ref{TEO:BigMapsLocallyCB1maxEnd} we use three lemmas. Assume that $\Sigma$ has a unique maximal end $x$ and let $K$ be a connected finite-type subsurface of $\Sigma$ with complementary subsurfaces $\Sigma_0, \Sigma_1,\ldots,\Sigma_n$, i.e, $$\Sigma\setminus K=\Sigma_0\sqcup \Sigma_1\sqcup\cdots \sqcup \Sigma_n.$$

Additionally, suppose that the closure in $\Sigma$ of each $\Sigma_i$ is an infinite-type surface of zero or infinite genus with one boundary component and that $\Sigma_0$ is a neighborhood of the unique maximal end $x$.

In what follows, the {\it support} of a homeomorphism $f:\Sigma \rightarrow \Sigma$, denoted by $\mathrm{supp}(f)$, is defined as the closure in $\Sigma$ of the set $\{s\in \Sigma \mid f(s)\neq s \}$.

\begin{lemma}\label{LEMMA:StableNbhdImpliesCopiesOfComplementsInside}
    Suppose $U\subseteq \Sigma_0$ is a neighborhood of $x$ such that for each subsurface $\widetilde{U}\subseteq U$ that is a neighborhood of $x$ there is a homeomorphism $f_{\widetilde{U}}$ such that $f_{\widetilde{U}}(\Sigma_0)\subseteq \widetilde{U}$. Then for each $1\leq i \leq n$ there exists a homeomorphism $f_i$ such that $f_i(\Sigma_i)\subseteq U$.
\end{lemma}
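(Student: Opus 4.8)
The plan is to isolate the role of the stability hypothesis and use it only once, at the very end. Applying the hypothesis with $\widetilde{U}=U$ produces a homeomorphism $f_U\in\Homeo^+(\Sigma)$ with $f_U(\Sigma_0)\subseteq U$. Thus, if for a fixed $i\in\{1,\dots,n\}$ I can produce $h_i\in\Homeo^+(\Sigma)$ with $h_i(\Sigma_i)\subseteq\Sigma_0$, then $f_i:=f_U\circ h_i$ satisfies $f_i(\Sigma_i)\subseteq f_U(\Sigma_0)\subseteq U$, which is what we want. So the whole problem reduces to the hypothesis-free statement that each complementary piece $\Sigma_i$ can be mapped inside the neighborhood $\Sigma_0$ of the maximal end.

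To build $h_i$, I first record what maximality gives. Since $x$ is the unique maximal end, every homeomorphism of $\Sigma$ fixes $x$ and every end $y\neq x$ satisfies $y\prec x$. In particular, if $\Sigma_i$ has infinite genus it has an end $z\in\Ends_\infty(\Sigma)$, and from $z\prec x$ every neighborhood of $x$ contains a homeomorphic copy of a neighborhood of $z$, hence contains infinite genus; therefore $x\in\Ends_\infty(\Sigma)$ and $\Sigma_0$ has infinite genus. This genus observation guarantees that the genus of $\Sigma_i$ (which is $0$ or $\infty$) can always be accommodated by a subsurface of $\Sigma_0$.

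The core step is to realize $\Ends(\Sigma_i)$ as a clopen subset of $\Ends(\Sigma_0)$ with the correct nested structure. The plan is to choose a finite clopen partition $\Ends(\Sigma_i)=N_1\sqcup\cdots\sqcup N_m$ together with ends $y_k\in N_k$; since $y_k\prec x$, the definition of $\prec$ provides a neighborhood $U_{y_k}$ of $y_k$ and, for any prescribed neighborhood of $x$, a homeomorphism of $\Sigma$ carrying $U_{y_k}$ into it. Refining the partition so that $N_k\subseteq U_{y_k}$, I obtain for each $k$ a homeomorphism $g_k$ with $g_k(N_k)$ contained in a chosen neighborhood of $x$ inside $\Sigma_0$; because $g_k$ preserves $\Ends_\infty(\Sigma)$, the copy $N_k':=g_k(N_k)$ is homeomorphic to $N_k$ as a nested pair. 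Taking the target neighborhoods of $x$ to be nested and shrinking, and using that each $N_k'$ is clopen and does not contain $x$, I can arrange the finitely many $N_1',\dots,N_m'$ to be pairwise disjoint. Their union $D:=N_1'\sqcup\cdots\sqcup N_m'$ is then a clopen subset of $\Ends(\Sigma_0)$ with $x\notin D$ and $(D,\,D\cap\Ends_\infty(\Sigma))$ homeomorphic to $(\Ends(\Sigma_i),\,\Ends(\Sigma_i)\cap\Ends_\infty(\Sigma))$.

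Finally I globalize. Let $S_D\subseteq\Sigma_0$ be a subsurface with $\Ends(S_D)=D$ and with genus equal to that of $\Sigma_i$, which is possible by the genus observation. Then $\Sigma_i$ and $S_D$ have homeomorphic nested end pairs, and so do their complements: because $D$ sits inside $\Ends(\Sigma_0)$ while the other pieces $\Ends(\Sigma_j)$ are untouched, one checks directly that $\Ends(\Sigma)\setminus\Ends(\Sigma_i)$ and $\Ends(\Sigma)\setminus D$ are homeomorphic nested pairs by sending $\Ends(\Sigma_i)$ to $D$ via the isomorphism above and extending by the identity on the remaining pieces. By the change of coordinates principle recalled in Section \ref{SECTION:Preliminaries}, there is a homeomorphism $h_i$ of $\Sigma$ with $h_i(\Sigma_i)=S_D\subseteq\Sigma_0$, and then $f_i:=f_U\circ h_i$ finishes the proof. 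I expect the main obstacle to be precisely this globalization: arranging the disjoint copies $N_k'$ and tracking $\Ends_\infty$ through each $g_k$ so that not only $D\cong\Ends(\Sigma_i)$ but the complementary nested pair and the genus data match, ensuring that the change of coordinates principle yields a genuine self-homeomorphism of $\Sigma$ rather than merely an embedding of $\Sigma_i$ into $\Sigma_0$.
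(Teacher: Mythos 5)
Your reduction step is fine: applying the hypothesis once with $\widetilde{U}=U$ to get $f_U(\Sigma_0)\subseteq U$, it suffices to produce $h_i$ with $h_i(\Sigma_i)\subseteq\Sigma_0$. The gap is your claim that this remaining statement is \emph{hypothesis-free}. It is not, and the error can be located precisely: you misquote the definition of $\preceq$. For $y_k\prec x$, the definition provides, \emph{for each} prescribed neighborhood $V$ of $x$, \emph{some} neighborhood $U_{y_k}$ of $y_k$ (depending on $V$) and a homeomorphism carrying $U_{y_k}$ into $V$; it does not provide a single fixed $U_{y_k}$ that can be sent into every prescribed neighborhood of $x$. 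Consequently your disjointness scheme breaks down: once you fix the finite partition $N_1\sqcup\cdots\sqcup N_m$ (chosen so that each $N_k$ maps into the first target), and then shrink the targets to nested neighborhoods of $x$ avoiding the images already placed, maximality only lets you map some possibly much smaller neighborhood of $y_{k+1}$ into the new, smaller target --- not the whole piece $N_{k+1}$. Re-covering the uncovered remainder restarts the same problem, and the recursion need not terminate; an infinite disjoint union of images is not a clopen copy of $\Ends(\Sigma_i)$. Note also that ``each $\Sigma_i$ maps into $\Sigma_0$'' is exactly condition \emph{ii)} of \cite[Theorem 1.4]{MR2019} as recalled in the Remark after Theorem \ref{TEO:BigMapsLocallyCB1maxEnd}: it is a condition that Mann and Rafi \emph{assume}, and the whole point of this lemma is to derive it from the stability-type hypothesis. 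Treating it as automatic for surfaces with a unique maximal end begs the question.

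The paper's proof uses the hypothesis at precisely the step you try to avoid, and this is the missing idea: all the images $h_j(N_j)$ lie in $\Ends(U)\subseteq\Ends(\Sigma_0)$, and the hypothesis produces, for any smaller neighborhood $\widetilde{U}\subseteq U$ of $x$, a \emph{single} homeomorphism $f_{\widetilde{U}}$ with $f_{\widetilde{U}}(\Sigma_0)\subseteq\widetilde{U}$. Such a map pushes \emph{uniformly} everything sitting in $\Sigma_0$ --- in particular the full image $h_{k+1}(N_{k+1})$ of the \emph{original} piece of the cover --- into $\widetilde{U}$, so one can inductively choose $\widetilde{U}_k$ disjoint from the images already placed and replace $h_{k+1}$ by $f_{\widetilde{U}_k}\circ h_{k+1}$ without ever shrinking the sources. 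This uniformity over a fixed set (all of $\Sigma_0$) is what raw maximality lacks, and it is why the hypothesis must be invoked for every piece of the cover, not just once at the outset. Your final globalization paragraph (realizing the clopen copy by a subsurface with matching genus and swapping via change of coordinates, or equivalently the paper's pair-of-pants trick) is essentially correct, but it rests on the disjoint assembly that the argument fails to deliver.
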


\begin{remark}
The hypothesis of Lemma \ref{LEMMA:StableNbhdImpliesCopiesOfComplementsInside} implies that $U$ is a stable neighborhood of $x$, see Definition \ref{DEF:StableNeigbTameness}. 
We point out that Lemma \ref{LEMMA:StableNbhdImpliesCopiesOfComplementsInside} can be obtained using \cite[Lemma 4.18]{MR2019}. Here we provide a self-contained proof.  
\end{remark}

\begin{proof} Let $1\leq i\leq n$ be fixed. Observe that $\Ends(U)$ contains a homeomorphic copy of $\Ends(\Sigma_i)$. Indeed, given that $U$ is a neighborhood of the unique maximal end $x$ and $\Ends(\Sigma_i)$ is a compact subset of $\Ends(\Sigma)$ then there is a finite collection $\{N_j\}_{j=1}^{m}$ of disjoint clopen subsets that covers $\Ends(\Sigma_i)$ and such that each $N_j$ is mapped inside $\Ends(U)$ by a homeomorphism $h_j$. Now, we use the hypothesis of the lemma to make the collection $\{h_j(N_j)\}_{j=1}^{m}$ disjoint inside of $\Ends(U)$.

Now, since $\Sigma_i$ has zero or infinite genus, we can find a homeomorphic copy $\Sigma_i^\prime$ of $\Sigma_i$ contained in $U$. In order to obtain the desired homeomorphism $f_i$, let $P \subseteq \Sigma$ be a pair of pants such that $\partial P$ contains the boundary curves of $\Sigma_i$ and  $\Sigma_i^\prime$, see Figure \ref{Fig:constructhomeo}. Then $f_i$ is the homeomorphism supported on $P\cup \Sigma_i \cup \Sigma_i^\prime$ and sends $\Sigma_i$ onto $\Sigma_i^\prime$. 

\begin{figure}[!ht]
\begin{center}
	\includegraphics[width=.50\textwidth]{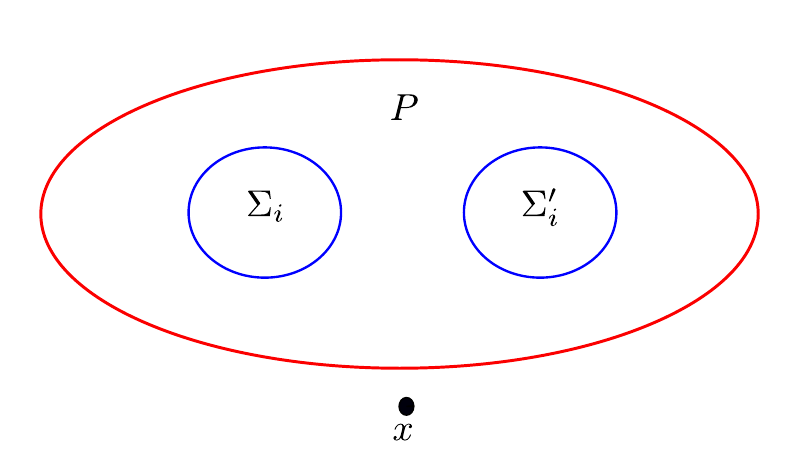}
	\caption{\small $\Sigma_i$ and $\Sigma_i^\prime$ are homeomorphic subsurfaces in $\Sigma$. Then there is a homeomorphism $f$ with support in $P\cup \Sigma_i \cup \Sigma_i^\prime$ that sends $\Sigma_i$ onto $\Sigma_i^\prime$.}
	\label{Fig:constructhomeo}
\end{center}
\end{figure}
  
\end{proof}


\begin{lemma}\label{LEMMA:SufficientToBeGloballyCB}
Let $W$ be a subsurface of $\Sigma$ that is a neighborhood of $x$ (may be equal to $\Sigma$) and suppose that for each subsurface $U\subseteq W$ that is a neighborhood of $x$ there is a homeomorphism $f_U$ such that $f_U(\Sigma \setminus U)\subseteq U$. Then $\Map(\Sigma)$ is globally CB.
\end{lemma}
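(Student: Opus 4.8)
The plan is to derive global CB-ness from Proposition \ref{PROP:SelfsimilarityImpliesCB}. That proposition requires two structural facts about $\Sigma$: that its genus is zero or infinite, and that $\Ends(\Sigma)$ is self-similar. Both will fall out of the engulfing hypothesis once it is applied to suitably chosen neighborhoods $U\subseteq W$ of the unique maximal end $x$. So the whole argument reduces to verifying these two facts.

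First I would prove self-similarity by way of displaceability. Let $S$ be any finite-type subsurface of $\Sigma$ (possibly disconnected, as in Definition \ref{DEF:Non_DisplaceableSubsurfaces}). Since $W$ is a neighborhood of $x$ and $S$ is finite-type, I can choose a subsurface $U\subseteq W$ that is a neighborhood of $x$ with $\overline{U}\cap S=\emptyset$; in particular $S\subseteq \Sigma\setminus U$. Applying the hypothesis to this $U$ gives $f_U$ with $f_U(\Sigma\setminus U)\subseteq U$, so $f_U(S)\subseteq U$ is disjoint from $S$. Hence every finite-type subsurface is displaceable, i.e. $\Sigma$ has no nondisplaceable finite-type subsurface, and Lemma \ref{LEMMA:UniqueMaximalEndSelsimilarity} yields that $\Ends(\Sigma)$ is self-similar.

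Next, the genus dichotomy. Suppose for contradiction that the genus of $\Sigma$ were finite and nonzero. Then no end is accumulated by genus, so all of the genus lies in some finite-type subsurface and $x$ admits genus-zero neighborhoods. I would pick $U\subseteq W$ a genus-zero neighborhood of $x$ disjoint from the genus-bearing region, so that $\Sigma\setminus U$ carries all of the positive genus. The engulfing map $f_U$ sends $\Sigma\setminus U$, and hence its handles, into the genus-zero surface $U$; since a homeomorphism preserves the genus of a subsurface, a positive-genus surface cannot embed into $U$, a contradiction. Therefore the genus of $\Sigma$ is zero or infinite, and Proposition \ref{PROP:SelfsimilarityImpliesCB} applies to conclude that $\Map(\Sigma)$ is globally CB.

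The conceptual point—and really the only place needing care—is the first step: once one recognizes that the engulfing property trivializes displaceability, everything else is bookkeeping plus an invocation of the cited results. The detail I would check carefully is the existence of the neighborhoods $U\subseteq W$ with the required disjointness (and genus-zero) properties; this rests only on $W$ being a neighborhood of the end $x$, on the finiteness of the obstruction ($S$, respectively the genus), and on the standard fact that subsurface neighborhoods of $x$ inside $W$ form a neighborhood basis of $x$. I do not expect a genuine obstacle here, since the hypothesis is tailored to make both structural facts immediate.
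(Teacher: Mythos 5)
Your proposal is correct and follows essentially the same route as the paper: both prove that every finite-type subsurface is displaceable by engulfing a neighborhood $U\subseteq W$ of $x$ disjoint from it, then invoke Lemma \ref{LEMMA:UniqueMaximalEndSelsimilarity} for self-similarity and conclude via Proposition \ref{PROP:SelfsimilarityImpliesCB} (the paper cites Theorem \ref{TEO:Classification_CB_Maps_OneMaximalEnd}, whose sufficiency direction is exactly that proposition). The only difference is cosmetic: the paper deduces the genus dichotomy implicitly from displaceability, while you give an explicit (and valid) contradiction argument using that a positive-genus subsurface cannot embed in a genus-zero one.
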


\begin{proof} We prove that any finite-type subsurface of $\Sigma$ is displaceable; in particular, $\Sigma$ has zero or infinite genus. Let $S$ be a finite-type subsurface of $\Sigma$. Then we can construct a connected subsurface $U_S\subseteq W$ that defines a neighborhood of $x$ and such that $U_S\cap S=\emptyset$. Applying the hypothesis of the lemma, there is a homeomorphism $f$ such that $f(\Sigma\setminus U_S)\subseteq U_S$. As $S\subseteq \Sigma \setminus U_S$ we obtain that $f(S)\cap S=\emptyset$.

Now, by Lemma \ref{LEMMA:UniqueMaximalEndSelsimilarity}, $\Ends(\Sigma)$ is self-similar and, by Theorem \ref{TEO:Classification_CB_Maps_OneMaximalEnd}, $\Map(\Sigma)$ is globally CB.
\end{proof}



\begin{lemma}\label{LEMMA:V_KisCB}
Let $T$ be a finite-type subsurface of $\Sigma$ containing $K$ and let $U_T$ be the only connected component of $\Sigma\setminus T$ which is a neighborhood of $x$. Suppose that there is a homeomorphism $f_0$ with $f_0(\Sigma_0)\subseteq U_T$, and assume that for each $1\leq i \leq n$ there is a homeomorphism $f_i$ with $f_i(\Sigma_i)\subseteq \Sigma_0$. Then $\mathcal{V}_K\subseteq (F\mathcal{V}_T)^{4n+2}$ where $F=\{f_i^{\pm 1}\}_{i=0}^n$.    
\end{lemma}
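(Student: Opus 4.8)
The plan is to show that every $f\in\mathcal{V}_K$ can be written as a product of exactly $4n+2$ elements of $F\mathcal{V}_T$, by decomposing $f$ across the complementary components of $K$ and then conjugating each piece so that its support is pushed into $U_T\subseteq\Sigma\setminus T$.

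First I would exploit that $f$ fixes $K$, and hence $\partial K=\bigsqcup_{i=0}^n\partial\Sigma_i$, pointwise. Choosing a representative homeomorphism $\phi$ with $\phi\vert_K=\mathrm{Id}$, it follows that $\phi(\Sigma_i)=\Sigma_i$ for every $i$, so $f$ splits as a commuting product $f=f^{(0)}f^{(1)}\cdots f^{(n)}$, where $f^{(i)}$ agrees with $\phi$ on $\overline{\Sigma_i}$ and is the identity elsewhere; the closures $\overline{\Sigma_i}$ are pairwise disjoint (each $\Sigma_i$ has a single boundary curve of $K$, and these are distinct), so the factors commute. Since $\phi$ fixes $\partial\Sigma_i$ pointwise, by an isotopy rel $\partial\Sigma_i$ I may take a representative of $f^{(i)}$ with $\supp(f^{(i)})\subseteq\Sigma_i$. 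It then suffices to express each $f^{(i)}$ as a bounded product of elements of $F\mathcal{V}_T$.

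The key step is a two-stage conjugation. For $1\le i\le n$, the map $f_0 f_i$ carries $\supp(f^{(i)})\subseteq\Sigma_i$ into $f_0(f_i(\Sigma_i))\subseteq f_0(\Sigma_0)\subseteq U_T\subseteq\Sigma\setminus T$; hence the conjugate $v_i:=f_0 f_i f^{(i)} f_i^{-1} f_0^{-1}$ has support disjoint from $T$ and so lies in $\mathcal{V}_T$. Solving for $f^{(i)}$ gives
$$f^{(i)}=f_i^{-1}f_0^{-1}\,v_i\,f_0 f_i=(f_i^{-1})\,(f_0^{-1}v_i)\,(f_0)\,(f_i),$$
a product of four elements of $F\mathcal{V}_T$ (using $\mathrm{Id}\in\mathcal{V}_T$ and $v_i\in\mathcal{V}_T$). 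For $i=0$ the component $\Sigma_0$ is already sent into $U_T$ by $f_0$ alone, so $v_0:=f_0 f^{(0)} f_0^{-1}\in\mathcal{V}_T$ and $f^{(0)}=(f_0^{-1}v_0)(f_0)$ is a product of only two elements of $F\mathcal{V}_T$.

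Concatenating these factorizations for $f=f^{(0)}f^{(1)}\cdots f^{(n)}$ yields $2+4n=4n+2$ elements of $F\mathcal{V}_T$, giving $\mathcal{V}_K\subseteq(F\mathcal{V}_T)^{4n+2}$; the count is uniform in $f$, so no difficulty arises from shorter words. I expect the main technical care to lie in the support bookkeeping: I must choose the representatives realizing the given containments $f_i(\Sigma_i)\subseteq\Sigma_0$ and $f_0(\Sigma_0)\subseteq U_T$ together with a representative of $f^{(i)}$ supported in the \emph{open} component $\Sigma_i$, so that $v_i$ is genuinely the identity on $T$ (not merely up to isotopy near $\partial T$). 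Once all supports are confined to the open complementary components as above, membership $v_i\in\mathcal{V}_T$ is automatic because $U_T$ is disjoint from $T$, and the regrouping into $F\mathcal{V}_T$-blocks is purely formal.
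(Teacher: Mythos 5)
Your proof is correct and takes essentially the same approach as the paper: decompose $f\in\mathcal{V}_K$ as a product of pieces supported in the components $\Sigma_i$, then conjugate the $\Sigma_0$-piece by $f_0$ and each $\Sigma_i$-piece ($1\le i\le n$) by $f_0f_i$ to push supports into $U_T\subseteq\Sigma\setminus T$, yielding the count $2+4n$. The paper phrases the $i\ge 1$ case as two nested conjugations (first by $f_i$ into $\Sigma_0$, then by $f_0$), which is algebraically identical to your single conjugation by $f_0f_i$; your write-up is just more explicit about the decomposition and support bookkeeping.
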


\begin{proof}
        Let $g\in \mathcal{V}_K$. Then $g=g_0g_1 \cdots g_n$ where each $g_i$ has its support in $\Sigma_i$. Observe that $f_0^{-1}(T)\subseteq \Sigma \setminus f_0^{-1}(U_T)\subseteq \Sigma\setminus \Sigma_0$. Since $g_0$ has its support on $\Sigma_0$ then $f_0g_0f_0^{-1}\in \mathcal{V}_T$. Therefore $g_0\in (F\mathcal{V}_T)^{2}$. 

    Now, let $1\leq i\leq n$. We notice that $f_ig_if_i^{-1}$ has its support on $\Sigma_0$. This is because $g_i$ has its support on $\Sigma_i$ and $\mathrm{supp}(f_ig_if_i^{-1})=f_i(\mathrm{supp}(g_i))$. As in the previous paragraph we have that $f_ig_if_i^{-1}\in (F\mathcal{V}_T)^2$ and therefore $g_i\in (F\mathcal{V}_T)^4$. Putting all together we obtain the desired result.
\end{proof}

\noindent \textit{Proof of the sufficient part of Theorem \ref{TEO:BigMapsLocallyCB1maxEnd}}. Let $K$ be a finite-type subsurface of $\Sigma$ satisfying the requirements of the statement of Theorem \ref{TEO:BigMapsLocallyCB1maxEnd}. We prove that $\mathcal{V}_K$ is CB. Let $V$ be an arbitrary neighborhood of the identity in $\Map(\Sigma)$. By Theorem \ref{TEO:Equivalence_CB}, we need to show that there exists a finite set $F\subset \Map(\Sigma)$ and $m\geq 1$ such that $\mathcal{V}_K\subseteq (FV)^m$.

Take $T$ a finite-type subsurface of $\Sigma$ where each boundary component is a separating curve, with $K\subseteq T$ and such that $\mathcal{V}_T\subseteq V\cap \mathcal{V}_K$. Define $U_T$ as the unique connected component of $\Sigma\setminus T$ that is a neighborhood of the unique maximal end $x$.

There are two cases for $U_T$:

\begin{itemize}
    \item [\emph{1)}] There is a connected subsurface $U\subseteq U_T$ that is a neighborhood of $x$ for which there exists a homeomorphism $f_U$ such that $f_U(\Sigma_0)\subseteq U$.
    \item[\emph{2)}] If item 1) does not hold then for every $U\subseteq U_T$ a connected subsurface with one boundary component and $x\in E(U)$ there is a homeomorphims $f_U$ with $f_U(\Sigma\setminus U)\subseteq U$.  
\end{itemize}

\noindent Suppose we are in item \emph{1)}. Applying again the hypothesis to $U$ we have two possibilities:

\begin{itemize}
    \item [\emph{i)}]  there is a connected subsurface $\widetilde{U}\subseteq U$ that is a neighborhood of $x$ for which there exists a homeomorphism $f_{\widetilde{U}}$ such that $f_{\widetilde{U}}(\Sigma \setminus \widetilde{U})\subseteq \widetilde{U}$.
    \item[\emph{ii)}] Item \emph{i)} does not hold. Then for every connected subsurface $\widetilde{U}\subseteq U$ that is a neighborhood of $x$ there is a homeomorphism $f_{\widetilde{U}}$ such that $f_{\widetilde{U}}(\Sigma_0)\subseteq \widetilde{U}$.
\end{itemize}

If item \emph{i)} holds, then letting $f_0:=f_U$ and $f_i:=f_{\widetilde{U}}$ for each $1\leq i \leq n$ in Lemma \ref{LEMMA:V_KisCB} we have that $\mathcal{V}_K\subseteq (F\mathcal{V}_T)^{4n+2}\subseteq (FV)^{4n+2}$ where $F:=\{f_i^{\pm 1}\}_{i=0}^{n}$.

Now, suppose item \emph{ii)} holds. We set $f_0:=f_U$ and we use Lemma \ref{LEMMA:StableNbhdImpliesCopiesOfComplementsInside} to obtain for each $1\leq i\leq n$ a homeomorphism $f_i$ such that $f_i(\Sigma_i)\subseteq U\subseteq \Sigma_0$. Applying Lemma \ref{LEMMA:V_KisCB} we conclude that $\mathcal{V}_K\subseteq (F\mathcal{V}_T)^{4n+2}\subseteq (FV)^{4n+2}$ where $F:=\{f_i^{\pm 1}\}_{i=0}^{n}$.

\noindent Finally, if item \emph{2)} holds then by Lemma \ref{LEMMA:SufficientToBeGloballyCB} $\Map(\Sigma)$ is globally CB and, in particular, it is locally CB. \qed


\section{Proof of Theorems \ref{TEO:OneMaximalEndAndLocallyCBimpliesSpaceEndsSelfSimilar} and \ref{TEO::ClassificationLocCBMAPsWithOneMaximalEnd}}

We use the following result.

\begin{lemma}[Lemma 4.10,\cite{MR2019}]\label{LEMMA:AltenativeOfSelfSimilar}
    $\Ends(\Sigma)$ is self-similar if and only if for any decomposition $\Ends(\Sigma)=A_1\sqcup A_2$ into clopen subsets there is some $A_i$ that contains a homeomorphic copy of $\Ends(\Sigma)$.
\end{lemma}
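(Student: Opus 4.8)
\textbf{Plan for proving Lemma \ref{LEMMA:AltenativeOfSelfSimilar}.}

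The statement is a two-clopen-piece reformulation of self-similarity, and the plan is to show that the general finite decomposition in Definition \ref{DEF:SelfsimilarEndsSpace} can be reduced to the binary case. The forward direction is essentially immediate: if $\Ends(\Sigma)$ is self-similar and we are given a decomposition $\Ends(\Sigma)=A_1\sqcup A_2$ into two clopen pieces, then this is in particular a finite decomposition into pairwise disjoint clopen sets, so by Definition \ref{DEF:SelfsimilarEndsSpace} there is a clopen $D$ inside some $A_i$ with $(D,D\cap \Ends_\infty(\Sigma))$ homeomorphic to $(\Ends(\Sigma),\Ends_\infty(\Sigma))$; in particular $A_i$ contains a homeomorphic copy of $\Ends(\Sigma)$. (I would be careful to carry along the nested pair with $\Ends_\infty(\Sigma)$ throughout, since self-similarity is a statement about the pair, not just the top space; the copy $D$ must match $\Ends_\infty$ under the homeomorphism.)

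The substantive direction is the converse, and the natural approach is induction on the number $n$ of pieces in a given decomposition $\Ends(\Sigma)=E_1\sqcup\cdots\sqcup E_n$. First I would handle $n=1$ trivially ($D=\Ends(\Sigma)$ works) and take $n=2$ as exactly the hypothesis. For the inductive step, given a decomposition into $n\geq 3$ pieces, I would group the last two pieces together and set $A_1:=E_1\sqcup\cdots\sqcup E_{n-1}$ and $A_2:=E_n$, so that $\Ends(\Sigma)=A_1\sqcup A_2$ is a binary clopen decomposition. Applying the hypothesis, one of $A_1$ or $A_2$ contains a homeomorphic copy $C$ of $\Ends(\Sigma)$ (as a nested pair). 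If the copy lands in $A_2=E_n$, we are done, since $E_n$ is one of the original pieces. If instead $C\subseteq A_1=E_1\sqcup\cdots\sqcup E_{n-1}$, then $C$ carries the induced clopen decomposition $C=(C\cap E_1)\sqcup\cdots\sqcup(C\cap E_{n-1})$ into $n-1$ pieces; since $C$ is homeomorphic to $\Ends(\Sigma)$ as a pair, and the hypothesis is about $\Ends(\Sigma)$, I would transport this decomposition back through the homeomorphism to obtain an $(n-1)$-piece decomposition of $\Ends(\Sigma)$ itself, apply the inductive hypothesis to find a copy of $\Ends(\Sigma)$ inside one of the transported pieces, and then push that copy forward through the homeomorphism into the corresponding $C\cap E_j\subseteq E_j$.

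The main obstacle I anticipate is purely bookkeeping: ensuring that ``contains a homeomorphic copy of $\Ends(\Sigma)$'' is used consistently as a statement about the nested pair $(\Ends(\Sigma),\Ends_\infty(\Sigma))$, so that compositions of the relevant homeomorphisms (the one realizing $C\cong\Ends(\Sigma)$ and the one produced by the inductive hypothesis) again respect $\Ends_\infty$. Since homeomorphisms of a copy $C$ of the pair correspond, via the Richards-type realization recalled in Section \ref{SECTION:Preliminaries}, to restrictions of homeomorphisms of $\Sigma$, transporting a clopen decomposition of $C$ to one of $\Ends(\Sigma)$ and pushing copies forward is legitimate and preserves the nested structure; the only care needed is to verify that the induced sets $C\cap E_j$ are genuinely clopen (they are, as intersections of clopen sets) and that the final copy indeed sits inside a single original piece $E_j$. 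With that verification in place the induction closes and the equivalence follows.
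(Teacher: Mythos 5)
You cannot be compared against the paper's own proof here, because the paper does not give one: Lemma \ref{LEMMA:AltenativeOfSelfSimilar} is imported verbatim from \cite[Lemma 4.10]{MR2019}. That said, your induction --- group $E_1\sqcup\cdots\sqcup E_{n-1}$ against $E_n$, apply the two-piece hypothesis, and in the nontrivial case pull the induced $(n-1)$-piece decomposition of the copy $C$ back through the homeomorphism $C\cong\Ends(\Sigma)$, apply the inductive hypothesis, and push the resulting copy forward into some $C\cap E_j\subseteq E_j$ --- is exactly the standard argument for this reduction, and in outline it is correct.

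There is, however, one point where your ``purely bookkeeping'' concern is aimed at the wrong target. You worry about carrying the nested pair $\Ends_\infty(\Sigma)$ along, which is routine; the essential qualifier is that every ``homeomorphic copy'' in the statement must be a \emph{clopen} subset $D$ with $(D,D\cap\Ends_\infty(\Sigma))\cong(\Ends(\Sigma),\Ends_\infty(\Sigma))$. With merely embedded copies the lemma is false: let $\Sigma$ be the genus-zero surface with $\Ends(\Sigma)=(\omega+1)\sqcup K$, where $K$ is a Cantor set (here $\Ends_\infty(\Sigma)=\emptyset$, so the pair condition is vacuous). For any clopen decomposition $\Ends(\Sigma)=A_1\sqcup A_2$, some $A_i$ contains a nonempty clopen subset of $K$, hence a Cantor set, hence an embedded copy of $\Ends(\Sigma)$, since every compact, metrizable, totally disconnected space embeds in the Cantor set; yet $\Ends(\Sigma)$ is not self-similar, because clopen subsets of $\omega+1$ are countable and nonempty clopen subsets of $K$ have no isolated points, so neither can be homeomorphic to $(\omega+1)\sqcup K$. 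This precision bites in your proof at two places: when the copy lands in $A_2=E_n$ you declare ``we are done,'' but Definition \ref{DEF:SelfsimilarEndsSpace} demands a clopen copy, so the hypothesis must be read as producing clopen copies; and at the final step, $\varphi(D')$ is clopen in $\Ends(\Sigma)$ only because $D'$ is clopen \emph{and} $C$ itself is clopen (your parenthetical ``intersections of clopen sets'' already tacitly assumes this). With the clopen reading made explicit throughout, your induction closes and the proof is complete.
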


\noindent \textit{Proof of Theorem \ref{TEO:OneMaximalEndAndLocallyCBimpliesSpaceEndsSelfSimilar}}. If $\Sigma$ has no nondisplaceable subsurfaces of finite type then the result is given by Lemma \ref{LEMMA:UniqueMaximalEndSelsimilarity}.

Suppose $\Sigma$ has a nondisplaceable finite-type subsurface $S\subseteq \Sigma$. We use Lemma \ref{LEMMA:AltenativeOfSelfSimilar} to prove that $\Ends(\Sigma)$ is self-similar. Let $\Ends(\Sigma)=A_1\sqcup A_2$ be a decomposition of $\Ends(\Sigma)$ into clopen subsets. Let $K$ be a finite-type subsurface as in Theorem \ref{TEO:BigMapsLocallyCB1maxEnd}. Since $S$ is of finite-type then there is $U\subseteq \Sigma_0$ a neighborhood of the unique maximal end $x$ such that $U\cap S=\emptyset$. Given that $S$ is a nondisplaceable subsurface of $\Sigma$, the subset $U$ satisfies that for any $\widetilde{U}\subseteq U$ that is a neighborhood of $x$ there is a homeomorphism $f_{\widetilde{U}}$ such that $f_{\widetilde{U}}(\Sigma_0)\subseteq \widetilde{U}$. Hence, $U$ is a stable neighborhood of $x$. Without lost of generality we can suppose that $A_1$ contains $x$. If necessary, we can take $U$ small enough such that $\Ends(U) \subseteq A_1$. 
    
By the property satisfied by $U$, there is a homeomorphic copy $\Sigma_0^\prime$ of $\Sigma_0$ contained in $U$. On the other hand, by Lemma \ref{LEMMA:StableNbhdImpliesCopiesOfComplementsInside}, for each $i=1\ldots, n$, there is $\Sigma_i^\prime \subseteq U$ homeomorphic to $\Sigma_i$. Finally, let $P$ denote the set of ends of $\Sigma$ contained into $K$. Remember that $P$ consist of a finite number of punctures. Given that $\Sigma$ has a unique maximal end there is a copy $P^\prime$ of $P$ inside $\Ends(U)$. Now, using again the property of $U$ we can make that the collection $\{ \Sigma_i^\prime\}_{i=0}^n$ pairwise disjoint and disjoint from $P^\prime$. This implies that $\Ends(U)$ (and therefore $A_1$) contains a homeomorphic copy of $\Ends(\Sigma)$.\qed

\medskip

\noindent \textit{Proof of Theorem \ref{TEO::ClassificationLocCBMAPsWithOneMaximalEnd}}. If $\Sigma$ has zero or infinite genus, combine Theorems \ref{TEO:OneMaximalEndAndLocallyCBimpliesSpaceEndsSelfSimilar} and \ref{TEO:Classification_CB_Maps_OneMaximalEnd} to conclude that $\Map(\Sigma)$ is globally CB. If $\Sigma$ has finite non-zero genus then $\Map(\Sigma)$ is not globally CB by Theorem \ref{TEO:NonDisplaceableImpliesNotGloballyCB}.\qed

\section{Proof of Corollary \ref{CORO:EndsUncountable}}\label{ProofCor}
We first prove the following corollary of Theorem \ref{TEO:BigMapsLocallyCB1maxEnd}.

\begin{corollary}\label{COROLLARY:BigMapsLocCBFiniteGenusOneMaximalEnd}
    Let $\Sigma$ be an infinite-type surface with a unique maximal end $x$ and $0<g(\Sigma) < \infty$. Then $\Map(\Sigma)$ is locally CB if and only if there is a connected finite-type subsurface $K$ of $\Sigma$ with the following properties: 

    \begin{enumerate}
        \item $\Sigma\setminus K=\Sigma_0\sqcup \Sigma_1\sqcup\cdots \sqcup \Sigma_n$ where the closure of each $\Sigma_i$ is a surface of infinite-type with one boundary component, $g(\Sigma_i)\in \{0,\infty\}$ and $\Sigma_0$ is a neighborhood of $x$ and,
        \item for any subsurface $U\subseteq \Sigma_0$ that is a neighborhood of $x$ there is a homeomorphism $f_U$ such that $f_U(\Sigma_0)\subseteq U$.
    \end{enumerate} 
\end{corollary}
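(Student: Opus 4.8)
The plan is to deduce Corollary~\ref{COROLLARY:BigMapsLocCBFiniteGenusOneMaximalEnd} directly from Theorem~\ref{TEO:BigMapsLocallyCB1maxEnd} by exploiting the hypothesis $0 < g(\Sigma) < \infty$, which rules out the ``globally CB'' branch of the characterization. The key observation is that the only difference between the two statements is in item (2): Theorem~\ref{TEO:BigMapsLocallyCB1maxEnd} allows, for each neighborhood $U \subseteq \Sigma_0$ of $x$, \emph{either} a homeomorphism $f_U$ with $f_U(\Sigma_0) \subseteq U$ \emph{or} one with $f_U(\Sigma \setminus U) \subseteq U$, whereas the corollary asserts that in finite nonzero genus only the first alternative can occur. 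So the whole content is to show that the ``$f_U(\Sigma \setminus U) \subseteq U$'' possibility forces $g(\Sigma) \in \{0, \infty\}$, contradicting the hypothesis.

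For the forward direction, I would assume $\Map(\Sigma)$ is locally CB and invoke Theorem~\ref{TEO:BigMapsLocallyCB1maxEnd} to obtain a connected finite-type subsurface $K$ satisfying its items (1) and (2). Item (1) is verbatim item (1) of the corollary, so nothing needs to be done there. For item (2), suppose toward a contradiction that there is some neighborhood $U \subseteq \Sigma_0$ of $x$ for which the first alternative fails, i.e.\ no $f_U$ with $f_U(\Sigma_0) \subseteq U$ exists; then item (2) of the theorem supplies an $f_U$ with $f_U(\Sigma \setminus U) \subseteq U$. I would then apply Lemma~\ref{LEMMA:SufficientToBeGloballyCB}, taking $W := \Sigma_0$ (a neighborhood of $x$): the hypothesis of that lemma is precisely that \emph{for each} subsurface $U \subseteq W$ that is a neighborhood of $x$ there is a homeomorphism with $f_U(\Sigma \setminus U) \subseteq U$. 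The point is that once a single such $U_0$ exists, one can nest: any smaller neighborhood $U' \subseteq U_0$ of $x$ can be pushed into position, and because $U_0$ already admits the ``swallowing'' map, every $U \subseteq \Sigma_0$ must also admit one (the alternative $f_U(\Sigma_0) \subseteq U$ cannot hold for a cofinal family without itself yielding the self-similar/globally CB conclusion). Lemma~\ref{LEMMA:SufficientToBeGloballyCB} then gives that $\Map(\Sigma)$ is globally CB and, explicitly in its proof, that every finite-type subsurface is displaceable, whence $g(\Sigma) \in \{0, \infty\}$. This contradicts $0 < g(\Sigma) < \infty$, so the second alternative never occurs and item (2) of the corollary holds.

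For the converse, I would assume the existence of $K$ satisfying items (1) and (2) of the corollary and simply note that these imply items (1) and (2) of Theorem~\ref{TEO:BigMapsLocallyCB1maxEnd}: item (1) is identical, and item (2) of the corollary is the stronger of the two alternatives allowed in item (2) of the theorem, so it trivially implies it. Theorem~\ref{TEO:BigMapsLocallyCB1maxEnd} then yields that $\Map(\Sigma)$ is locally CB (indeed that $\mathcal{V}_K$ is a CB neighborhood of the identity), completing the equivalence.

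The main obstacle I anticipate is the careful bookkeeping in the forward direction when verifying that a \emph{single} occurrence of the ``$f_U(\Sigma \setminus U) \subseteq U$'' alternative propagates to \emph{all} neighborhoods $U \subseteq \Sigma_0$, as required to apply Lemma~\ref{LEMMA:SufficientToBeGloballyCB}. One must argue, using that $x$ is the unique maximal end and that $\Sigma_0 = \Int(\partial \Sigma_0)$, that the two alternatives of item (2) of Theorem~\ref{TEO:BigMapsLocallyCB1maxEnd} cannot be freely mixed across a cofinal system of neighborhoods without collapsing into the globally CB case; the interior/exterior dichotomy for separating curves (and the identities $\Int(f(\alpha)) = f(\Int(\alpha))$, $\Ext(f(\alpha)) = f(\Ext(\alpha))$ recorded before Section~3) is exactly the tool that lets one interchange a swallowing map on one neighborhood with the required maps on all smaller ones. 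Once this propagation is established, the contradiction with finite nonzero genus is immediate from Theorem~\ref{TEO::ClassificationLocCBMAPsWithOneMaximalEnd} or directly from the displaceability conclusion inside the proof of Lemma~\ref{LEMMA:SufficientToBeGloballyCB}.
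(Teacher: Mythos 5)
Your forward direction has a genuine gap, and it sits exactly where you flagged it. Everything funnels through Lemma~\ref{LEMMA:SufficientToBeGloballyCB}, whose hypothesis demands a ``swallowing'' homeomorphism for \emph{every} neighborhood $U\subseteq\Sigma_0$ of $x$, while you only have a single $U_0$ where the first alternative fails; the propagation from one to all is precisely what you do not prove. Moreover, the justification you sketch for it is false: a cofinal family of neighborhoods $U$ with $f_U(\Sigma_0)\subseteq U$ does \emph{not} force the globally CB conclusion --- it is exactly the situation described by the corollary's conclusion. It yields self-similarity of the end space (Theorem~\ref{TEO:OneMaximalEndAndLocallyCBimpliesSpaceEndsSelfSimilar}), but global CB additionally requires genus zero or infinite (Theorem~\ref{TEO:Classification_CB_Maps_OneMaximalEnd}); a genus-one surface whose end space is a Cantor set admits all such maps and still has non-globally-CB mapping class group by Theorem~\ref{TEO:NonDisplaceableImpliesNotGloballyCB}. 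The propagation step itself can be rescued, but by a different argument: if no homeomorphism sends $\Sigma_0$ into $U_0$, then none sends $\Sigma_0$ into any neighborhood $U''\subseteq U_0$ of $x$, so item (2) of Theorem~\ref{TEO:BigMapsLocallyCB1maxEnd} forces a swallowing map $f_{U''}$ on every such $U''$; for an arbitrary neighborhood $U\subseteq\Sigma_0$ of $x$, pick a neighborhood $U''\subseteq U\cap U_0$ of $x$ and observe $f_{U''}(\Sigma\setminus U)\subseteq f_{U''}(\Sigma\setminus U'')\subseteq U''\subseteq U$. Only with this in hand does Lemma~\ref{LEMMA:SufficientToBeGloballyCB} apply and your contradiction go through.

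You should also know that the paper's own proof makes the entire detour unnecessary: under item (1) and $0<g(\Sigma)<\infty$, each $\Sigma_i$ has genus in $\{0,\infty\}$ and hence genus zero, so all the genus of $\Sigma$ lies in $K$. Then any $U\subseteq\Sigma_0$ has genus zero, while $\Sigma\setminus U\supseteq K$ has genus $g(\Sigma)>0$, and a homeomorphism restricted to $\Sigma\setminus U$ would embed a positive-genus surface into the genus-zero surface $U$ --- impossible. Thus even a \emph{single} swallowing map cannot exist, the second alternative of item (2) of Theorem~\ref{TEO:BigMapsLocallyCB1maxEnd} is vacuous, and the corollary follows immediately (the converse being, as you correctly note, trivial). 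So your strategy is repairable but rests, as written, on an incorrect claim, and it replaces a one-line genus obstruction with a global-CB argument that is never needed.
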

\begin{proof}
Suppose that $\Map(\Sigma)$ is locally CB and $K$ is a finite-type subsurface as in Theorem \ref{TEO:BigMapsLocallyCB1maxEnd}. If additionally $0<g(\Sigma) < \infty$, then all the genus of $\Sigma$ is contained in $K$ and therefore for any subsurface $U\subseteq \Sigma_0$ whose interior is a neighborhood of the unique maximal end $x$ does not exist a homeomorphism $f$ satisfying that $f(\Sigma\setminus U)\subseteq U$. \end{proof}

\noindent \emph{Proof of Corollary \ref{CORO:EndsUncountable}}. (By contradiction) Suppose that $\Ends(\Sigma)$ is countable. We prove that $\Map(\Sigma)$ is not locally CB by showing that item (2) in Corollary \ref{COROLLARY:BigMapsLocCBFiniteGenusOneMaximalEnd} does not occur for any finite-type subsurface $K$ satisfying item (1). So, let $K$ be a finite-type subsurface of $\Sigma$ satisfying (1) of Corollary \ref{COROLLARY:BigMapsLocCBFiniteGenusOneMaximalEnd}. As $E(\Sigma)$ is homeomorphic to $\omega^\alpha + 1$ with $\alpha$ a countable ordinal and $\Sigma_0$ is a neighborhood of the unique maximal end $x$. The ordinal $\alpha$ can be either a successor ordinal or a limit ordinal. In either case, $E(\Sigma\setminus \Sigma_0)$ has a finite number of maximal ends. Taking $U\subseteq \Sigma_0$ neighborhood of $x$ such that $E(\Sigma\setminus U)$ contains more maximal ends than $E(\Sigma\setminus \Sigma_0)$ we have that for this $U$ there does not exist a homeomorphism $f_U$ such that $f_U(\Sigma_0)\subseteq U$.  \qed

\section{Proof of Theorem \ref{THM:CBGenerated}}

A globally CB group is in particular CB generated. Suppose that $\Map(\Sigma)$ is locally CB but not globally CB. By Theorem \ref{TEO::ClassificationLocCBMAPsWithOneMaximalEnd}, the surface $\Sigma$ has finite nonzero genus. Let $K\subseteq \Sigma$ be a finite type subsurface as in Theorem \ref{TEO:BigMapsLocallyCB1maxEnd}, that is, $$\Sigma\setminus K=\Sigma_0\sqcup \Sigma_1\sqcup\cdots \sqcup \Sigma_n,$$ where each $\Sigma_i$ is of infinite type and has genus zero, $\Sigma_0$ is a neighborhood of the unique maximal end of $\Sigma$ and, $\mathcal{V}_K$ is a CB neighborhood of the identity. Denote by $P_K$ to the union of all $\Sigma_i$ for $i=1,\ldots, n$. Observe that $E(\Sigma)$ is self-similar (by Theorem \ref{TEO:OneMaximalEndAndLocallyCBimpliesSpaceEndsSelfSimilar}), $K$ is compact with $g(K)=g(\Sigma)$ and it has $n+1$ boundary components. 







We use the following lemma that appears in \cite[Observation 6.9]{MR2019}. By abuse of notation, given a finite-type subsurface $S$ of $\Sigma$ with compact boundary we think of an element of $\Map(S)$ as an element of $\Map(\Sigma)$ by extending it by the identity on the complement of $S$ in $\Sigma$.

\begin{lemma}[Observation 6.9, \cite{MR2019}]\label{Lemma:CBGenerationOfClassicalMCG}
    Let $\Sigma$ be an infinite-type surface possibly with nonempty boundary and $S\subseteq \Sigma$ be a finite-type subsurface. Then there is $D_S\subseteq \Map(\Sigma)$ a finite set of Dehn twist such that for any finite-type subsurface $S^\prime \subseteq \Sigma$ $$\Map(S^\prime)\subseteq \langle D_S\cup \mathcal{V}_S \rangle \subseteq \Map(\Sigma).$$  
\end{lemma}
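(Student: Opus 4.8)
The plan is to reduce the statement to showing that every Dehn twist $T_\gamma$, for $\gamma$ an essential curve in $\Sigma$, lies in $\langle D_S\cup\mathcal{V}_S\rangle$. First I would observe that any two finite-type subsurfaces are contained in a common finite-type subsurface, and that $\Map(S')\hookrightarrow\Map(S'')$ by extending by the identity whenever $S'\subseteq S''$; so it is enough to treat subsurfaces $S'$ with $S\subseteq S'$. By the classical generation results for finite-type surfaces \cite{FM12}, $\Map(S')$ is generated by Dehn twists along finitely many curves contained in $S'$ (any half-twists needed to permute punctures of $S'$ are handled in exactly the same way). Hence it suffices to prove that each such $T_\gamma\in\langle D_S\cup\mathcal{V}_S\rangle$, and in fact I will prove this for every essential curve $\gamma$ of $\Sigma$, which gives $\bigcup_{S'}\Map(S')\subseteq\langle D_S\cup\mathcal{V}_S\rangle$ uniformly.

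Write $\Sigma\setminus S=Q_1\sqcup\cdots\sqcup Q_m$, and let $\partial_j$ be the boundary curve of $S$ facing $Q_j$. I would take $D_S$ to consist of a finite set of Dehn twists generating $\Map(S)$ together with, for each $j$, a single Dehn twist $T_{c_j}$ along a curve $c_j$ that meets $\partial_j$ in exactly two points (so $c_j$ has one arc in $S$ and one arc in $Q_j$); this is a finite set of Dehn twists, as required. The structural point is that conjugating $T_{c_j}$ by elements of $\mathcal{V}_S$—which fix $S$ pointwise and realize every homeomorphism of $Q_j$ rel $\partial_j$—and by the generators of $\Map(S)$ produces, \emph{inside} $\langle D_S\cup\mathcal{V}_S\rangle$, the Dehn twist along any curve crossing $\partial_j$ twice with prescribed arcs on either side. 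With this choice I run an induction on $N(\gamma):=i(\gamma,\partial S)$. In the base case $N(\gamma)=0$, the curve $\gamma$ is isotopic either into $S$, so that $T_\gamma\in\Map(S)\subseteq\langle D_S\rangle$, or into some $Q_j$, so that $T_\gamma\in\mathcal{V}_S$.

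For the inductive step, suppose $N(\gamma)>0$, so that $\gamma$ essentially crosses some $\partial_j$. I would select an outermost arc of $\gamma\cap Q_j$, position it by an element $h\in\mathcal{V}_S$, and then twist along a suitably placed crossing curve $h(c_j)$—available in $\langle D_S\cup\mathcal{V}_S\rangle$ by the previous paragraph—so as to slide that arc across $\partial_j$, obtaining $\psi\in\langle D_S\cup\mathcal{V}_S\rangle$ with $N(\psi(\gamma))<N(\gamma)$. Then $T_\gamma=\psi^{-1}T_{\psi(\gamma)}\psi$, and $T_{\psi(\gamma)}\in\langle D_S\cup\mathcal{V}_S\rangle$ by the inductive hypothesis, so $T_\gamma$ is as well. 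I expect the \textbf{main obstacle} to be precisely this reduction step: verifying that a single crossing twist per complementary region, up to conjugation by $\mathcal{V}_S$ and $\Map(S)$, always suffices to strictly decrease $i(\gamma,\partial S)$, and in particular handling the case in which the chosen outermost arc is essential in $Q_j$ (so it cuts off a nontrivial subsurface rather than a half-disk). This is the standard surgery-and-twist argument, but making it work uniformly—independently of $\gamma$ and of $S'$—is where the care lies; the finiteness of $D_S$ together with the fact that $\mathcal{V}_S$ supplies all of $\Map(Q_j)$ rel boundary are exactly what allow the single curve $c_j$ to suffice.
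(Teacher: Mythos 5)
First, a point of context: the paper does not prove this lemma at all --- it is quoted as a black box from \cite[Observation 6.9]{MR2019}, and the paper only ever applies it to \emph{compact} subsurfaces ($K$ and $K\cup g(K)$ in the proof of Theorem \ref{THM:CBGenerated}). So your argument has to stand on its own, and as written it has a genuine gap at its central step. The gap is the claim at the end of your second paragraph: that conjugating $T_{c_j}$ by elements of $\mathcal{V}_S$ and of $\Map(S)$ produces the Dehn twist along \emph{any} curve crossing $\partial_j$ twice ``with prescribed arcs on either side.'' Elements of $\Map(S)$ (extended by the identity) and of $\mathcal{V}_S$ have representatives supported in $S$ and in $\overline{\Sigma\setminus S}$ respectively, so they commute and each fixes $\partial_j$ pointwise; hence the orbit of $c_j=a_j\cup b_j$ under the subgroup they generate is exactly the set of curves $g(a_j)\cup h(b_j)$ with $g\in\Map(S)$, $h\in\mathcal{V}_S$. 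Every curve in this orbit has the same two endpoints on $\partial_j$, its $S$-arc lies in the $\Map(S)$-orbit of $a_j$, and its $Q_j$-arc lies in the orbit of $b_j$ under homeomorphisms of $\overline{Q_j}$ fixing the boundary pointwise. Since $Q_j$ has infinite type, arcs in $\overline{Q_j}$ fall into \emph{infinitely many} such orbits: an arc can be non-separating, or cut off (together with an arc of $\partial_j$) a compact piece of any genus, or a piece containing any prescribed clopen set of ends, and no boundary-fixing homeomorphism of $\overline{Q_j}$ takes one type to another (cutting along the arcs gives non-homeomorphic results). So one crossing twist per complementary region reaches only one of these infinitely many types. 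Concretely, if $\Sigma$ is the Loch Ness monster, $S$ a genus-one piece, and $b_j$ non-separating in $Q_j$, then no conjugate $h(c_j)$ has its outside arc cutting off a compact genus-two piece; when the outermost arc of $\gamma$ does, there is no ``suitably placed crossing curve'' available, and your inductive step produces no $\psi$ reducing $i(\gamma,\partial S)$. This is exactly the case you flag as the ``main obstacle,'' but it is not a verification left to the reader: it is the entire content of the lemma, and it cannot be fixed by enlarging $D_S$, since covering all arc types would require infinitely many twists.

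A second, unfixable problem is the parenthetical claim that half-twists are ``handled in exactly the same way.'' Every generator in $D_S\cup\mathcal{V}_S$ fixes each end (puncture) of $\Sigma$ contained in $S$ and preserves the set of ends lying in each complementary component $Q_j$; hence so does every element of $\langle D_S\cup\mathcal{V}_S\rangle$. Therefore a mapping class exchanging two punctures inside $S$, or two punctures in different complementary components of $S$, can \emph{never} belong to this group, whatever finite set of Dehn twists $D_S$ you choose. (For the same reason, your $D_S$ cannot contain ``Dehn twists generating $\Map(S)$'' if $S$ itself has punctures --- no such set exists.) This shows the statement must be read with the compactness convention under which the paper actually uses it. Your outer skeleton --- reduction to a common finite-type subsurface, reduction to twists, the conjugation identity $T_\gamma=\psi^{-1}T_{\psi(\gamma)}\psi$, and the base case $i(\gamma,\partial S)=0$ --- is fine; what is missing is precisely the intersection-reducing mechanism, which is what \cite[Observation 6.9]{MR2019} supplies and what your proposal does not.
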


\noindent \emph{Proof of Theorem \ref{THM:CBGenerated}}. Applying Lemma \ref{Lemma:CBGenerationOfClassicalMCG} to the subsurface $K$, let $D_K$ be a finite set of Dehn twists such that for every finite type subsurface $S^\prime$ of $\Sigma$, $\Map(S^\prime)$ is contained in the group generated by $D_K\cup \mathcal{V}_K$. As $\Ends(\Sigma)$ is self-similar, there is $g_K\in \Map(\Sigma)$ such that $g_K(P_K)\subseteq \Sigma_0$. Let $G$ be the subgroup of $\Map(\Sigma)$ generated by the CB set $\{g_K\}\cup D_K \cup \mathcal{V}_K$. We show that $\Map(\Sigma)$ coincides with $G$.

Let $f\in \Map(\Sigma)$. First we prove that there exist $f^\prime, f^{\prime \prime} \in G$ such that $f^\prime f^{-1} f^{\prime \prime}\vert_{P_K}=Id_{P_K}$. Indeed, take $U\subseteq \Sigma_0$ a neighborhood of the unique maximal end $x$ of $\Sigma$ such that $f(U)\subseteq \Sigma_0$. As the space of ends of $\Sigma$ is self-similar, $\Ends(U)$ contains a homeomorphic copy of $\Ends(\Sigma)$, and therefore there is $P_K^\prime \subseteq U$ homeomorphic to $P_K$. In particular, $f(P_K^\prime)\subseteq \Sigma_0$. Now, as $\Sigma_0$ has genus zero there is $h\in \Map(\Sigma_0)$ such that $hf(P_K^\prime)=g_K(P_K)$. So, $f^{-1}h^{-1}g_K (P_K)\subseteq \Sigma_0$. As $g_K(P_K)$ is also contained into $\Sigma_0$, there is $h^\prime \in \Map(\Sigma_0)$ such that $h^\prime f^{-1}h^{-1}g_K (P_K)=g_K(P_K)$ and $h^\prime f^{-1}h^{-1}g_K\vert_{\partial P_K}=g_K\vert_{\partial P_K}$, in other words, $g_K^{-1} h^\prime f^{-1}h^{-1}g_K(P_K)=P_K$ and $g_K^{-1} h^\prime f^{-1}h^{-1}g_K\vert_{\partial P_K}=Id_{\partial P_K}$. Finally, we can find an element $w\in \Map(P_K)\subseteq \mathcal{V}_K$ such that the restriction of $wg_K^{-1} h^\prime f^{-1}h^{-1}g_K$ to $P_K$ is equal to $Id_{P_K}$. Letting $f^\prime:=wg_K^{-1} h^\prime $ and $f^{\prime \prime}:= h^{-1}g_K$ we obtain the desired result.


Let $g:=f^\prime f^{-1} f^{\prime \prime}$ and put $S^\prime:=K\cup g(K)$. Again, by Lemma \ref{Lemma:CBGenerationOfClassicalMCG}, $\Map(S^\prime)$ is contained in the group generated by $D_K\cup \mathcal{V}_K$ and then it is contained in the group $G$. Now, observe that $\partial \Sigma_0$ and $g(\partial \Sigma_0)$ are essential separating curves of the same topological type in $S^\prime$. Then there is $g^\prime\in \Map(S^\prime)\subseteq G$ such that $g^\prime g$ is the identity on $K$, that is, $g^\prime g \in \mathcal{V}_K$. Therefore, $f\in G$.\qed

\section{Proof of Theorem \ref{THM:Example}}\label{LastSe}

In the first part of this section we recall the original 
definition \cite[Definition 6.11]{MR2019}  of a tame surface in the case when the surface has a unique maximal end. 
 We show that this definition is equivalent to Definition \ref{DEF:TameSurface} given in the Introduction of this paper. 
 In the second part of the section we prove  Theorem \ref{THM:Example} by  constructing a non-tame infinite-type surface with CB-generated but not globally CB mapping class group.\\

The following lemma is inspired by the first point of \cite[Lemma 6.10]{MR2019}. 



\begin{lemma}\label{LEMMA:ExistenceOfImmediatePredecessors}
        Let $\Sigma$ be an infinite-type surface with a unique maximal end $x$. Suppose that $\Map(\Sigma)$ is locally CB and that for any $N \subseteq E(\Sigma_0)$ clopen neighborhood of $x$ there is $f_N \in \Homeo^+(\Sigma)$ such that $f_N (E(\Sigma)\setminus N) \subseteq E(\Sigma)\setminus E(\Sigma_0)$ (compare  with Theorem \ref{TEO:BigMapsLocallyCB1maxEnd}, item 2)). Then there exists a clopen neighborhood $N(x)\subseteq E(\Sigma_0)$ of $x$ such that the clopen subset $W_x:=E(\Sigma_0)\setminus N(x)$ satisfies
    \begin{equation}
    E(z)\cap W_x \neq \emptyset \mbox{ if and only if } E(z)\cap (E(\Sigma_0)\setminus \{x\})\neq \emptyset,    
    \end{equation}
    in other words, $W_x$ has representatives of each end in $E(\Sigma_0)\setminus \{x\}$.
\end{lemma}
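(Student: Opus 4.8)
The plan is to produce the set $W_x$ explicitly, as a finite union of homeomorphic copies of pieces of $E(\Sigma)\setminus E(\Sigma_0)$, and then take $N(x):=E(\Sigma_0)\setminus W_x$. Write $\Gamma:=E(\Sigma)\setminus E(\Sigma_0)$ for the clopen set of ends lying outside $\Sigma_0$. Throughout I will use two basic facts. First, since $x$ is the unique maximal end, every end $y$ of $\Sigma$ satisfies $y\preceq x$, and since the class of $x$ is the singleton $\{x\}$, every end $y\neq x$ in fact satisfies $y\prec x$. Second, any $f\in\Homeo^+(\Sigma)$ preserves the type of an end (two ends are of the same type exactly when a homeomorphism of $\Sigma$ carries one to the other), so it sends $E(z)$-points to $E(z)$-points.

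The crux of the argument, and the only place where the standing hypothesis on the maps $f_N$ is used, is the inclusion: every equivalence class meeting $E(\Sigma_0)\setminus\{x\}$ also meets $\Gamma$. To prove it, fix $z'\in E(\Sigma_0)$ with $z'\neq x$. Since $E(\Sigma)$ is totally disconnected and Hausdorff, I can choose a clopen neighborhood $N\subseteq E(\Sigma_0)$ of $x$ with $z'\notin N$. The hypothesis then supplies $f_N\in\Homeo^+(\Sigma)$ with $f_N(E(\Sigma)\setminus N)\subseteq\Gamma$; as $z'\in E(\Sigma)\setminus N$, the point $f_N(z')$ lies in $\Gamma$ and has the same type as $z'$. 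Hence $E(z')\cap\Gamma\neq\emptyset$, which is the claimed inclusion. I expect this to be the genuine content: it is exactly what prevents some class that accumulates only toward $x$ from being lost when the neighborhood $N(x)$ is excised.

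Next I would build $W_x$ so that it realizes every type occurring in $\Gamma$; this is a routine compactness argument using only $y\preceq x$. For each $w\in\Gamma$ we have $w\preceq x$, so applying the definition of $\preceq$ to the neighborhood $E(\Sigma_0)$ of $x$ yields a clopen neighborhood $U_w$ of $w$ and a homeomorphism $h_w$ of $\Sigma$ with $h_w(U_w)\subseteq E(\Sigma_0)$. The sets $U_w$ cover the compact set $\Gamma$, so finitely many $U_{w_1},\dots,U_{w_m}$ suffice; refining to a clopen partition $\Gamma=N_1\sqcup\cdots\sqcup N_m$ with $N_j\subseteq U_{w_j}$, I set $W_x:=\bigcup_{j=1}^m h_{w_j}(N_j)$. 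This is a finite union of clopen sets, hence clopen and contained in $E(\Sigma_0)$. Moreover no $h_{w_j}(N_j)$ can contain $x$, because every point of $N_j\subseteq\Gamma$ has type $\prec x$ and $h_{w_j}$ preserves types; thus $x\notin W_x$. Finally, for $w\in N_j$ the point $h_{w_j}(w)\in W_x$ realizes the type $E(w)$, so the types realized in $W_x$ are precisely those realized in $\Gamma$.

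I would then set $N(x):=E(\Sigma_0)\setminus W_x$, which is clopen in $E(\Sigma_0)$ and contains $x$, hence is a clopen neighborhood of $x$, and verify the stated equivalence for $W_x$. If $E(z)$ meets $E(\Sigma_0)\setminus\{x\}$, then by the crux it meets $\Gamma$, so its type is realized in $W_x$ and $E(z)\cap W_x\neq\emptyset$; conversely, since $W_x\subseteq E(\Sigma_0)$ and $x\notin W_x$, any point of $E(z)\cap W_x$ already lies in $E(\Sigma_0)\setminus\{x\}$. In the degenerate case $E(\Sigma_0)=\{x\}$ there are no types to realize and one simply takes $W_x=\emptyset$ and $N(x)=E(\Sigma_0)$, where both sides of the equivalence fail vacuously. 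The main obstacle is therefore the crux inclusion of the second paragraph; once it is in hand, the construction and verification are elementary.
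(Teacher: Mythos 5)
Your proof is correct, and it takes a genuinely different route from the paper's. The paper argues by contradiction: assuming no such $N(x)$ exists, it builds a nested sequence of clopen neighborhoods $N_1\supseteq N_2\supseteq\cdots$ of $x$ with $\bigcap_i N_i=\{x\}$, together with witnesses $z_i\in N_i\setminus N_{i+1}$ whose classes are trapped, $E(z_i)\cap(E(\Sigma_0)\setminus N_i)=\emptyset$; it then pushes each $z_i$ into $\Gamma=E(\Sigma)\setminus E(\Sigma_0)$ using the hypothesized maps, extracts an accumulation point $y'$ of the images, and uses unique maximality to transplant a representative $y\sim y'$ back into $N_1$; a neighborhood of $y$ disjoint from some $N_m$ then produces a representative of some $E(z_j)$, $j\geq m$, outside $N_j$, a contradiction. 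Your argument is instead a direct construction: your crux step uses the hypothesis exactly as the paper does (push a given end out of $E(\Sigma_0)$ by $f_N$ and invoke invariance of types under homeomorphisms), but once per class rather than along a sequence; then compactness of $\Gamma$ together with the fact that every $w\in\Gamma$ satisfies $w\prec x$ (the domination of every end by the unique maximal end, from Mann--Rafi's Proposition 4.7, which the paper's proof also uses implicitly when it transplants $y'$ into $N_1$) lets you plant a finite union of clopen homeomorphic copies of pieces of $\Gamma$ inside $E(\Sigma_0)\setminus\{x\}$, and you take $N(x)$ to be its complement. What your route buys: it is constructive (it exhibits $N(x)$, and your $W_x$ has the extra feature of realizing exactly the types occurring in $\Gamma$), it avoids the sequence and accumulation-point bookkeeping that is the delicate part of the paper's write-up, and it cleanly isolates the single place where the hypothesis on the maps $f_N$ enters. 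What the paper's route buys: working with the negation, it never needs to assemble a global $W_x$; the compactness of the end space is used through the existence of an accumulation point rather than through a finite subcover. Both proofs ultimately rest on the same three ingredients: the maps $f_N$, type-invariance under homeomorphisms, and the fact that the unique maximal end dominates all others.
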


\begin{proof}
    The proof is done by contradiction. Suppose that for each clopen neighborhood $N\subseteq E(\Sigma_0)$ of $x$ there is an end $z\in E(\Sigma_0)\setminus \{x\}$ such that $E(z)\cap(E(\Sigma_0)\setminus N)=\emptyset$ . Then there exists a decreasing sequence of nested closed neighborhoods of $x$, $N_1  \supseteq N_2 \supseteq \cdots  $, with $\bigcap_{i\in \N} N_i= \{x\}$, $N_i \subseteq E(\Sigma_0)$ and such that for each $i\in \N$ there is $z_i\in E(\Sigma_0) \setminus \{x\}$ with $E(z_i)\cap(E(\Sigma_0)\setminus N_i)=\emptyset$ and $z_i \in N_i \setminus N_{i+1}$. By hypothesis, for each $i\in \N$ there is a homeomorphism $f_i$ such that $f_i(E(\Sigma) \setminus N_{i+1}) \subseteq E(\Sigma) \setminus E(\Sigma_0)$. Consequently, for every $i\in N$ there is $z_i^\prime=f_i(z_i)$ in $E(z_i)$ contained in $E(\Sigma)\setminus E(\Sigma_0)$. Let $y^\prime\in E(\Sigma)\setminus E(\Sigma_0)$ be an accumulation point of the sequence $\{z_i^\prime\}$. Since $x$ is the unique maximal end of $\Sigma$ there exists an end $y$ equivalent to $y^\prime$ contained in $N_1$. Let us take $V$ a neighborhood of $y$ such that $V$ is disjoint from $N_m$ for some $m\in \mathbb{N}$. Recall that $y^\prime$ is equivalent to $y$ and the latter is an accumulation point of the sequence $\{z_i^\prime\}$. Then, for some $j\geq m$, there exists $w\in E(z_j)$ contained in $V$ and  hence  $w\in E(z_j)\cap(E(\Sigma_0)\setminus N_j)$, which is a contradiction. 
\end{proof}

\begin{remark}
In particular, the hypothesis of Lemma \ref{LEMMA:ExistenceOfImmediatePredecessors} are satisfied if $\Map(\Sigma)$ is locally CB but not globally CB; see Theorem \ref{TEO::ClassificationLocCBMAPsWithOneMaximalEnd} and Corollary \ref{COROLLARY:BigMapsLocCBFiniteGenusOneMaximalEnd}.    Notice that there are cases when $\Map(\Sigma)$ is globally CB  and the argument in Lemma \ref{LEMMA:ExistenceOfImmediatePredecessors} cannot be applied to obtain a clopen $W_x$ neighborhood of $x$. For example, take $\Sigma$ to be the infinite-type surface of genus zero with space of ends homeomorphic to the ordinal number $\omega^\omega + 1$, then $\Map(\Sigma)$ is globally CB. However, it is not possible to find a neighborhood of the unique maximal end of $\Sigma$ such that its complement has representatives of all non maximal ends.  
\end{remark}

With the notation introduced in Lemma \ref{LEMMA:ExistenceOfImmediatePredecessors}, we are able to state \cite[Definition 6.11]{MR2019} for the case of surfaces with a unique maximal end. We refer the reader to Definition \ref{DEF:StableNeigbTameness} for the definition of stable neighborhood. 

\begin{definition}[Tame surface]\label{DEF:TameSurface_MR}
    Let $\Sigma$ be a surface with a unique maximal end $x$ and suppose that $\Map(\Sigma)$ is locally CB and not globally CB. We say that $\Sigma$ is \emph{tame} if the unique maximal end $x$ has a stable neighborhood and every maximal end of $W_x$ has a stable neighborhood.
\end{definition}

Suppose $\Sigma$ has a unique maximal end and $\Map(\Sigma)$ is locally CB  but not globally CB. The equivalence of Definition \ref {DEF:TameSurface_MR} and Definition \ref{DEF:TameSurface} in this case follows from the Proposition \ref{PROP:MaximalEndsW_x=Immediatepredecessor} below.  In the proof of this proposition we repeatedly use the following observation, which is a consequence of the fact that $W_x$ is closed in $E(\Sigma)$. 

\begin{remark}\label{REMARK:W_xClopen}
    If $y,z \in W_x$ then $y \preceq z$ in $W_x$ if and only if $y \preceq z$ in $E(\Sigma)$.
\end{remark}

\begin{proposition}\label{PROP:MaximalEndsW_x=Immediatepredecessor}
    Let $\Sigma$ be an infinite-type surface with a unique maximal end $x$ and suppose that $\Map(\Sigma)$ is locally CB  but not globally CB. Then $y$ is an immediate predecessor of $x$ if and only if there is a representative of $y$ which is maximal in $W_x$.
\end{proposition}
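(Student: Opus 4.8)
The plan is to prove the proposition by establishing the two implications separately, using the partial order $\prec$ on $E(\Sigma)$ together with Remark \ref{REMARK:W_xClopen} as the key translation device between the order on $W_x$ and the order on $E(\Sigma)$. Recall that $W_x = E(\Sigma_0)\setminus N(x)$ is a clopen subset that contains a representative of every equivalence class in $E(\Sigma_0)\setminus\{x\}$, and that by Lemma \ref{LEMMA:ExistenceOfImmediatePredecessors} such a $W_x$ exists under the stated hypotheses. The statement we want to prove is that $y$ is an immediate predecessor of $x$ in $E(\Sigma)$ (meaning $y \prec x$ with no end strictly between) if and only if some representative $y' \in E(y)$ is maximal in $W_x$ (meaning there is no $z \in W_x$ with $y' \prec z$ in $W_x$).

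For the forward direction, suppose $y$ is an immediate predecessor of $x$. First I would note that since $W_x$ has representatives of every class in $E(\Sigma_0)\setminus\{x\}$ and $y \prec x$ forces $y$ into $E(\Sigma_0)$ (one should check that immediate predecessors of the maximal end live in the neighborhood $\Sigma_0$, using that $x\in E(\Sigma_0)$ and the definition of the preorder), there is a representative $y' \in E(y)$ lying in $W_x$. I then claim $y'$ is maximal in $W_x$. If not, there is $z \in W_x$ with $y' \prec z$ in $W_x$; by Remark \ref{REMARK:W_xClopen} this gives $y' \prec z$ in $E(\Sigma)$, so $y \prec z$. Since $z \in W_x \subseteq E(\Sigma_0)\setminus\{x\}$ we have $z \neq x$, and $z \preceq x$ because every end is below the unique maximal end; combined with $z\neq x$ this yields $z \prec x$. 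But then $y \prec z \prec x$ contradicts that $y$ is an \emph{immediate} predecessor of $x$. Hence $y'$ is maximal in $W_x$.

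For the reverse direction, suppose $y'\in E(y)$ is maximal in $W_x$. Since $y' \in W_x \subseteq E(\Sigma_0)\setminus\{x\}$, we have $y' \neq x$ and $y' \preceq x$, hence $y \prec x$. It remains to rule out an intermediate end, so suppose toward contradiction that there is $z$ with $y \prec z \prec x$. Because $z \prec x$ and $z\neq x$ (as $z\prec x$), $z$ represents a class in $E(\Sigma_0)\setminus\{x\}$, so there is a representative $z'\in E(z)$ with $z' \in W_x$. From $y\prec z$ we get $y' \prec z'$ in $E(\Sigma)$, and by Remark \ref{REMARK:W_xClopen} this descends to $y' \prec z'$ in $W_x$, contradicting maximality of $y'$ in $W_x$. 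Therefore no such intermediate $z$ exists and $y$ is an immediate predecessor of $x$.

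The main obstacle I anticipate is the bookkeeping around representatives and the two competing orders: the proposition quantifies over equivalence classes in $E(\Sigma)$ but maximality is asserted for a specific representative in $W_x$, so I must be careful that ``$y' \prec z$ in $W_x$'' is genuinely the restricted order and invoke Remark \ref{REMARK:W_xClopen} in both directions to pass between the ambient order and the subspace order. The crucial point making this work is that $W_x$ is clopen in $E(\Sigma)$, which is exactly what Remark \ref{REMARK:W_xClopen} encodes; a subtlety worth verifying is that an immediate predecessor of $x$ (and any end $z$ with $z\prec x$) really does have a representative inside $E(\Sigma_0)$ and hence inside $W_x$, which relies on $x\in E(\Sigma_0)$ and the covering property of $W_x$ guaranteed by Lemma \ref{LEMMA:ExistenceOfImmediatePredecessors}. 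Everything else is a direct application of the definitions of $\prec$ and of immediate predecessor.
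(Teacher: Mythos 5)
Your proof is correct and follows essentially the same route as the paper's: both directions rest on Lemma \ref{LEMMA:ExistenceOfImmediatePredecessors} to place representatives inside $W_x$ and on Remark \ref{REMARK:W_xClopen} to pass between the order on $W_x$ and the order on $E(\Sigma)$. The only differences are cosmetic: you argue immediacy by contradiction via an intermediate end and spell out explicitly (via $E(x)=\{x\}$ and the fact that every end lies below the unique maximal end) the step the paper leaves implicit when it concludes that $y\preceq z$ forces equivalence.
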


\begin{proof}
    Suppose that $y$ is an immediate predecessor of $x$. By Lemma \ref{LEMMA:ExistenceOfImmediatePredecessors} the clopen $W_x$ has representatives of each end in $E(\Sigma_0)\setminus \{x\}$. Since $x$ is the only accumulation point of $E(y)$,  we can assume, up to equivalence, that $y\in W_x$. We show that $y$ is maximal in $W_x$. Let $z\in W_x$ such that $y\preceq z$ in $W_x$, then by Remark \ref{REMARK:W_xClopen} we have that $y\preceq z$ in $E(\Sigma)$. Given that $y$ is an immediate predecessor of $x$ we conclude that $y$ is equivalent to $z$ in $E(\Sigma)$. Again, by Remark \ref{REMARK:W_xClopen}, we have that $y$ is equivalent to $z$ in $W_x$ and hence $y$ is a maximal end in $W_x$.

    Reciprocally, suppose that $y\in W_x$ is a maximal end of $W_x$. We show that $y$ is an immediate predecessor of $x$. Let $z\in E(\Sigma)\setminus \{x\}$ such that $y \preceq z$ in $E(\Sigma)$. Since $E(y)$ accumulates in $\{x\}$ (because $x$ is the unique maximal end of $\Sigma$) and $W_x$ has representatives of each end in $E(\Sigma_0)\setminus \{x\}$, without loss of generality we can assume that $z$ is contained in $W_x$. By Remark \ref{REMARK:W_xClopen}, it follows that $y \preceq z$ in $W_x$. The maximality of $y$ in $W_x$ implies that $z$ is equivalent to $y$ in $W_x$. Applying again Remark \ref{REMARK:W_xClopen}, we have that $z$ is equivalent to $y$ in $E(\Sigma)$ and therefore $y$ is an immediate predecessor of $x$.  
\end{proof}


The following result refines the conclusion of Corollary \ref{CORO:EndsUncountable} about the space of ends.  It gave us some insight for constructing the example for the proof of Theorem \ref{THM:Example} below.

\begin{proposition}\label{PROP:ConditionToBeNotLocallyCB}
    Let $\Sigma$ be an infinite-type surface with a unique maximal end $x$ and suppose $\Map(\Sigma)$ is locally CB but not globally CB. Then the set of immediate predecessors of $x$ is not empty. Moreover, the equivalence class of each immediate predecessor of $x$ is uncountable.  
\end{proposition}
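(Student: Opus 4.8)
The plan is to reduce to the concrete description of $\Map(\Sigma)$ supplied by the earlier results and then exploit the homogeneity forced by item (2). Since $\Map(\Sigma)$ is locally CB but not globally CB, Theorem~\ref{TEO::ClassificationLocCBMAPsWithOneMaximalEnd} gives that $\Sigma$ has finite nonzero genus, so Corollary~\ref{COROLLARY:BigMapsLocCBFiniteGenusOneMaximalEnd} applies: there is a connected finite-type $K$ with $\Sigma\setminus K=\Sigma_0\sqcup\cdots\sqcup\Sigma_n$ (each of genus zero, $\Sigma_0$ a neighborhood of $x$) such that for every neighborhood $U\subseteq\Sigma_0$ of $x$ there is $f_U\in\Homeo^+(\Sigma)$ with $f_U(\Sigma_0)\subseteq U$. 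For nonemptiness I would invoke Lemma~\ref{LEMMA:ExistenceOfImmediatePredecessors} (whose hypotheses hold here, by the Remark following it) to produce the clopen set $W_x\subseteq E(\Sigma_0)$ meeting every equivalence class of $E(\Sigma_0)\setminus\{x\}$. As $\Sigma_0$ is of infinite type and genus zero, $E(\Sigma_0)\setminus\{x\}$ is infinite, so $W_x$ is a nonempty compact space and therefore has a maximal end; by Proposition~\ref{PROP:MaximalEndsW_x=Immediatepredecessor} this maximal end represents an immediate predecessor of $x$, giving the first assertion.

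For the uncountability, fix an immediate predecessor $y$. First I would record that representatives of $y$ accumulate onto $x$: since $x$ is the unique maximal end we have $y\preceq x$, so every neighborhood $U_x$ of $x$ contains a homeomorphic image $f(U_y)$ of a neighborhood of $y$, whence $f(y)\in E(y)$ lies in $U_x$; as $f(y)\neq x$, this shows $x\in\overline{E(y)}\setminus E(y)$. In particular $E(y)\cap E(\Sigma_0)$ is infinite, because $E(\Sigma_0)$ is a neighborhood of the accumulation point $x$.

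The crux is to transport this infinitude out of $\Sigma_0$. Given $N\in\N$, I would choose distinct $z_1,\dots,z_N\in E(y)\cap E(\Sigma_0)$ (none equal to $x$) and a clopen neighborhood $U\subseteq\Sigma_0$ of $x$ whose end set avoids $z_1,\dots,z_N$. Applying item (2) to this $U$ gives $f_U$ with $f_U(E(\Sigma_0))\subseteq E(U)$; since the induced self-homeomorphism of $E(\Sigma)$ is surjective, passing to complements yields $f_U^{-1}\big(E(\Sigma)\setminus E(U)\big)\subseteq E(\Sigma)\setminus E(\Sigma_0)$. Each $z_i\in E(\Sigma_0)\setminus E(U)$, so $f_U^{-1}(z_1),\dots,f_U^{-1}(z_N)$ are $N$ distinct representatives of $y$ lying in $E(\Sigma)\setminus E(\Sigma_0)$. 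As $N$ was arbitrary, $E(y)\cap\big(E(\Sigma)\setminus E(\Sigma_0)\big)$ is infinite; because $E(\Sigma)\setminus E(\Sigma_0)$ is $E(\Sigma_1)\sqcup\cdots\sqcup E(\Sigma_n)$ together with the finitely many ends lying in $K$, some $E(\Sigma_j)$ contains infinitely many representatives of $y$.

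Finally I would upgrade ``infinite'' to ``uncountable'' via the dichotomy. The key point is that, although $y$ is not maximal in $\Sigma_0$ (it lies below $x\in E(\Sigma_0)$), it \emph{is} maximal in $E(\Sigma_j)$: any $z\in E(\Sigma_j)$ with $y\preceq z$ satisfies $z\prec x$ (as $x$ is the maximum and $x\notin E(\Sigma_j)$), so $y\preceq z\prec x$ forces $z\sim y$ because $y$ is an immediate predecessor; here the order on the clopen set $E(\Sigma_j)$ agrees with that on $E(\Sigma)$, exactly as in Remark~\ref{REMARK:W_xClopen}. Thus $y$ is a maximal end of the surface $\Sigma_j$ whose equivalence class $E(y)\cap E(\Sigma_j)$ is infinite, so by \cite[Proposition~4.7]{MR2019} (finite-or-Cantor) this class is a Cantor set, hence uncountable, and a fortiori $E(y)$ is uncountable. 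The main obstacle, and the place where local CB enters essentially, is precisely this transport step: the dichotomy cannot be applied inside $\Sigma_0$, where $y$ fails to be maximal, and it is item (2) that forces infinitely many copies of $y$ into a complementary piece $\Sigma_j$ in which $y$ does become maximal. I would flag that arguing only inside $W_x$ is insufficient, since $E(y)\cap W_x$ may genuinely be finite when $E(y)$ accumulates only at $x$.
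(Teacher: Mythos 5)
Your proof is correct, and while your first assertion follows the paper verbatim (Lemma \ref{LEMMA:ExistenceOfImmediatePredecessors} plus Proposition \ref{PROP:MaximalEndsW_x=Immediatepredecessor}), your treatment of the ``moreover'' part takes a genuinely different route. Both arguments rest on the same transport mechanism: if $f_U(\Sigma_0)\subseteq U$ then $f_U^{-1}\bigl(E(\Sigma)\setminus E(U)\bigr)\subseteq E(\Sigma)\setminus E(\Sigma_0)$, and $f_U^{-1}$ preserves $E(y)$ injectively. The paper runs this as a contradiction: assuming $E(y)$ is countable, $x$ is the unique accumulation point of $E(y)$, so $E(y)\cap E(\Sigma\setminus\Sigma_0)$ is finite, and a $U$ chosen so that $E(\Sigma\setminus U)$ meets $E(y)$ in strictly more points makes item (2) of Corollary \ref{COROLLARY:BigMapsLocCBFiniteGenusOneMaximalEnd} impossible. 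You run it positively: for every $N$ you push $N$ representatives of $y$ out of $\Sigma_0$, pigeonhole infinitely many into a single complementary piece $\Sigma_j$, show that $y$ is maximal in the clopen subset $E(\Sigma_j)$ (using that $y$ is an immediate predecessor of the maximum $x$, together with the clopen-locality of the preorder, exactly as in Remark \ref{REMARK:W_xClopen}), and then invoke the finite-or-Cantor dichotomy of \cite[Proposition 4.7]{MR2019} to upgrade ``infinite'' to ``uncountable''. Your route costs extra ingredients (the dichotomy, applied to the infinite-type bounded subsurface $\Sigma_j$, which is legitimate for the same reason the paper applies maximality inside $W_x$), but it buys a direct proof and a strictly stronger conclusion: a Cantor set of representatives of $y$ sits inside a single complementary component $\Sigma_j$, whereas the paper's contradiction yields only the uncountability of $E(y)$. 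Conversely, the paper's argument is shorter, but it quietly uses that a countable $E(y)$ can have no accumulation points within itself (a homogeneity/perfect-set observation it does not spell out) — a step your direct approach avoids entirely. Your closing caveat, that $E(y)\cap W_x$ may be finite so one cannot argue inside $W_x$ alone, is also correct and worth flagging.
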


\begin{proof}
The first part is consequence of Lemma \ref{LEMMA:ExistenceOfImmediatePredecessors} and  Proposition \ref{PROP:MaximalEndsW_x=Immediatepredecessor}.  

Now, as $\Map(\Sigma)$ is not globally CB, by Theorem \ref{TEO::ClassificationLocCBMAPsWithOneMaximalEnd}, $\Sigma$ has finite nonzero genus. Let $K$ be the finite-type subsurface of $\Sigma$ satisfying items 1) and 2) of Corollary \ref{COROLLARY:BigMapsLocCBFiniteGenusOneMaximalEnd}. The proof of the moreover part is done by contradiction. Let $y$ be an immediate predecessor of $x$ and suppose that $E(y)$ is countable. Since $x$ is the unique accumulation point of $E(y)$ then $E(\Sigma\setminus \Sigma_0)$ contains a finite number of elements of $E(y)$. Taking a neighborhood $U\subseteq \Sigma_0$ of $x$ such that $E(\Sigma\setminus U)$ contains more elements of $E(y)$ than $E(\Sigma\setminus \Sigma_0)$, we have that for such $U$ there does not exist a homeomorphism $f_U$ such that $f_U(\Sigma_0)\subseteq U$. This contradicts item 2) of Corollary \ref{COROLLARY:BigMapsLocCBFiniteGenusOneMaximalEnd}.      
\end{proof}


\medskip

\noindent \emph{Proof of Theorem \ref{THM:Example}}. Our example is inspired by the constructions carried out at \cite[Sections 2 \& 3]{MR2021two}. For each $n\in\mathbb{N}$, let $D_n$ be the surface of genus zero with one boundary component and such that: 1) $\Ends(D_n) = C_n\sqcup Q_n$ where $C_n$ is homeomorphic to a Cantor set and $Q_n$ is a countable set, 2) $\Ends(D_n)$ has Cantor-Bendixson rank $n$ and, 3) for each derived set $\Omega$ of $\Ends(D_n)$ that has isolated points, the accumulation set of the isolated points of $\Omega$ contains $C_n$; see Figure \ref{Fig:Pre-Ejemplo}. 
From the construction it follows that the ends in $C_n$ are not comparable with the ends in $C_m$ if $n\neq m$. 

\begin{figure}[!ht]
\begin{center}
	\includegraphics[width=.5\textwidth]{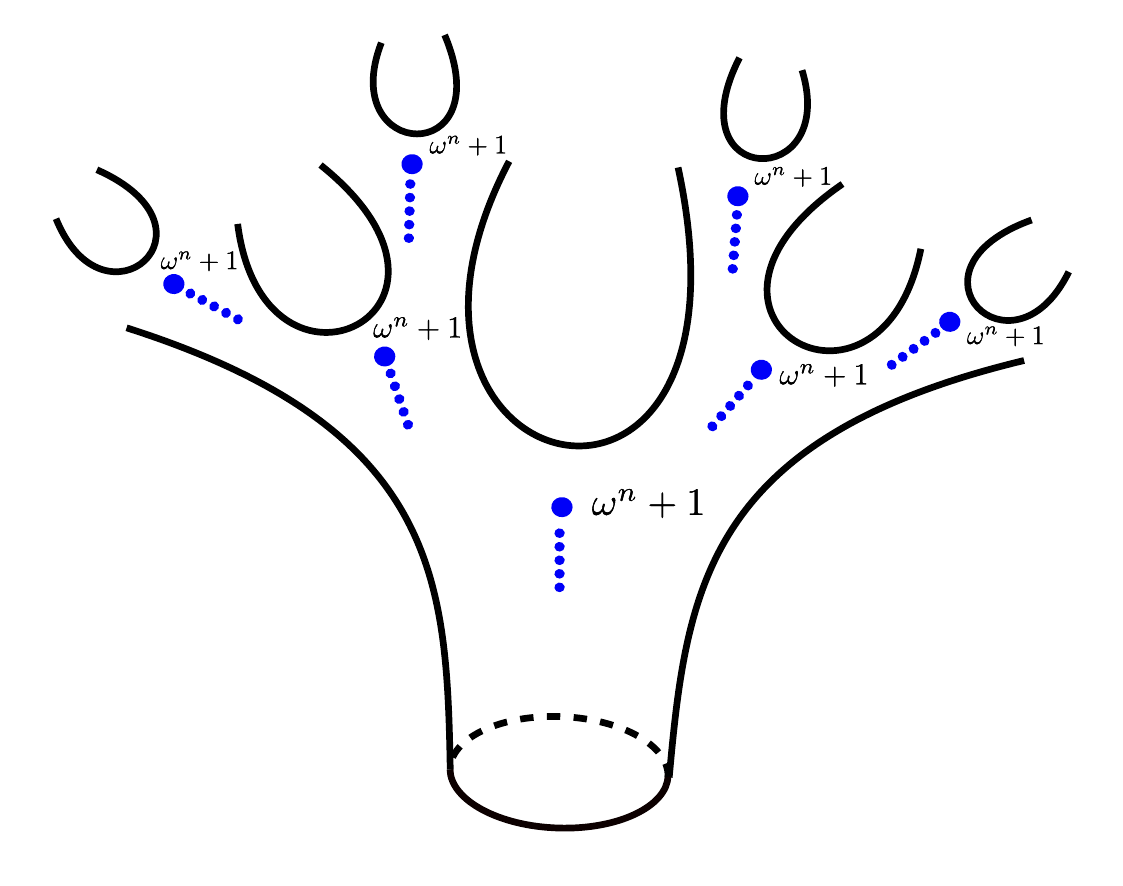}
	\caption{Surface $D_{n+1}$ with one boundary component and whose space of ends has Cantor-Bendixson rank $n+1$ with $(n+1)$th-derived set homeomorphic to a Cantor set $C_{n+1}$ and, each point of $C_{n+1}$ is accumulated by points homeomorphic to $\omega^{n}+1$.}
	\label{Fig:Pre-Ejemplo}
\end{center}
\end{figure}

Let $T$ be the surface obtained from  the Cantor tree surface by placing $2^n$ copies of $D_n$ at each $n$th-level of the tree, one for each bifurcation. Let $C$ denote the set of ends of $T$ coming from the ends of the Cantor tree surface. Observe that each end $x$ in $C$ does not have stable neighborhoods. Indeed, take $U$ a neighborhood of $x$ and let $n_U$ be the smallest natural number such that $C_{n_U}\cap \Ends(U) \neq \emptyset$. By construction, we can find a neighborhood $V\subseteq U$ of $x$ with $\Ends(V)$ not containing points of $C_{n_U}$. Given that the ends in $C_n$ are not comparable with the ends in $C_m$ if $n\neq m$, $V$ cannot contain homeomorphic copies of $U$. 

Let $g$ be a nonzero natural number and $F_g$ be the surface of genus $g$ and space of ends homemorphic to $\omega + 1$. Finally, we define $\Sigma$ to be the surface obtained from $F_g$ by replacing a neighborhood of each of its isolated ends with a copy of the surface $T$; see Figure \ref{Fig:Ejemplo}. 

Note that the space of ends of $\Sigma$ is self-similar, $\Sigma$ has a unique maximal end and therefore the unique maximal end of $\Sigma$ has stable neighborhoods. Moreover, each maximal end of $T$ is an immediate predecessor end of the unique maximal end of $\Sigma$ and, more importantly, some of them have no stable neighborhoods  as we explained before. Therefore the surface $\Sigma$ is not tame. Since $\Sigma$ has finite nonzero genus, then $\Map(\Sigma)$ is not globally CB. Let $K$ be the compact subsurface of $\Sigma$ with two boundary components, of genus $g$ and complementary components $\Sigma_0$ and $\Sigma_1$, where $\Sigma_1$ is exactly the first copy $T$ and $\Sigma_0$ contains the remaining copies of $T$ in the construction of $\Sigma$. The reader can verify that $K$ is the desired subsurface in Theorem \ref{TEO:BigMapsLocallyCB1maxEnd} for which $\mathcal{V}_K$ defines a CB-neighborhood of the identity. So, $\Map(\Sigma)$ is locally CB and therefore it is CB generated by Theorem \ref{THM:CBGenerated}. \qed

\begin{figure}[!ht]
\begin{center}
	\includegraphics[width=.9\textwidth]{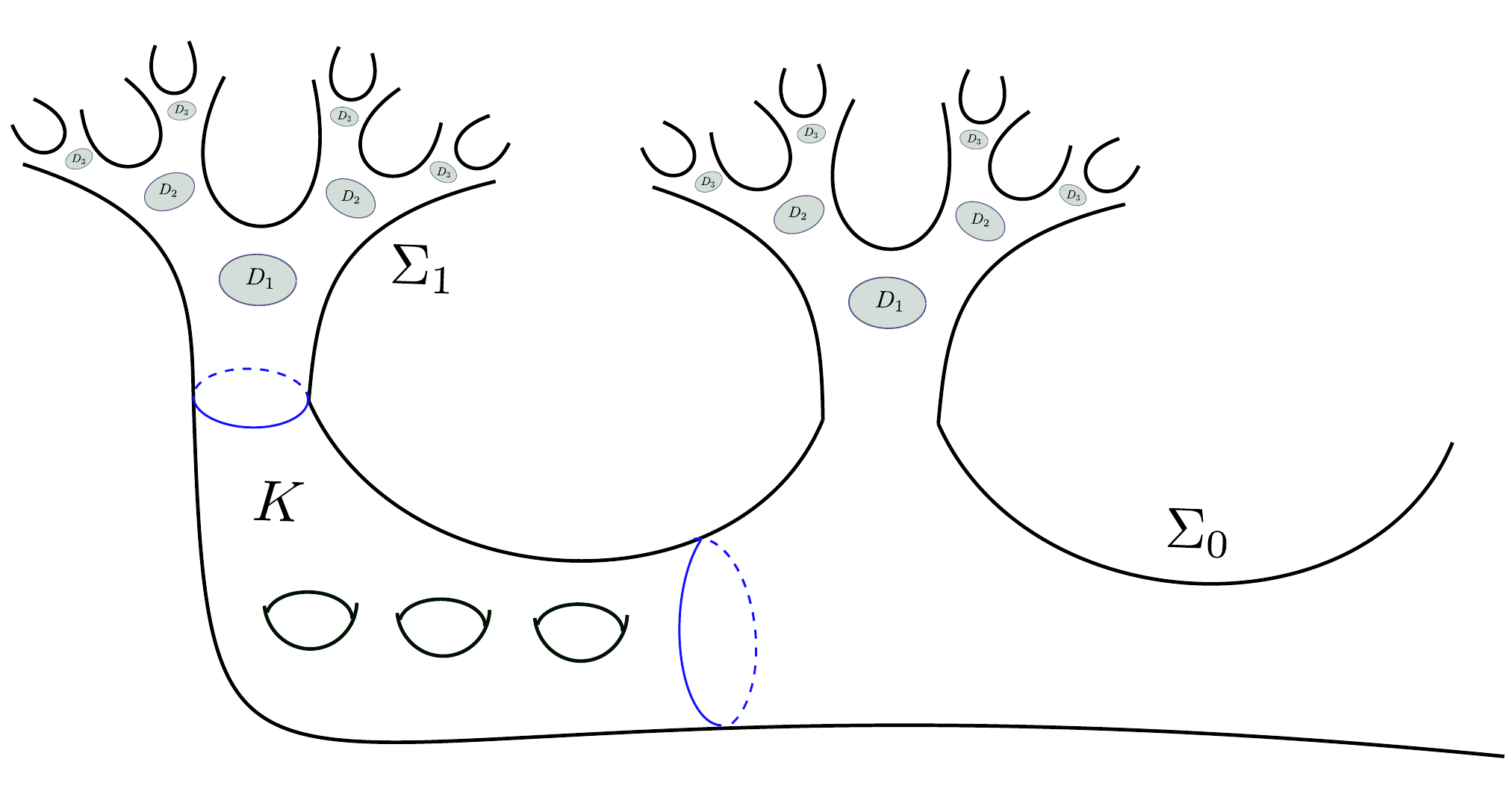}
	\caption{\small Non-tame surface with a unique maximal end and whose mapping class group is CB generated but not globally CB.}
	\label{Fig:Ejemplo}
\end{center}
\end{figure}

\bibliography{biblio}

@article{MR2021two,
author = {Kathryn Mann and Kasra Rafi},
title = {{Two Results on End Spaces of Infinite Type Surfaces}},
journal = {Michigan Mathematical Journal},
publisher = {University of Michigan, Department of Mathematics},
pages = {1 -- 8},
keywords = {57K20, 57M07},
year = {2024},
doi = {10.1307/mmj/20226208},
URL = {https://doi.org/10.1307/mmj/20226208}
}

@article{Hill2023,
  title={Large-scale geometry of pure mapping class groups of infinite-type surfaces},
  author={Hill, Thomas},
  journal={arXiv preprint 	arXiv:2309.00124},
  year={2023}
}

@article{SC22,
  title={Graphs of curves and arcs quasi-isometric to big mapping class groups},
  author={Schaffer-Cohen, Anschel},
  journal={arXiv preprint 	arXiv:2006.14760, to appear in
 Groups, Geometry, and Dynamics},
  year={2022}
}

@article {FGM21,
    AUTHOR = {Fanoni, Federica and Ghaswala, Tyrone and McLeay, Alan},
     TITLE = {Homeomorphic subsurfaces and the omnipresent arcs},
   JOURNAL = {Ann. H. Lebesgue},
  FJOURNAL = {Annales Henri Lebesgue},
    VOLUME = {4},
      YEAR = {2021},
     PAGES = {1565--1593},
      ISSN = {2644-9463},
   MRCLASS = {57K20 (20F65)},
  MRNUMBER = {4353971},
MRREVIEWER = {Thomas\ Koberda},
       DOI = {10.5802/ahl.110},
       URL = {https://doi.org/10.5802/ahl.110},
}

@article{MR2019,
  title={Large-scale geometry of big mapping class groups},
  author={Mann, Kathryn and Rafi, Kasra},
  journal={Geometry \& Topology},
  volume={27},
  number={6},
  pages={2237--2296},
  year={2023},
  publisher={Mathematical Sciences Publishers}
}

@article{Mann2020AC,
  title={{A}utomatic continuity for homeomorphism groups and big mapping class groups},
  author={Mann, Kathryn},
  journal={Michigan Mathematical Journal},
  volume={1},
  number={1},
  pages={1--10},
  year={2023},
  publisher={University of Michigan, Department of Mathematics}
}

@article{MalestinTao2021,
author = {Justin Malestein and Jing Tao},
title = {{S}elf-similar surfaces: involutions and perfection},
journal = {Michigan Mathematical Journal},
publisher = {University of Michigan, Department of Mathematics},
pages = {Advance Publication 1 -- 24},
keywords = {20F65, 57K20},
year = {2023},
doi = {10.1307/mmj/20216114},
URL = {https://doi.org/10.1307/mmj/20216114}
}

@article{OQR2022,
  title={Big mapping class groups with hyperbolic actions: classification and applications},
  author={Horbez, Camille and Qing, Yulan and Rafi, Kasra},
  journal={Journal of the Institute of Mathematics of Jussieu},
  volume={21},
  number={6},
  pages={2173--2204},
  year={2022},
  publisher={Cambridge University Press}
}

@article{HHetAl2022,
author = {Hernández Hernández, Jesús and Hrušák, Michael and Morales, Israel and Randecker, Anja and Sedano, Manuel and Valdez, Ferrán},
title = {Conjugacy classes of big mapping class groups},
journal = {Journal of the London Mathematical Society},
volume = {106},
number = {2},
pages = {1131-1169},
doi = {https://doi.org/10.1112/jlms.12594},
}

@article{LanierVlamis2022,
  title={Mapping class groups with the {R}okhlin property},
  author={Lanier, Justin and Vlamis, Nicholas G},
  journal={Mathematische Zeitschrift},
  volume={302},
  number={3},
  pages={1343--1366},
  year={2022},
  publisher={Springer}
}

@incollection {AV-Overview,
    AUTHOR = {Aramayona, Javier and Vlamis, Nicholas G.},
     TITLE = {Big mapping class groups: an overview},
 BOOKTITLE = {In the tradition of {T}hurston---geometry and topology},
     PAGES = {459--496},
 PUBLISHER = {Springer, Cham},
      YEAR = {2020},      
}

@article{Richards1963,
  title={On the classification of noncompact surfaces},
  author={Richards, Ian},
  journal={Transactions of the American Mathematical Society},
  volume={106},
  number={2},
  pages={259--269},
  year={1963},
  publisher={JSTOR}
}

@book{BeckerKechris1996,
  title={The descriptive set theory of {P}olish group actions},
  author={Becker, Howard and Kechris, Alexander S},
  volume={232},
  year={1996},
  publisher={Cambridge University Press}
}

@book{AhlforsSario2015,
  title={Riemann surfaces:(PMS-26)},
  author={Ahlfors, Lars Valerian and Sario, Leo},
  volume={58},
  year={2015},
  publisher={Princeton University Press}
}

@book{Rosendal2021,
  title={Coarse geometry of topological groups},
  author={Rosendal, Christian},
  volume={223},
  year={2021},
  publisher={Cambridge University Press}
}

@book {FM12,
    AUTHOR = {Farb, Benson and Margalit, Dan},
     TITLE = {A primer on mapping class groups},
    SERIES = {Princeton Mathematical Series},
    VOLUME = {49},
 PUBLISHER = {Princeton University Press, Princeton, NJ},
      YEAR = {2012},
     PAGES = {xiv+472},
      ISBN = {978-0-691-14794-9},
   MRCLASS = {57M50 (20F36 20F65 57M07 57N05)},
  MRNUMBER = {2850125},
MRREVIEWER = {Stephen P. Humphries},
}

\end{document}